\documentclass[12pta4paper]{amsart}
\usepackage{amsfonts,amssymb,amsthm,amsmath,hyperref, color}
\newtheorem{theorem}{Theorem}[section]
\newtheorem{corollary}[theorem]{Corollary}
\newtheorem{question}[theorem]{Question}

\newtheorem{lemma}[theorem]{Lemma}
\newtheorem{proposition}[theorem]{Proposition}

\newtheorem{definition}[theorem]{Definition}

%%%%%%%%%%%%%%%% commands by LZ %%%%%%%%%%%%%

\newcommand{\IR}{\mathbb R}

\newcommand{\U}{\mathcal U}
\newcommand{\w}{\omega}

\newcommand{\Op}{\mathcal{O}\/}
\newcommand{\B}{\mathcal{B}}
\newcommand{\C}{\mathcal{C}}

\newcommand{\K}{\mathcal{K}}
\newcommand{\A}{\mathcal{A}}
\newcommand{\D}{\mathcal{D}}

\newcommand{\CS}{\mathcal{S}}

\newcommand{\J}{\mathcal{J}}
\newcommand{\M}{\mathcal{M}}
\newcommand{\NN}{\mathcal{N}}

\newcommand{\F}{\mathcal{F}}

\newcommand{\uhr}{\upharpoonright}

\newcommand{\la}{\langle}
\newcommand{\ra}{\rangle}

\newcommand{\hot}{\mathfrak}
\newcommand{\W}{\mathcal W}

\newcommand{\nothing}[1]{}

\newcommand{\Rnw}{R-$\mathit{nw}$-selective}

\newcommand{\wgp}{\w\mbox{-}\mathit{gp}}

%%%%%%%%%%%%%%%%%%%%%%%%%%%%%%%%%%%%%%%%%
%\usepackage{showkeys}

%%%%%%%%%%%%%%%%%%%%%%%%%%%%%%%%%%%%%%%%%%%%

\title{On some recent selective properties involving networks}
\author{Maddalena Bonanzinga$^1$, Davide Giacopello$^2$, Santi Spadaro$^3$, and Lyubomyr Zdomskyy$^4$}
%\author{Maddalena Bonanzinga\footnote{MIFT Department, University of Messina, e-mail: mbonanzinga@unime.it} and Davide Giacopello\footnote{MIFT Department, University of Messina, e-mail: dagiacopello@unime.it}}%\author{Maddalena Bonanzinga \footnote{University of Messina, Italy\\
%Email address: mbonanzinga@unime.it}}
%\author[1]{Maddalena Bonanzinga}
%\author[2]{Davide Giacopello}
%\author[3]{Santi Spadaro}
%\author[4]{Lyubomyr Zdomskyy}

%\affil[1]{University of Messina, Italy
%Email address: mbonanzinga@unime.it}
%\affil[2]{University of Messina, Italy
%Email address: dagiacopello@unime.it}
%\affil[3]{University of Palermo, Italy}
%\affil[4]{Institut f\"ur Diskrete Mathematik und Geometrie, Technische Universit\"at Wien, 
%Wiedner Hauptstrasse 8-10/104, 1040 Wien, Austria.\\
%Email address: lzdomsky@gmail.com\\
%URL: https://dmg.tuwien.ac.at/zdomskyy/}
\date{}

\begin{document}

 \maketitle

\begin{abstract}
	In this paper we investigate R-,H-, and M-{\it nw}-selective properties introduced in \cite{BG}. In particular, we provide consistent uncountable examples of such spaces and we define \textit{trivial} R-,H-, and M-{\it nw}-selective spaces the ones with countable net weight having, additionally, the cardinality and the weight strictly less then $cov(\M)$, $\frak b$, and $\frak d$, respectively. Since we establish that spaces having cardinalities more than $cov(\M)$, $\frak b$, and $\frak d$, fail to have the R-,H-, and M-{\it nw}-selective properties, respectively, non-trivial examples should eventually have weight greater than or equal to these small cardinals.	Using forcing methods, we construct consistent countable non-trivial examples of R-{\it nw}-selective and H-{\it nw}-selective spaces and we establish some limitations to constructions of non-trivial examples. Moreover, we consistently prove the existence of two H-{\it nw}-selective spaces whose product fails to be M-{\it nw}-selective. Finally, we study some relations between {\it nw}-selective properties and a strong version of the HFD property.
\end{abstract}

{\bf Keywords:} Countable net weight; M-separable space, H-separable space, R-separable space,
Menger space,
Hurewicz space,
Rothberger space.

{\bf AMS Subject Classification:}  54D65, 54A25, 54A20.

%%%%%%%%%%%%%%%%%%%%%%%%%%%%%%%%
\section{Introduction}
%%%%%%%%%%%%%%%%%%%%%%%%%%%%%%%
\setcounter{footnote}{0}
\renewcommand{\thefootnote}{}
\footnotetext{$^1$MIFT Department, University of Messina, Italy; Email address: mbonanzinga@unime.it}
\footnotetext{$^2$MIFT Department, University of Messina, Italy; Email address: dagiacopello@unime.it}
\footnotetext{$^3$University of Palermo, Italy; Email address: santidomenico.spadaro@unipa.it}
\footnotetext{$^4$Institut f\"ur Diskrete Mathematik und Geometrie, Technische Universit\"at Wien, Wiedner Hauptstrasse 8-10/104, 1040 Wien, Austria; Email address: lzdomsky@gmail.com; URL: \href{https://dmg.tuwien.ac.at/zdomskyy/}{personal website}.}

\renewcommand{\thefootnote}{\arabic{footnote}}

	%\affil[2]{University of Messina, Italy
	%Email address: dagiacopello@unime.it}
	%\affil[3]{University of Palermo, Italy}
	%\affil[4]{Institut f\"ur Diskrete Mathematik und Geometrie, Technische Universit\"at Wien, 
	%Wiedner Hauptstrasse 8-10/104, 1040 Wien, Austria.\\
	%Email address: lzdomsky@gmail.com\\
	%URL: https://dmg.tuwien.ac.at/zdomskyy/}}

The systematic study of covering properties began with Scheepers \cite{Sch1}. Later, Scheepers himself and many other mathematicians (for instance see \cite{ BBM, BBMT, BMS, BCPT, Sch1, Sch, Sch2, Sch3, SchTsa02}) used these methods to describe other properties involving some other topological objects, not just collections of coverings of some type.

This type of new approach has led to catalog these properties within so-called \lq\lq selection principles\rq\rq. In particular, given two collections $\A$ and $\B$ of some topological objects on a  space $X$, Scheepers introduce this notation:
\begin{itemize}
	\item[$S_1(\A,\B)$]: For every sequence $(\U_n: n\in\omega)$ of elements of $\A$ there exists $U_n\in \U_n$, $n\in\omega$, such that $\{U_n:n\in\omega\}$ belongs to $\B$.
	\item[$S_{fin}(\A,\B)$]: For every sequence $(\U_n: n\in\omega)$ of elements of $\A$ there exists a finite subset $\F_n\in \U_n$, $n\in\omega$, such that $\bigcup_{n\in\omega} \F_n$ belongs to $\B$.
	\item[$U_{fin}(\A,\B)$]: For every sequence $(\U_n: n\in\omega)$ of elements of $\A$ there exists a finite subset $\F_n\subseteq \U_n$, $n\in\omega$, such that $\{\bigcup\F_n : n\in\omega\}$ belongs to $\B$.
\end{itemize}

Recall that a $\gamma$-cover of a space $X$ is a particular cover such that each point of $X$ belongs to all but finitely many members of the cover. If one denotes by $\Op$ and $\Gamma$ the family of all open covers and the family of all $\gamma$-covers of a space $X$, respectively, it follows that the \emph{Rothberger} property can be expressed by $S_1(\Op,\Op)$, the  \emph{Menger} property by $S_{fin}(\Op,\Op)$ and the  \emph{Hurewicz} property by $U_{fin}(\Op,\Gamma)$.
Menger \cite{H}, Rothberger \cite{R}, and Hurewicz \cite{H1} properties are classical topological properties that were among the first to be introduced as fundamental selection principles. These properties play a crucial role in understanding the combinatorial aspects of covering properties in topological spaces.

Inspired by the previous selective variation of Lindel\"ofness, many mathematicians introduced and studied some selection principles that are strengthening of separability (see for instance \cite{BBMT, BBM, Sch2}).

We denote by $\D$ the family of all dense subsets of a space $X$ and by $\D_\Gamma$ the family of all collections of subsets $\F$ such that for every nonempty open set $O\subset X$, the intersection $O\cap F$ is nonempty for all but finitely many $F\in \F$. A space $X$ is called  \emph{R-separable} if it satisfies $S_1(\D,\D)$,  \emph{M-separable} a space sastisfying $S_{fin}(\D,\D)$ and  \emph{H-separable} a space satisfying $U_{fin}(\D,\D_\Gamma)$. The \lq\lq M-\rq\rq, \lq\lq R-\rq\rq, and \lq\lq H-\rq\rq are motived by analogy with Menger, Rothberger, and Hurewicz properties, respectively.
\smallskip

Recall that for $f,g\in \omega^\omega$, $f\leq^*g$ means that $f(n)\leq g(n)$ for all but finitely many $n$ (and $f\leq g$ means that $f(n)\leq g(n)$ for all $n\in\omega$). A subset $B\subseteq \omega^\omega$ is \emph{bounded} if there is $g\in\omega^\omega$ such that $f\leq^*g$ for every $f\in B$. $D\subseteq \omega^\omega$ is \emph{dominating} if for each $g\in \omega^\omega$ there is $f\in D$ such that $g\leq^*f$. The minimal cardinality of an unbounded subset of $\omega^\omega$ is denoted by ${\hot b}$, and the minimal cardinality of a dominating subset of $\omega^\omega$ is denoted by ${\hot d}$. The value of ${\hot d}$ does not change if one considers the relation $\leq$ instead of $\leq^*$ \cite[Theorem 3.6]{vD}. $\mathcal M$ denotes the family of all meager subsets of $\mathbb R$. $cov(\mathcal M)$ is the minimum of the cardinalities of subfamilies ${\mathcal U}\subseteq {\mathcal M}$ such that $\bigcup{\mathcal U}=\IR$. However, another description of the cardinal $cov(\mathcal M)$ is given by the following.
\begin{theorem}{\rm(\cite{B} and \cite[Theorem 2.4.1]{BJ})} \label{thm:cov-char} %\rm
	$cov(\mathcal M)$ is the minimum cardinality of a family $F\subset \omega^\omega$ such that for every $g\in \omega^\omega$ there is $f\in F$ such that $f(n)\not =g(n)$ for all but finitely many $n\in \omega$.
\end{theorem}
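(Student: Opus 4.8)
The plan is to prove that $\cov(\M)$ coincides with $\mathfrak e$, the least cardinality of a family $F\sbst\omega^\omega$ such that every $g\in\omega^\omega$ is \emph{eventually different} from some $f\in F$ (i.e.\ $f(n)\neq g(n)$ for all but finitely many $n$). First I would move from $\IR$ to $\omega^\omega$: since $\omega^\omega$ is homeomorphic to the irrationals, which form a comeager (dense $G_\delta$) subset of $\IR$, a subset of $\omega^\omega$ is meager in $\omega^\omega$ iff it is meager in $\IR$; a meager cover of $\IR$ restricts to one of $\omega^\omega$, and a meager cover of $\omega^\omega$ together with the meager set $\IQ$ covers $\IR$. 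Hence $\cov(\M)$ is the same whether computed in $\IR$ or in $\omega^\omega$, and it suffices to prove the two inequalities $\cov(\M)\le\mathfrak e$ and $\mathfrak e\le\cov(\M)$ with $\cov(\M)$ taken in $\omega^\omega$.

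The inequality $\cov(\M)\le\mathfrak e$ is the easy one. Given a family $F$ witnessing $\mathfrak e$, for each $f\in\omega^\omega$ put $E_f=\{g\in\omega^\omega:g(n)\neq f(n)\text{ for all but finitely many }n\}=\bigcup_{k\in\omega}\bigcap_{n\ge k}\{g:g(n)\neq f(n)\}$. Each set $\bigcap_{n\ge k}\{g:g(n)\neq f(n)\}$ is closed and contains no basic clopen set — any $[s]$ has an extension agreeing with $f$ at some coordinate $\ge k$ — hence is nowhere dense, so each $E_f$ is $F_\sigma$ and meager. By the defining property of $F$ every $g\in\omega^\omega$ lies in some $E_f$ with $f\in F$, so $\{E_f:f\in F\}$ is a meager cover of $\omega^\omega$ and $\cov(\M)\le|F|$; taking the infimum over witnessing $F$ gives $\cov(\M)\le\mathfrak e$. (Dually, if $|F|<\cov(\M)$ then the comeager sets $\{g:g(n)=f(n)$ for infinitely many $n\}$, $f\in F$, have nonempty intersection, yielding a $g$ that no member of $F$ is eventually different from, so $F$ cannot witness $\mathfrak e$.)

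For the reverse inequality $\mathfrak e\le\cov(\M)$ I must manufacture, from a meager cover of $\omega^\omega$ of size $\cov(\M)$, an eventually-different family of the same size. Fix $\omega^\omega=\bigcup_{\alpha<\cov(\M)}M_\alpha$ with each $M_\alpha$ meager, and (passing to $F_\sigma$ meager hulls) assume each $M_\alpha$ is $F_\sigma$. The engine is the combinatorial characterisation of meager subsets of $\omega^\omega$ (the same source, \cite{BJ}, cited for this theorem): every meager $M\sbst\omega^\omega$ is contained in $\{x: x\uhr I_k\neq y\uhr I_k\text{ for all but finitely many }k\}$ for some partition of $\omega$ into consecutive finite intervals $\langle I_k\rangle$ and some $y\in\omega^\omega$, and one is free to coarsen $\langle I_k\rangle$ by amalgamating consecutive intervals. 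Applying this to each $M_\alpha$ gives $\langle I^\alpha_k\rangle$ and $y_\alpha$ with $M_\alpha\sbst\{x:x\uhr I^\alpha_k\neq y_\alpha\uhr I^\alpha_k$ for all but finitely many $k\}$. The remaining — and crucial — task is to convert this \emph{blockwise} eventual-difference data into a genuine \emph{pointwise} eventually-different family. For a fixed $\alpha$ the map $x\mapsto\langle x\uhr I^\alpha_k:k\in\omega\rangle$ identifies $\omega^\omega$ with $\prod_k\omega^{I^\alpha_k}\cong\omega^\omega$ and turns ``$x\uhr I^\alpha_k\neq y_\alpha\uhr I^\alpha_k$ for all but finitely many $k$'' into ordinary pointwise eventual difference in the new coordinates; within each finite block one can moreover replace the single forbidden pattern $y_\alpha\uhr I^\alpha_k$ by finitely many coordinatewise avoidance patterns. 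Stitching these local devices into one global family $F\sbst\omega^\omega$ of size $\le\cov(\M)$, so that every $g\in\bigcup_\alpha M_\alpha=\omega^\omega$ is eventually different from a member of $F$, is where the real work lies: the recodings depend on $\alpha$ and the partitions $\langle I^\alpha_k\rangle$ attached to different pieces need not admit a common refinement, so they cannot be performed uniformly. I expect this assembly step to be the main obstacle; everything else is routine.
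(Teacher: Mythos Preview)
The paper does not prove this theorem at all: it is simply quoted from Bartoszy\'nski \cite{B} and Bartoszy\'nski--Judah \cite[Theorem~2.4.1]{BJ} and used as a black box. So there is no ``paper's own proof'' to compare your attempt against.

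Judging your attempt on its own merits: the reduction from $\IR$ to $\w^\w$ and the inequality $\cov(\M)\le\mathfrak e$ are correct and cleanly explained. The reverse inequality $\mathfrak e\le\cov(\M)$, however, is not proved --- your last paragraph is an outline that stops exactly at the genuine difficulty, and you say so yourself. The obstacle you flag is real: a single meager set $B(I,y)=\{x:\forall^\infty k\ x\uhr I_k\neq y\uhr I_k\}$ \emph{cannot} in general be covered by countably many sets of the form $E_f$, so the strategy ``replace each meager piece by countably many $E_f$'s and take the union'' does not go through, and there is no way to pass to a common refinement of the partitions $\langle I^\alpha_k\rangle$.

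The standard completion (this is what \cite{B} and \cite[\S2.4]{BJ} do) avoids stitching partitions altogether by building a Galois--Tukey reduction: one fixes, once and for all, a coding of pairs (interval, finite sequence) by natural numbers, and defines a single map $\phi:\w^\w\to\w^\w$ and for each meager $M$ a single function $\psi(M)\in\w^\w$ so that $\phi(g)\in M$ implies $g\neq^*\psi(M)$. Concretely, $\psi(M)(n)$ encodes the next interval of the chosen partition $I^M$ after $n$ together with $y^M$ restricted to it, while $\phi(g)$ is decoded from $g$ so that whenever $g(n)=\psi(M)(n)$ the function $\phi(g)$ copies the pattern $y^M$ on that next interval; thus if $g(n)=\psi(M)(n)$ infinitely often then $\phi(g)\notin B(I^M,y^M)\supseteq M$. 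From a meager cover $\{M_\alpha:\alpha<\cov(\M)\}$ one then gets the eventually-different family $\{\psi(M_\alpha):\alpha<\cov(\M)\}$ directly, since for any $g$ the point $\phi(g)$ lies in some $M_\alpha$. This coding trick is the missing idea in your sketch; filling it in is exactly the content of Bartoszy\'nski's original argument.
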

Thus if $F\subset \omega^\omega$ and $|F|<cov(\mathcal M)$, then there is $g\in \omega^\omega$ such that for every $f\in F$, $f(n) = g(n)$ for infinitely many $n\in\omega$; it is often said that $g$ \textit{guesses} $F$.
For a topological property $\mathcal K$, non($\mathcal K$) denotes the minimum cardinality of a subspace of $\mathbb R$ that does not have property $\mathcal K$. It is well known that non(Menger) $= \hot d$, non(Hurewicz) $= \hot b$, non(Rothberger) $= cov({\mathcal M})$ (see \cite{MF, JMS}). Additionally every Lindel\"{o}f space of cardinality strictly less than $\hot d$ ($\hot b$ or $cov(\mathcal M)$) is  Menger (Hurewicz or Rothberger respectively).

\smallskip
A family $\mathcal P$ of open sets is called a $\pi$-base for $X$ if every nonempty open set in $X$ contains a nonempty element of $\mathcal P$; $\pi w(X)=min\{|{\mathcal P}|: {\mathcal P}$ is a $\pi$-base for $X\}$ is the $\pi$-weight of $X$. Scheepers \cite{Sch2} showed that every space with countable $\pi$-weight is R-separable (hence M-separable). Actually, in \cite{Sch2} it was proved that having countable $\pi$-weight is equivalent to a stronger property defined in terms of topological games. Also in \cite{BBMT} it is observed that every space with countable $\pi$-weight is H-separable.
Let $\delta(X) = sup\{d(Y ): Y$ is dense in $X\}$ \cite{WS}; $\delta(X) = \omega$ for every M-separable space X.
If $\delta(X) = \omega$  and 
$\pi w(X) < {\frak d}$, then $X$ is M-separable (a stronger version of this fact is estabilished in \cite[Theorem 40]{Sch2}); moreover,
if $\delta(X) = \omega$  and 
$\pi w(X) <$ cov$(\M)$, then $X$ is R-separable (a stronger version of this fact is estabilished in \cite[Theorem 29]{Sch2}); in \cite[Theorem 29]{BBM} it is also shown that 
if $\delta(X) = \omega$  and 
$\pi w(X) < {\frak b}$, then $X$ is H-separable. As a consequence of these results, it is shown that the existence of a countable M-separable space which is not H-separable is consistent with ZFC \cite{BBM}.

The following implications are obvious.

\begin{picture}(300,180)
\put(0,85){\textsf{{countable $\pi$-weight}}} 
\put(200,85){\textsf{{M-separable}}} 
\put(110,100){\vector(1,1){25}}
\put(110,150){\textsf{{R-separable}}}
\put(110,30){\textsf{{H-separable}}}
\put(110,70){\vector(1,-1){25}} 
\put(160,130){\vector(1,-1){25}}
\put(160,50){\vector(1,1){25}}
\put(270,85){\vector(1,0){25}}
\put(310,85){\textsf{{separable}}} 
\end{picture}

For compact spaces, M-, R- and H- separability are equivalent to each other and to having a countable $\pi$-base (see \cite{BBMT}).

%%%%%%%%%%%%%%%%%%%%%%%%%%%%%%%
%Recall that a space $X$ is Menger if for every sequence $({\mathcal U}_n : n\in\omega)$ of open covers of $X$ one can select finite ${\mathcal F}_n\subset {\mathcal U}_n$, $n\in\omega$, such that $\bigcup_{n\in\omega} {\mathcal F}_n$ covers $X$; $X$ is Rothberger if for every sequence $({\mathcal U}_n : n\in\omega)$ of open covers of $X$ one can select  $F_n\in {\mathcal U}_n$, $n\in\omega$, such that $\{F_n : n\in\omega\}$ covers $X$; $X$ is Hurewicz if for every sequence $({\mathcal U}_n : n\in\omega)$ of open covers of $X$ one can select finite ${\mathcal F}_n\subset {\mathcal U}_n$, $n\in\omega$, such that for every $x\in X$, $x\in \bigcup {\mathcal F}_n$ for all but finitely many $n$.\\
%The previous definitions motivated the introduction in \cite{Sch2} and \cite{BBM} of the following similar selection principles that are strengthening of separability. A space $X$ is M-separable (in \cite{BBMT} it is called selective separable) if for every sequence $(D_n : n\in\omega)$ of dense subsets of $X$ one can pick finite $F_n\subset D_n$, $n\in\omega$, such that $\bigcup_{n\in\omega} F_n$ is dense in $X$;  $X$ is R-separable if for every sequence $(D_n : n\in\omega)$ of dense subsets of $X$ one can pick $p_n\in D_n$, $n\in\omega$, such that $(p_n: n\in\omega)$ is dense in $X$; $X$ is H-separable if for every sequence $(D_n : n\in\omega)$ of dense subsets of $X$ one can pick finite $F_n\subset D_n$, $n\in\omega$, such that for every nonempty open set $O\subset X$, the intersection $O\cap F_n$ is nonempty for all but finitely many $n$.\\
Recall the following properties which can also be presented using the notation introduced by Scheepers.

A space $X$ has  \emph{countable (strong) fan tightness}, (see \cite{Arh2} and \cite{S0}), if for every point $x\in X$ and every sequence $(A_n:n\in\omega)$ of subspaces of $X$ such that $x\in\overline{A_n}$ for all $n\in\omega$, one can choose finite $F_n\subset A_n$ (resp., a point $x_n\in A_n$) so that $x\in\overline{\bigcup\{F_n: n\in\omega\}}$ (resp., $x\in\overline{\{x_n: n\in\omega\}}$).
It is natural to say that $X$ has  \emph{countable (strong) fan tightness with respect to dense subspaces} if this previous statement is true when the $A_n$'s are dense in $X$. %, that is for every $x\in X$ and every sequence $(A_n:n\in\omega)$ of dense subspaces of $X$ one can choose finite $F_n\subset A_n$ so that $x\in\overline{\bigcup\{F_n: n\in\omega\}}$.
A space $X$ is  \emph{weakly Fr\'echet in the strict sense} if for every point $x\in X$ and every sequence $(A_n:n\in\omega)$ of subspaces of $X$ such that $x\in\overline{A_n}$ for all $n\in\omega$, there are finite $F_n\subset A_n$ such that every neighborhood of $x$ intersects all but finitely many $F_n$ \cite{S1}. Again, the space $X$ is  \emph{weakly Fr\'echet in the strict sence with respect to dense subspaces} if this previous statement is true when the $A_n$'s are dense in $X$ \cite{BBM}. %, that is for every sequence $(D_n:n\in\omega)$ of dense subspaces of $X$ and every $x\in X$ there are finite $F_n\subset D_n$ such that every neighborhood of $x$ intersects all but finitely many $F_n$ .

Any separable space having countable (strong) fan tightness with respect to dense subsets is M-separable (R-separable, respectively); moreover, any separable weak Frechet in the strict sense with respect to dense subsets space is H-separable (see \cite{BBM} and \cite{BBMT}).

%Let $\delta(X) = sup\{d(Y ): Y$ is dense in $X\}$ \cite{WS}; $\delta(X) = \omega$ for every M-separable space X.
%If $\delta(X) = \omega$  and $\pi w(X) < {\hot d}$, then $X$ is M-separable (a stronger version of this fact is estabilished in \cite[Theorem 40]{Sch2}); moreover, if $\delta(X) = \omega$  and $\pi w(X) <$ cov$({\mathcal M})$, then $X$ is R-separable (a stronger version of this fact is estabilished in \cite[Theorem 29]{Sch2}); in \cite[Theorem 29]{BBM} it is also shown that if $\delta(X) = \omega$  and $\pi w(X) < {\hot b}$, then $X$ is H-separable.\\ 
%As a consequence of these results, in \cite{BBM}, the authors showed, consistently with ZFC, the existence of a countable M-separable space which is not H-separable.\\  
%Every space having a countable $\pi$-base is R-separable and H-separable, then M-separable. However,  not every space with countable net weight is M-separable. 
A family $\mathcal N$ of sets is called a network for $X$ if for every $x\in X$ and for every open neighbourhood $U$ of $x$ there exists an element $N$ of $\mathcal N$ such that $x\in N\subseteq U$; $nw(X)=min\{|{\mathcal N}|: {\mathcal N}$ is a network for $X\}$ is the net weight of $X$.

Bonanzinga and Giacopello \cite{BG} asked which additional conditions a space with countable netweight must satisfy in order to be M-separable (either R-separable or H-separable). They introduced and studied the following classes of spaces which constitute a combination of the covering type selection principles and the selection properties which are variations of separability. These properties thus represent, in some way, one of the strongest generalizations among the existing selection principles \cite{BG}.

A space $X$ is \emph{M-{\it nw}-selective} if $nw(X)=\omega$ and for every sequence $({\mathcal N}_n : n\in\omega)$ of countable networks for $X$ one can select finite ${\mathcal F}_n\subset {\mathcal N}_n$, $n\in\omega$, such that $\bigcup_{n\in\omega} {\mathcal F}_n$ is a network for $X$; $X$ is  \emph{R-{\it nw}-selective} if $nw(X)=\omega$ and for every sequence $({\mathcal N}_n : n\in\omega)$ of countable networks for $X$ one can pick  $N_n\in {\mathcal N}_n$, $n\in\omega$, such that $\{N_n :n\in\omega\}$ is a network for $X$; and $X$ is  \emph{H-{\it nw}-selective} if $nw(X)=\omega$ and for every sequence $({\mathcal N}_n : n\in\omega)$ of countable networks for $X$ one can select finite ${\mathcal F}_n\subset {\mathcal N}_n$, $n\in\omega$, such that for any $x\in X$ and any open neighbourhood $U$ of $x$, there exists some $\kappa\in\omega$ such that for any $n\geq\kappa$ there exists $A\in {\mathcal F}_n$ with $x\in A\subseteq U$.

In \cite{BG} it was proved that any R-{\it nw}-selective (M-{\it nw}-selective and H-{\it nw}-selective, respectively) space has countable strong fan tightness (countable fan tightness and the weak Frechet in strict sense property, respectively) hence it is R-separable (M-separable and H-separable, respectively).

The following diagram sums up all the implications that exist between these properties.

%In \cite[Proposition 2.3]{BBMT} it is shown that every space with countable $\pi$-weight has all this properties as the following diagram shows.
%
%\begin{picture}(300,180)
%\put(0,85){\textsf{{countable $\pi$-weight}}} 
%\put(200,85){\textsf{{M-separable}}} 
%\put(110,100){\vector(1,1){25}}
%\put(110,150){\emph{} \textsf{{R-separable}}}
%\put(110,30){\emph{} \textsf{{H-separable}}}
%\put(110,70){\vector(1,-1){25}} 
%\put(160,130){\vector(1,-1){25}}
%\put(160,50){\vector(1,1){25}}
%\put(270,85){\vector(1,0){25}}
%\put(310,85){\textsf{{separable}}} 
%\end{picture}

	\vspace{1cm}
	\begin{flushleft}
	\begin{picture}(300,180) 
	\put(80,95){\textsf{{M-{\it nw}-selective}}} 
	\put(80,190){\textsf{{R-{\it nw}-selective}}}
	\put(80,0){\textsf{{H-{\it nw}-selective}}}
	\put(0,95){\textsf{{Menger}}} 
	\put(0,190){\textsf{{Rothberger}}}
	\put(0,0){\textsf{{Hurewicz}}}
	\put(250,95){\textsf{{M-separable}}} 
	\put(250,190){\textsf{{R-separable}}}
	\put(250,0){\textsf{{H-separable}}} 
	%\put(160,130){{\sf $$\boxed{ \begin{array}{c}
			%	\\
			%	\text{Count. strong fan tightn.}\\
			%	\text{ + separable }\\
			%	\\
			%	\end{array}
			%}$$}}
	
	\put(140,135){\textsf{{Count. strong fan tightn.}}}
	\put(150,125){\textsf{{(with resp. to dense)}}}
	\put(165,115){\textsf{{and separable}}}
	\put(165,40){\textsf{{Count. fan tightn.}}}
	\put(160,30){\textsf{{(with resp. to dense)}}}
	\put(175,20){\textsf{{and separable}}}
	\put(150,-55){\textsf{{Weak Frechet in strict sense}}}
	\put(165,-65){\textsf{{(with resp. to dense)}}}
	\put(175,-75){\textsf{{and separable}}}
	\put(155,98){\vector(1,0){75}}
	\put(155,193){\vector(1,0){75}}
	\put(155,3){\vector(1,0){75}}
	\put(110,173){\vector(0,-1){55}}
	\put(110,13){\vector(0,1){65}}
	\put(75,98){\vector(-1,0){25}}
	\put(75,193){\vector(-1,0){25}}
	\put(75,3){\vector(-1,0){25}}
	\put(20,173){\vector(0,-1){55}}
	\put(20,13){\vector(0,1){65}}
	\put(280,173){\vector(0,-1){55}}
	\put(280,13){\vector(0,1){65}}
	\put(150,173){\vector(1,-1){25}}
	\put(220,148){\vector(1,1){25}}
	\put(150,78){\vector(1,-1){25}}
	\put(220,53){\vector(1,1){25}}
	\put(150,-17){\vector(1,-1){25}}
	\put(220,-42){\vector(1,1){25}}
	\put(340,190){\textsf{{$\delta(X)=\omega$ and}}}
	\put(340,180){\textsf{{$\pi w(X)<cov({\mathcal M})$}}}
	\put(340,95){\textsf{{$\delta(X)=\omega$ and}}}
	\put(340,85){\textsf{{$\pi w(X)<{\hot d}$}}}
	\put(340,0){\textsf{{$\delta(X)=\omega$ and}}}
	\put(340,-10){\textsf{{$\pi w(X)<{\hot b}$}}}
	\put(335,193){\vector(-1,0){25}}
	\put(335,98){\vector(-1,0){25}}
	\put(335,3){\vector(-1,0){25}}
	\put(370,173){\vector(0,-1){55}}
	\put(370,13){\vector(0,1){65}}
	\end{picture}	
\end{flushleft}
\vspace{3cm}
Among other things, in \cite{BG} the following question was posed.
\begin{question}\label{q1}
	Are there uncountable M-{\it nw}-selective (R-{\it nw}-selective or H-{\it nw}-selective) spaces?
\end{question}

In Section \ref{1} we give consistent answer to Question \ref{q1} and we define \textit{trivial} R-, H-, and M-{\it nw}-selective spaces as the ones with countable net weight having, additionally, the cardinality and the weight strictly less then $cov(\M)$, $\frak b$, and $\frak d$, respectively. Since we establish that spaces having cardinalities at least $cov(\M)$, $\frak b$, and $\frak d$, fail to have the R-, H-, and M-{\it nw}-selective properties, respectively, non-trivial examples should eventually have weight greater than or equal to the respective small cardinals.

In Section \ref{sec:non-triv_suff}, using forcing methods, we construct consistent countable non-trivial examples of R-{\it nw}-selective and H-{\it nw}-selective spaces.

In Section \ref{sec:non-triv_necc} we establish some limitations to constructions of non-trivial examples.

In Section \ref{sec:prod} we consistently prove the existence of two H-{\it nw}-selective spaces whose product fails to be M-{\it nw}-selective.
 
Finally, in Section \ref{sec:HFD} we study the relations between a strong version of the HFD property (see Definition \ref{verystrongHFD}) and the R-{\it nw}-selectivity.
%\smallskip

%\section{Terminologies and preliminaries}
%A family $\mathcal N$ of sets is called a network for $X$ if for every $x\in X$ and for every open neighbourhood $U$ of $x$ there exists an element $N$ of $\mathcal N$ such that $x\in N\subseteq U$; $nw(X)=min\{|{\mathcal N}|: {\mathcal N}$ is a network for $X\}$ is the net weight of $X$. 
%$iw(X)=min\{w(Y): Y \hbox{ is the continuous bijective image of } X\}$, where $w(X)$ denotes the weight of the space $X$, is the injective weight of $X$. It is known that $iw(X)\leq nw(X)\leq w(X)$, and in the class of compact Hausdorff spaces $iw(X)= nw(X)= w(X)$ (see \cite{Ba}).

%%%%%%%%%%%%%%%%%%%%%%%%%%%%%%%%%%%%%%%%%%%%%%%%%%%%%%%%%%
%\begin{theorem}\label{thm:miller_triv}
%In the Miller model every countable $M$-$nw$-selective space $X$ has
%weight $w(X)<\hot d$.
%\end{theorem}

%\begin{theorem}\label{thm:laver_triv}
%In the Laver model every countable $H$-$nw$-selective Hausdorff space $X$ has
%weight $w(X)<\hot b$.
%\end{theorem}

%%%%%%%%%%%%%%%%%%%%%%%%%%%%%%%%%%%%%%%%%%%%%%%%%%%%%%%%%%%%

\section{Cardinality and weight of M-$\mathit{nw}$-, R-$\mathit{nw}$- and H-$\mathit{nw}$-selective spaces}\label{1}

\begin{proposition}\label{prp:m-card}%\rm 
	Let $X$ be a space such that $nw(X)=\omega$,  $|X|<{\hot d}$ and $w(X)<{\hot d}$. Then $X$ is M-{\it nw}-selective.	
\end{proposition}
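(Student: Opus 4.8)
The plan is to reduce the required selection to the fact that a subfamily of $\w^\w$ of size less than $\dd$ cannot be dominating. First I would fix, once and for all, a base $\B$ for $X$ with $|\B|=w(X)$, assuming every member of $\B$ nonempty and $X$ infinite (the finite case being trivial). Here I use the standard observation that a family $\mathcal G$ of subsets of $X$ is a network for $X$ as soon as, for every pair $(x,B)$ with $B\in\B$ and $x\in B$, there is $G\in\mathcal G$ with $x\in G\subseteq B$: indeed, given $x$ and an open $U\ni x$, one first shrinks $U$ to a basic $B$ with $x\in B\subseteq U$. Consequently it suffices to choose the finite sets $\F_n\subseteq\mathcal N_n$ so that $\bigcup_{n\in\w}\F_n$ contains, for every such pair $(x,B)$, some $G$ with $x\in G\subseteq B$.

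Next I would put $P=\{(x,B):B\in\B,\ x\in B\}$ and note that $|P|\le|X|\cdot w(X)<\dd$; this is precisely where both hypotheses $|X|<\dd$ and $w(X)<\dd$ enter, via $\kappa\cdot\lambda=\max\{\kappa,\lambda\}$ for infinite cardinals. Enumerating each countable network as $\mathcal N_n=\{N^n_k:k\in\w\}$, for $p=(x,B)\in P$ and $n\in\w$ the set $\{k\in\w:x\in N^n_k\subseteq B\}$ is nonempty since $\mathcal N_n$ is a network; let $g_p(n)$ be its least element, so $g_p\in\w^\w$. Because $|\{g_p:p\in P\}|<\dd$, this family is not dominating, so there is $h\in\w^\w$ with $\neg(h\leq^* g_p)$ for every $p\in P$; equivalently, $\{n:g_p(n)<h(n)\}$ is infinite for each $p$.

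Finally I would set $\F_n=\{N^n_k:k<h(n)\}$, a finite subset of $\mathcal N_n$. Given $p=(x,B)\in P$, any $n$ with $g_p(n)<h(n)$ yields $N^n_{g_p(n)}\in\F_n$ with $x\in N^n_{g_p(n)}\subseteq B$; hence $\bigcup_{n\in\w}\F_n$ fulfils the requirement isolated in the first paragraph and is thus a network for $X$, so $X$ is \Mnw.

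I do not anticipate a genuine obstacle: once the problem is phrased through the pairs $(x,B)$ the construction is essentially forced. The two points that need care are (i) keeping the index set $P$ of cardinality $<\dd$, which is exactly why one needs \emph{both} $|X|<\dd$ and $w(X)<\dd$ (a space with countable net weight may well have weight $\ge\dd$), and (ii) the orientation of ``not dominating'': what is wanted is $g_p(n)<h(n)$ for infinitely many $n$, i.e.\ $\neg(h\leq^* g_p)$, not $g_p\leq^* h$.
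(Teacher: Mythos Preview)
Your proof is correct and is essentially identical to the paper's: you index by pairs $(x,B)$ where the paper indexes by $(\alpha,\beta)$, define the same minimal-witness functions, and use that a family of size $<\hot d$ is not dominating to find $h$ with $g_p(n)<h(n)$ infinitely often, then take $\F_n=\{N^n_k:k<h(n)\}$ (the paper uses $k\leq f(n)$, an immaterial difference). Your remarks on the two delicate points---the cardinality bound on $P$ and the correct orientation $\neg(h\leq^* g_p)$---are exactly the places where the argument could go wrong, and you handle them correctly.
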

\begin{proof}
	Let $\kappa,\lambda<{\hot d}$ be two cardinals such that $X=\{x_\alpha: \alpha<\kappa\}$ and $\mathcal B=\{B_\beta:\beta<\lambda\}$ is a  base for $X$.
	Let $\la \mathcal N_n:n\in \omega\ra$ be a sequence of countable networks on $X$, where $\mathcal N_n=\{N_m^n: m\in\omega\}$.
	For every $\alpha <\kappa$ and $\beta<\lambda$ such that $x_\alpha\in B_\beta$ consider the function $f_{\alpha,\beta}\in\omega^\omega$ defined by $f_{\alpha,\beta}(n)=\min\{m: x_\alpha\in N_m^n\subseteq B_\beta\}$. The family $\{f_{\alpha,\beta}: \alpha<\kappa,\beta<\lambda, x_\alpha\in B_\beta\}$ is not dominating, and hence  there exists a function $f\in\omega^\omega$ such that $f\not\leq^* f_{\alpha,\beta}$ for every $\alpha <\kappa$ and $\beta<\lambda$ such that 
 $x_\alpha\in B_\beta$. A direct verification shows that $\{N_m^n: n\in \omega,  m\leq f(n)\}$ is a network for $X$.
\end{proof}

In an analogous way it is possible to prove the following two propositions, for the first one using
Theorem~\ref{thm:cov-char}.
\begin{proposition} \label{prp:r-card}%\rm
	Let $X$ be a space such that $nw(X)=\omega$, $|X|<cov({\mathcal M})$ and $w(X)<cov(\mathcal M)$. Then $X$ is R-{\it nw}-selective.	
\end{proposition}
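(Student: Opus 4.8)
The plan is to run the proof of Proposition~\ref{prp:m-card} almost verbatim, replacing the ``not dominating'' step by the characterisation of $cov(\mathcal M)$ recorded in Theorem~\ref{thm:cov-char}. Fix cardinals $\kappa,\lambda<cov(\mathcal M)$ with $X=\{x_\alpha:\alpha<\kappa\}$ and $\mathcal B=\{B_\beta:\beta<\lambda\}$ a base for $X$, and let $\la\mathcal N_n:n\in\omega\ra$ be a sequence of countable networks, $\mathcal N_n=\{N^n_m:m\in\omega\}$. For every pair $(\alpha,\beta)$ with $x_\alpha\in B_\beta$ I would define $f_{\alpha,\beta}\in\omega^\omega$ by $f_{\alpha,\beta}(n)=\min\{m:x_\alpha\in N^n_m\subseteq B_\beta\}$; the minimum exists for each $n$ because $\mathcal N_n$ is a network and $B_\beta$ is an open neighbourhood of $x_\alpha$.

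Next, since $F:=\{f_{\alpha,\beta}:\alpha<\kappa,\ \beta<\lambda,\ x_\alpha\in B_\beta\}$ has cardinality $<cov(\mathcal M)$, the remark following Theorem~\ref{thm:cov-char} supplies a function $g\in\omega^\omega$ guessing $F$, i.e.\ such that for every $f\in F$ one has $g(n)=f(n)$ for infinitely many $n\in\omega$. I then set $N_n:=N^n_{g(n)}\in\mathcal N_n$ and claim that $\{N_n:n\in\omega\}$ is a network for $X$, which is exactly what is needed for $X$ to be R-{\it nw}-selective.

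To verify the claim, take $x\in X$ and an open set $U$ with $x\in U$. Pick $\beta<\lambda$ with $x\in B_\beta\subseteq U$ and write $x=x_\alpha$. Then $f_{\alpha,\beta}\in F$, hence $g(n)=f_{\alpha,\beta}(n)$ for infinitely many $n$, and for every such $n$ the definition of $f_{\alpha,\beta}$ yields $x\in N^n_{g(n)}=N_n\subseteq B_\beta\subseteq U$. Thus some $N_n$ (in fact infinitely many) lies between $x$ and $U$, so $\{N_n:n\in\omega\}$ is indeed a network for $X$.

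The one point deserving attention — and the reason the relevant cardinal is $cov(\mathcal M)$ rather than ${\hot d}$ — is that witnessing the network property for a given $x$ and $U$ requires only a single chosen $N_n$ trapped between $x$ and $U$, so infinitely-often agreement of $g$ with $f_{\alpha,\beta}$ is enough; by contrast, in the finite-selection Proposition~\ref{prp:m-card} one needs cofinitely many of the sets $\mathcal F_n$ to contain such a witness, which is what forces the use of $\leq^*$-unboundedness there. Apart from this, the argument is a routine diagonalisation and I do not expect any genuine obstacle.
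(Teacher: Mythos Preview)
Your proof is correct and is exactly the argument the paper has in mind: the authors simply say the proposition is proved ``in an analogous way'' to Proposition~\ref{prp:m-card}, ``using Theorem~\ref{thm:cov-char}'', and your write-up spells this out precisely. One small slip in your closing editorial remark: in the M-{\it nw}-selective case one does \emph{not} need cofinitely many $\mathcal F_n$ to contain a witness (that would be the H-version); a single $n$ suffices there too, and the reason $\hot d$ enters is only that non-domination gives the needed ``$f_{\alpha,\beta}(n)\leq f(n)$ for some $n$''. This does not affect the validity of your proof of the proposition itself.
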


\begin{proposition}\label{prp:h-card}%\rm
	Let $X$ be a space such that $nw(X)=\omega$, $|X|<{\hot b}$ and $w(X)<{\hot b}$. Then $X$ is H-{\it nw}-selective.	
\end{proposition}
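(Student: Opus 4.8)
The plan is to run the argument for Proposition~\ref{prp:m-card} almost verbatim, replacing ``the family $\{f_{\alpha,\beta}\}$ is not dominating'' by ``the family $\{f_{\alpha,\beta}\}$ is bounded'', which is precisely what having fewer than ${\hot b}$ functions provides. Concretely, I would fix an enumeration $X=\{x_\alpha:\alpha<\kappa\}$ with $\kappa<{\hot b}$, a base $\mathcal B=\{B_\beta:\beta<\lambda\}$ for $X$ with $\lambda<{\hot b}$, and a sequence $\la\mathcal N_n:n\in\omega\ra$ of countable networks, $\mathcal N_n=\{N^n_m:m\in\omega\}$. For every pair $(\alpha,\beta)$ with $x_\alpha\in B_\beta$ let $f_{\alpha,\beta}\in\omega^\omega$ be given by $f_{\alpha,\beta}(n)=\min\{m:x_\alpha\in N^n_m\subseteq B_\beta\}$; this is well defined since each $\mathcal N_n$ is a network and each $B_\beta$ is open.

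Since there are at most $|X|\cdot w(X)<{\hot b}$ such functions, the family $\{f_{\alpha,\beta}:\alpha<\kappa,\ \beta<\lambda,\ x_\alpha\in B_\beta\}$ is bounded, so there is $g\in\omega^\omega$ with $f_{\alpha,\beta}\leq^* g$ for every admissible pair $(\alpha,\beta)$. I would then put $\mathcal F_n=\{N^n_m:m\leq g(n)\}$, a finite subset of $\mathcal N_n$, and verify the H-{\it nw}-selective condition directly: given $x=x_\alpha$ and an open neighbourhood $U$ of $x_\alpha$, pick $\beta$ with $x_\alpha\in B_\beta\subseteq U$; then $f_{\alpha,\beta}\leq^* g$ gives $\kappa_0\in\omega$ such that for all $n\geq\kappa_0$ the set $A:=N^n_{f_{\alpha,\beta}(n)}$ belongs to $\mathcal F_n$ and satisfies $x_\alpha\in A\subseteq B_\beta\subseteq U$, which is exactly the requirement.

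I do not expect a genuine obstacle here; the only point worth highlighting is why the relevant cardinal is ${\hot b}$ and not ${\hot d}$. It is the ``for all but finitely many $n$'' quantifier (rather than ``for infinitely many $n$'') in the definition of H-{\it nw}-selectivity: a single $g$ that eventually dominates all the $f_{\alpha,\beta}$ forces, for each fixed point/neighbourhood pair, a witnessing network element into $\mathcal F_n$ for all sufficiently large $n$ at once, whereas in Proposition~\ref{prp:m-card} it sufficed to have $f(n)\geq f_{\alpha,\beta}(n)$ for infinitely many $n$. Note also that a \emph{base} of size $<{\hot b}$ is genuinely used -- not merely a countable network -- because the H-{\it nw}-selective condition quantifies over arbitrary open neighbourhoods $U$, which we must be able to shrink to sets we control.
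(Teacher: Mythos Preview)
Your proof is correct and is exactly the analogous argument the paper has in mind: the paper does not spell out the proof of Proposition~\ref{prp:h-card} but simply remarks that it is proved in the same way as Proposition~\ref{prp:m-card}, and your write-up carries this out verbatim with the one necessary change (boundedness below $\hot b$ in place of non-dominance below $\hot d$).
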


%\my{Propositions \ref{prp:m-card}, \ref{prp:r-card}, and \ref{prp:h-card} imply that the positive answer to Question \ref{q1} is consistent: If $\omega_1< {\hot d}$ (resp. $\omega_1< cov({\mathcal M})$, $\omega_1< \hot b$), then any $X\in [\mathbb R]^{\w_1}$ is $M$-$\mathit{nw}$-selective (resp. $R$-$\mathit{nw}$-selective, $H$-$\mathit{nw}$-selective).}

In what follows we shall call spaces satisfying the assumptions of
Proposition~\ref{prp:m-card}, \ref{prp:h-card} and \ref{prp:r-card}
\emph{trivial examples} of M-{\it nw}-selective, H-{\it nw}-selective\ and R-{\it nw}-selective\ spaces, respectively.
The reason for this terminology is that such spaces have these properties 
solely due to cardinality considerations, and not because of some 
specific structure etc.

\begin{proposition}\label{prop:not_d}%\rm
	Let $X$ be a space such that $|X|\geq\hot d$. Then $X$ is not M-{\it nw}-selective.
\end{proposition}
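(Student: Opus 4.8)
The plan is as follows. Since being M-$\nw$-selective includes the requirement $nw(X)=\w$, we may assume $nw(X)=\w$ — otherwise there is nothing to prove — and fix a countable network $\mathcal P=\{P_k:k\in\w\}$ for $X$. I will produce a sequence $(\mathcal N_n)_{n\in\w}$ of countable networks for $X$ such that no choice of finite $\mathcal F_n\sbst\mathcal N_n$ gives a network. Using $|X|\ge\dd$, fix pairwise distinct points $\{x_\alpha:\alpha<\dd\}\sbst X$. Using that $\dd$ is also the least cardinality of a family $\{g_\alpha:\alpha<\dd\}\sbst\w^\w$ such that for every $f\in\w^\w$ there is $\alpha<\dd$ with $f(n)\le g_\alpha(n)$ for all $n$ (this is the $\le$-versus-$\le^*$ remark recalled before Theorem~\ref{thm:cov-char}), fix such a family, paired with the points by the common index $\alpha$.

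Now the construction. For $n,m\in\w$ put
\[
S^n_m:=\{x_\alpha:\alpha<\dd,\ g_\alpha(n)\ge m\},
\]
so that $S^n_{m'}\sbst S^n_m$ whenever $m\le m'$, and $S^n_0=\{x_\alpha:\alpha<\dd\}$. Fix once and for all an enumeration $\w\times\w=\{(k_j,m_j):j\in\w\}$ with $m_j\le j$ for every $j$ (e.g.\ take $(k_j,m_j)$ to be the pair with Cantor code $j$, since the Cantor pairing $\la k,m\ra$ always satisfies $\la k,m\ra\ge m$), and set
\[
\mathcal N_n:=\{R^n_j:j\in\w\},\qquad R^n_j:=P_{k_j}\sm S^n_{m_j}.
\]
Each $\mathcal N_n$ is a network for $X$: given $x\in X$ and open $V\ni x$, pick $k$ with $x\in P_k\sbst V$; if $x\notin\{x_\alpha:\alpha<\dd\}$ then $x\in P_k\sm S^n_0\sbst V$, and $P_k\sm S^n_0\in\mathcal N_n$; if $x=x_\alpha$ then $g_\alpha(n)<g_\alpha(n)+1$ gives $x_\alpha\notin S^n_{g_\alpha(n)+1}$, hence $x_\alpha\in P_k\sm S^n_{g_\alpha(n)+1}\sbst V$ and $P_k\sm S^n_{g_\alpha(n)+1}\in\mathcal N_n$. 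The crucial feature is a monotonicity: if $x_\alpha\in R^n_j$ then $x_\alpha\notin S^n_{m_j}$, i.e.\ $g_\alpha(n)<m_j$, and since $m_j\le j$ this forces $j>g_\alpha(n)$.

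With these networks the selection is doomed. Let $\mathcal F_n\sbst\mathcal N_n$ be finite for every $n$; writing $\mathcal F_n=\{R^n_j:j\in J_n\}$ for a finite $J_n\sbst\w$ (one index per element), set $f(n):=\max J_n$ (and $f(n)=0$ if $J_n=\emptyset$). Pick $\alpha<\dd$ with $f(n)\le g_\alpha(n)$ for all $n$. If a member of $\mathcal F_n$, say $R^n_j$ with $j\in J_n$, contained $x_\alpha$, then by the monotonicity $j>g_\alpha(n)\ge f(n)\ge j$, a contradiction; hence no member of $\bigcup_{n\in\w}\mathcal F_n$ contains $x_\alpha$, so $\bigcup_{n\in\w}\mathcal F_n$ is not a network (there is no $N$ in it with $x_\alpha\in N\sbst X$). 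Thus $X$ is not M-$\nw$-selective.

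The delicate point — and the one I expect to be the real obstacle — is verifying that each $\mathcal N_n$ stays a network: the construction is "burying'' each $x_\alpha$ below depth $g_\alpha(n)$, and a priori uncountably many of the $x_\alpha$ could cluster inside a single network element, so one cannot simply delete them outright. This is precisely what the layered deletions $S^n_m$ handle: at position $j$ one removes only those $x_\alpha$ with $g_\alpha(n)\ge j$, while each fixed $x_\alpha$ reappears at all positions $j>g_\alpha(n)$ and thus still receives arbitrarily small traces from $\mathcal N_n$. (The analogous statements for R- and H-$\nw$-selectivity with $\cov(\M)$, respectively $\bb$, in place of $\dd$ should follow by the same scheme, replacing the dominating family by a family as in Theorem~\ref{thm:cov-char}, respectively by an unbounded family, and adjusting the bookkeeping accordingly.)
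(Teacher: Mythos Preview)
Your proof is correct and follows essentially the same approach as the paper: both fix a countable network and a $\leq$-dominating family $\{g_\alpha:\alpha<\dd\}$ indexed by points of $X$, then manufacture networks $\mathcal N_n$ whose elements can contain $x_\alpha$ only when a suitable index exceeds $g_\alpha(n)$, so that any finite selection misses the point $x_\alpha$ whose function dominates the ``height'' function of the selection. The only cosmetic difference is that the paper intersects the fixed network with the level-set partition $\{y:h(y)(n)=k\}_k\cup\{X\setminus Y\}$, whereas you subtract the nested tail sets $S^n_m$ and then use the bookkeeping $m_j\le j$ to recover the same index bound; both arrive at the conclusion that $\bigcup_n\mathcal F_n$ fails even to cover $X$.
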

\begin{proof}
	 Suppose that $\mathit{nw}(X)=\w$, pick $Y\subseteq X$ such that $|Y|={\hot d}$ and let $h:Y\to D$ be a bijection, where $D\subseteq \omega^\omega$ is dominating. Without loss of generality
   we can assume that for every $f\in \w^\w$ there exists $g\in D$ such that 
   $f(n)\leq g(n)$ for all $n\in\w$ (in what follows we shall write $f\leq g$ in such cases).
   For any $n,k\in\omega$ put $Y_k^n=\{y\in Y:h(y)(n)=k\}$ and consider the countable cover ${\mathcal A}_n=\{Y_k^n: k\in\omega\}\cup \{X\setminus Y\}$ of $X$. For any $n\in\omega$ and  every ${\mathcal B}_n\in[{\mathcal A}_n]^{<\omega}$, $\bigcup_{n\in\omega}\bigcup{\mathcal B}_n\not\supseteq Y$. Indeed, pick $g\in\omega^\omega$ such that ${\mathcal B}_n\subseteq \{Y_k^n:k\leq g(n)\}\cup \{X\setminus Y\}$. Pick $y\in Y$ such that $h(y)(n)>g(n)$ for every $n\in \omega$. Then $y\not\in Y_k^n$ for every $k\leq g(n)$, so $y\not\in\bigcup_{n\in\omega}\bigcup {\mathcal B}_n$. Let ${\mathcal N}$ be a countable network of $X$. For each $n\in\omega$ consider the networks $${\mathcal N}_n={\mathcal N}\wedge {\mathcal A}_n=\{N\cap A: N\in{\mathcal N}, A\in {\mathcal A}_n\}.$$
    It follows that if   ${\mathcal F}_n\in [{\mathcal N}_n]^{<\omega}$ for each $n\in \omega$,  then $X$ is not covered by the family $\bigcup_{n\in\omega}{\mathcal F}_n$, so it cannot be a network for
    $X$.
\end{proof}

\begin{corollary}\label{cor:m-card} %\rm
	Let $X$ be a space with $nw(X)=\omega$ and $w(X)<{\hot d}$. Then  $X$ is M-{\it nw}-selective iff
		 $|X|<{\hot d}$.
	In particular, this equivalence holds for  metrizable separable spaces. 
\end{corollary}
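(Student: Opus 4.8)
The plan is to derive this corollary directly from the two preceding results, Proposition~\ref{prp:m-card} and Proposition~\ref{prop:not_d}, so essentially no new work is needed beyond assembling the pieces and checking the ``in particular'' clause.

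First I would prove the forward implication: assume $X$ is M-\textit{nw}-selective. By Proposition~\ref{prop:not_d}, any space (with countable net weight, which we have) of cardinality at least $\hot d$ fails to be M-\textit{nw}-selective; taking the contrapositive gives $|X|<\hot d$. Conversely, suppose $|X|<\hot d$. Then $X$ satisfies all three hypotheses of Proposition~\ref{prp:m-card}: $nw(X)=\omega$ by assumption, $|X|<\hot d$ by the case hypothesis, and $w(X)<\hot d$ by the standing assumption of the corollary. Hence $X$ is M-\textit{nw}-selective. This establishes the stated equivalence.

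For the last sentence, I would observe that a separable metrizable space is second countable, so $w(X)=\omega$, and consequently $nw(X)\le w(X)=\omega$; since $\hot d\ge\aleph_1$, both standing hypotheses $nw(X)=\omega$ and $w(X)<\hot d$ hold automatically. Therefore the equivalence ``$X$ is M-\textit{nw}-selective iff $|X|<\hot d$'' applies to every separable metrizable $X$.

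There is no real obstacle here; the only point requiring a word of care is making explicit why the hypotheses of Proposition~\ref{prp:m-card} are met in the ``$|X|<\hot d$'' direction (in particular that $w(X)<\hot d$ is exactly the hypothesis carried over from the corollary's statement, and is not an extra assumption), and recalling the elementary fact $nw(X)\le w(X)$ together with second countability of separable metrizable spaces for the final clause.
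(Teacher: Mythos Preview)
Your proof is correct and is exactly the argument the paper has in mind: the corollary is stated without proof precisely because it is meant to follow immediately from Propositions~\ref{prp:m-card} and~\ref{prop:not_d}, together with the elementary facts $nw(X)\le w(X)$ and second countability of separable metrizable spaces.
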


%\my{\begin{example}\label{ex:nonM}
%	A non M-{\it nw}-selective space $X$ such that $nw(X)=\omega$,  $|X|={\frak d}$, and $w(X)={\frak d}$.
%\end{example}}
%\my{Let $X$ be a dense subset of $C_p(Y)$ of cardinality $\frak d$, where $Y\subseteq \omega^\omega$ and $|Y|={\frak d}$. Clearly, $nw(X)\leq \omega$ and $\chi(X)=\chi({\Bbb R}^X)=|X|={\frak d}$, then in this case $w(X)=\frak d$. By Proposition \ref{prop:not_d}, $X$ is not M-{\it nw}-selective. $\hfill\triangle$}
\bigskip

By Propositions \ref{prp:m-card} and \ref{prop:not_d}, it is possible to formulate the following result.
\begin{corollary}\label{prop:eqd}
	The following are equivalent facts.
	\begin{enumerate}
		\item $\omega_1<{\frak d}$;
		\item Every space $X$ with $|X|=\omega_1$, $w(X)=\omega_1$, and $nw(X)=\omega$, is M-{\it nw}-selective.
	\end{enumerate}
\end{corollary}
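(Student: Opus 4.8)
The plan is to prove Corollary~\ref{prop:eqd} as a direct consequence of the two preceding results, treating it essentially as a reformulation. The implication $(1)\Rightarrow(2)$ follows immediately from Proposition~\ref{prp:m-card}: if $\omega_1<\hot d$, then any space $X$ with $|X|=\omega_1<\hot d$, $w(X)=\omega_1<\hot d$, and $nw(X)=\omega$ satisfies the hypotheses of that proposition, hence is M-{\it nw}-selective. There is nothing to do here beyond citing the proposition.

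For the implication $(2)\Rightarrow(1)$, I would argue by contraposition. Suppose $\omega_1\not<\hot d$, i.e. $\hot d=\omega_1$ (since $\omega_1\le\hot d$ always holds). Then I need to exhibit a single space $X$ with $|X|=\omega_1$, $w(X)=\omega_1$, $nw(X)=\omega$ that fails to be M-{\it nw}-selective; Proposition~\ref{prop:not_d} will do the work of showing the failure once I have such an $X$ with $|X|\ge\hot d$. The natural candidate is a subspace of the Cantor set (or of $\IR$, or of $\w^\w$ with a suitable topology) of size $\omega_1=\hot d$: any separable metrizable space of size $\omega_1$ has countable net weight and weight $\le\w_1$, and since $|X|=\w_1$ is uncountable the weight is exactly $\w_1$. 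Concretely, take $D\subseteq\w^\w$ a dominating set of size $\hot d=\w_1$ and let $X=D$ with the topology inherited from $\w^\w$; this is metrizable separable, so $nw(X)=\w$, and $w(X)=\w_1$ because $X$ is uncountable and second countable spaces have weight $\le\w$. Since $|X|=\w_1=\hot d\ge\hot d$, Proposition~\ref{prop:not_d} gives that $X$ is not M-{\it nw}-selective, contradicting $(2)$.

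I expect no genuine obstacle here — the only points requiring a moment's care are the standard facts that an uncountable separable metrizable space has weight exactly $\w_1$ when its cardinality is $\w_1$, and that such a space has countable net weight (indeed countable weight). One should also note in passing that the statement of~$(2)$ quantifies over \emph{all} such spaces, so to refute it a single counterexample of the prescribed cardinality and weight suffices, which is exactly what the subspace of $\w^\w$ provides. Thus the corollary is just the conjunction of Propositions~\ref{prp:m-card} and~\ref{prop:not_d} packaged for the case $|X|=w(X)=\w_1$, and the proof is short.
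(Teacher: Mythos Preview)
Your overall strategy is exactly the one the paper has in mind: $(1)\Rightarrow(2)$ is Proposition~\ref{prp:m-card}, and $(2)\Rightarrow(1)$ goes by contraposition using Proposition~\ref{prop:not_d}. However, your counterexample for the contrapositive does not work as written. A dominating set $D\subset\w^\w$ with the subspace topology is separable metrizable, hence second countable, so $w(D)=\w$, not $\w_1$. Your sentence ``$w(X)=\w_1$ because $X$ is uncountable and second countable spaces have weight $\le\w$'' is simply false: $\IR$ itself is uncountable and second countable. Thus $D$ does not satisfy the hypothesis $w(X)=\w_1$ in~$(2)$, and so its failure to be M-{\it nw}-selective does not refute~$(2)$.

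To repair this you must actually produce, in ZFC, a space $X$ with $|X|=\w_1$, $w(X)=\w_1$, and $\nw(X)=\w$. One clean way: pick $Z\subset 2^\w$ with $|Z|=\w_1$, let $\CS$ be a countable clopen base of $Z$ closed under finite unions and complements, and let $Y=Y_{\CS}\subset C_p(Z,2)$ be the corresponding countable dense subspace; then $w(Y)=|Z|=\w_1$ (as noted in Theorem~\ref{thm:suff_rnw}) while $\nw(Y)=\w$ since $Y$ is countable. Now set $X=Z\sqcup Y$: this has $|X|=\w_1$, $w(X)=\max\{w(Z),w(Y)\}=\w_1$, and $\nw(X)=\w$. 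If $\hot d=\w_1$ then $|X|\ge\hot d$, so Proposition~\ref{prop:not_d} gives that $X$ is not M-{\it nw}-selective, refuting~$(2)$ as required.
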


The following fact is analogous to Proposition~\ref{prop:not_d}, we 
present its proof for the sake of completeness.

\begin{proposition}\label{prop:not_covM}%\rm
	Let $X$ be a topological space such that $|X|\geq cov({\mathcal M})$. 
 Then $X$ is not R-{\it nw}-selective.
\end{proposition}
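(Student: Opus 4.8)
The plan is to mimic the argument of Proposition~\ref{prop:not_d}, replacing the domination combinatorics by the "eventually different" characterization of $cov(\mathcal M)$ from Theorem~\ref{thm:cov-char}. First I would assume $nw(X)=\omega$ (otherwise $X$ is not even a candidate), pick $Y\subseteq X$ with $|Y|=cov(\mathcal M)$, and fix a bijection $h\colon Y\to F$, where $F\subseteq\omega^\omega$ is a family witnessing the characterization in Theorem~\ref{thm:cov-char}: for every $g\in\omega^\omega$ there is $f\in F$ with $f(n)\ne g(n)$ for all but finitely many $n$. Equivalently, no single $g$ is eventually equal (at infinitely many coordinates) to... more precisely, for each $g\in\omega^\omega$ there is some $f\in F$ escaping $g$ eventually.

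Next I would build, for each $n\in\omega$, a countable cover $\mathcal A_n=\{Y^n_k:k\in\omega\}\cup\{X\setminus Y\}$ of $X$ by setting $Y^n_k=\{y\in Y: h(y)(n)=k\}$, exactly as in Proposition~\ref{prop:not_d}. The key claim is then: if for each $n$ one picks a single element $A_n\in\mathcal A_n$, then $\bigcup_{n\in\omega}A_n\not\supseteq Y$. Indeed, since we only choose one set per level, there is a $g\in\omega^\omega$ such that for each $n$ either $A_n=Y^n_{g(n)}$ or $A_n=X\setminus Y$; in the latter case redefine $g(n)$ arbitrarily. By the defining property of $F$ there is $f\in F$, say $f=h(y)$, with $f(n)\ne g(n)$ for all $n\ge m$ for some $m$. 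For such $n$ we have $y\notin Y^n_{g(n)}$, and of course $y\notin X\setminus Y$; so $y\notin A_n$ for all $n\ge m$. To fix the finitely many coordinates $n<m$ as well, I would instead note that the choice of $Y$ and $h$ can be arranged so that for every $g$ the witnessing $f$ differs from $g$ at \emph{all} coordinates after possibly shrinking; alternatively, and more cleanly, observe that at the finitely many bad coordinates we still only have finitely many points covered, so one can thin $Y$ by those and repeat — but the simplest route is to pick $y$ with $h(y)(n)\ne g(n)$ for all $n$, which is possible because $|\{y': h(y')(n)=g(n)\text{ for some }n<m\}|<cov(\mathcal M)$ and $|Y|=cov(\mathcal M)$, so removing this small set from the range of $F$ still leaves a witnessing family. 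Either way $y\notin\bigcup_n A_n$, proving the claim.

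Finally I would let $\mathcal N$ be a countable network for $X$ and set $\mathcal N_n=\mathcal N\wedge\mathcal A_n=\{N\cap A: N\in\mathcal N,\ A\in\mathcal A_n\}$, which is again a countable network for $X$ since refining a network by a cover keeps it a network. If $N_n\in\mathcal N_n$ is chosen for each $n$, write $N_n=N_n'\cap A_n$ with $A_n\in\mathcal A_n$; then $\bigcup_n N_n\subseteq\bigcup_n A_n\not\supseteq Y$, so $\{N_n:n\in\omega\}$ does not even cover $X$ and in particular is not a network. Hence $X$ fails $S_1(\mathcal N,\mathcal N)$ for countable networks, i.e.\ $X$ is not R-{\it nw}-selective. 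The only mildly delicate point is the handling of the finitely many exceptional coordinates in the eventually-different relation; everything else is a direct transcription of the proof of Proposition~\ref{prop:not_d} with "dominating" replaced by "eventually different" and "finite subsets $\mathcal B_n$" replaced by "single sets", so I expect no real obstacle there.
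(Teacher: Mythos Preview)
Your overall approach is exactly the paper's, and the structure---refining a fixed network by the covers $\mathcal A_n$ and showing that any transversal misses a point of $Y$---is correct. The only issue is the point you yourself flag as ``mildly delicate,'' and neither of your suggested fixes works as written.

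Fix (a) fails because the finitely many sets $A_n$ with $n<m$ are not ``finitely many points'': each $Y^n_{g(n)}=\{y:h(y)(n)=g(n)\}$ is a full fiber and can have size $cov(\mathcal M)$. Fix (b) fails for the same reason: the set $\{y': h(y')(n)=g(n)\text{ for some }n<m\}$ need not have size $<cov(\mathcal M)$. For instance, nothing prevents every $f\in F$ from satisfying $f(0)=0$; such an $F$ can still witness Theorem~\ref{thm:cov-char}, but if $g(0)=0$ then that set is all of $Y$. Moreover $m$ depends on the $f$ you are trying to find, so the removal is circular.

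The paper's solution is exactly your first instinct (``the choice of $Y$ and $h$ can be arranged\ldots''), made precise: replace $F$ by $F'=\{z\in\omega^\omega:\exists f\in F\ (z=^*f)\}$, the closure of $F$ under finite modifications. Then $|F'|=|F|\cdot\omega=cov(\mathcal M)$, and for every $g$ one can take a witness $f\in F$ and alter it on the finitely many bad coordinates to obtain $f'\in F'$ with $f'(n)\neq g(n)$ for \emph{all} $n$. With this $F'$ in place of $F$, your argument goes through without any exceptional coordinates to handle.
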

\begin{proof}
Suppose that $\mathit{nw}(X)=\w$, pick $Y\subseteq X$ such that $|Y|=cov(\mathcal M)$ and let $h:Y\to F'$ be a bijection, where $F'\subseteq \omega^\omega$ 
is such that for every $g\in\w^\w$ there exists $f\in F'$ such that $g(n)\neq f(n)$ for all $n\in\w$. 
Such an $F'$ exists, e.g., we could take $F$ satisfying Theorem~\ref{thm:cov-char} and set 
$F'=\{z\in\w^\w:\exists f\in F \,(z=^*f)\}$.
    For any $n,k\in\omega$ put $Y_k^n=\{y\in Y:h(y)(n)=k\}$ and consider the countable cover ${\mathcal A}_n=\{Y_k^n: k\in\omega\}\cup \{X\setminus Y\}$ of $X$. For any
    $g\in\w^\w$ we have  $\bigcup_{n\in\omega} Y^n_{g(n)}\not\supseteq Y$. Indeed, pick $f\in F'$ with $f(n)\neq g(n)$ for all $n\in\w$, and let 
    $y\in Y$ be such that $h(y)=f$. Then $y\not\in Y_{g(n)}^n$ for every $n\in\w$ because
    $y\in Y_{f(n)}^n$ and $Y_{f(n)}^n\cap Y_{g(n)}^n=\emptyset$, since the family $\{Y^n_k:k\in\w\}$ is disjoint by the definition. 
    Let ${\mathcal N}$ be a countable network of $X$. For each $n\in\omega$ consider the networks 
    $${\mathcal N}_n={\mathcal N}\wedge {\mathcal A}_n=\{N\cap A: N\in{\mathcal N}, A\in {\mathcal A}_n\}.$$
    It follows that if   $N_n\in {\mathcal N}_n$ for each $n\in \omega$,  then $Y$ is not covered by the family $\{N_n:n\in\w\}$, so it cannot be a network for
    $X$.
\end{proof}

\begin{corollary}\label{cor:r-card} %\rm
	Let $X$ be a space with $nw(X)=\omega$ and $w(X)<cov(\M)$. Then  $X$ is R-{\it nw}-selective iff
		 $|X|<cov(\M)$.
	In particular, this equivalence holds for  metrizable separable spaces. 
\end{corollary}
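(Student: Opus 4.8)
The plan is to derive Corollary~\ref{cor:r-card} directly from the two relevant results already established, namely Proposition~\ref{prp:r-card} and Proposition~\ref{prop:not_covM}, so that the argument amounts to assembling these pieces together with the hypotheses $nw(X)=\omega$ and $w(X)<cov(\M)$. First I would prove the forward direction: assume $X$ is R-\nw-selective; then in particular $nw(X)=\omega$, and by Proposition~\ref{prop:not_covM}, any space of cardinality at least $cov(\M)$ fails to be R-\nw-selective, so taking the contrapositive gives $|X|<cov(\M)$. Note that this half does not even use the assumption $w(X)<cov(\M)$.

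For the converse, I would assume $|X|<cov(\M)$. Combined with the standing hypotheses $nw(X)=\omega$ and $w(X)<cov(\M)$, the space $X$ satisfies exactly the assumptions of Proposition~\ref{prp:r-card}, which then yields that $X$ is R-\nw-selective. This completes the equivalence.

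For the final sentence, I would observe that a metrizable separable space $X$ automatically has a countable base, hence $nw(X)=\omega$ and $w(X)=\omega<cov(\M)$ (recall $cov(\M)\geq\aleph_1$), so the standing hypotheses of the corollary are satisfied vacuously for such spaces; thus the equivalence ``$X$ is R-\nw-selective iff $|X|<cov(\M)$'' applies to every metrizable separable space without extra assumptions.

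I do not expect any real obstacle here: the corollary is a straightforward packaging of Propositions~\ref{prp:r-card} and \ref{prop:not_covM}, exactly parallel to how Corollary~\ref{cor:m-card} follows from Propositions~\ref{prp:m-card} and \ref{prop:not_d}. The only point requiring a moment's care is making sure the weight hypothesis $w(X)<cov(\M)$ is genuinely needed only for the ``if'' direction (it feeds into Proposition~\ref{prp:r-card}), while the ``only if'' direction rests solely on the cardinality bound of Proposition~\ref{prop:not_covM}; and, for the metrizable separable case, recalling the elementary fact that $cov(\M)$ is uncountable so that a countable base trivially witnesses both $nw(X)=\omega$ and $w(X)<cov(\M)$.
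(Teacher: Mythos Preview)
Your proposal is correct and matches the paper's approach: the corollary is stated without proof, being an immediate consequence of Propositions~\ref{prp:r-card} and~\ref{prop:not_covM} exactly as you describe, in direct parallel to Corollary~\ref{cor:m-card}.
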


%\my{The following example is the analogous to the one of Example \ref{ex:nonM}. In fact, it is given without construction.}
%\my{\begin{example}\label{ex:nonR}
%		A non R-{\it nw}-selective space $X$ such that $nw(X)=\omega$,  $|X|=cov({\M})$, and $w(X)=cov({\M})$.
%\end{example}}

%\my{Let $X$ be a dense subset of $C_p(Y)$ of cardinality $cov({\cal M})$, where $Y\subseteq \omega^\omega$ and $|Y|=cov({\cal M})$. Clearly, $nw(X)\leq \omega$ and $\chi(X)=\chi({\Bbb R}^X)=|X|=cov({\cal M})$, then in this case $w(X)=cov({\cal M})$. By Proposition \ref{prop:not_r}, $X$ is not R-{\it nw}-selective. $\hfill\triangle$}

By Propositions \ref{prp:r-card} and \ref{prop:not_covM}, it is possible to formulate the following result.
\begin{corollary}\label{prop:eqcov}
		The following are equivalent facts.
		\begin{enumerate}
			\item $\omega_1<cov({\M})$;
			\item Every space $X$ with $|X|=\omega_1$, $w(X)=\omega_1$, and $nw(X)=\omega$, is R-{\it nw}-selective.
		\end{enumerate}
\end{corollary}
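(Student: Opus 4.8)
The plan is to proceed exactly in parallel with Corollary~\ref{prop:eqd}, replacing $\dd$ by $cov(\M)$, Proposition~\ref{prp:m-card} by Proposition~\ref{prp:r-card}, and Proposition~\ref{prop:not_d} by Proposition~\ref{prop:not_covM}. The implication $(1)\Rightarrow(2)$ needs nothing beyond Proposition~\ref{prp:r-card}: if $\omega_1<cov(\M)$ and $X$ satisfies $|X|=w(X)=\omega_1$ and $nw(X)=\omega$, then $|X|<cov(\M)$ and $w(X)<cov(\M)$, so $X$ is R-{\it nw}-selective.

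For $(2)\Rightarrow(1)$ I would argue contrapositively. Since $cov(\M)\geq\aleph_1$ is a theorem of ZFC, the negation of $(1)$ is simply $cov(\M)=\omega_1$. By Proposition~\ref{prop:not_covM}, \emph{every} topological space $Z$ with $nw(Z)=\omega$ and $|Z|\geq cov(\M)$ fails to be R-{\it nw}-selective, and that proposition places no restriction whatsoever on $w(Z)$. Hence, to refute $(2)$ it is enough to produce a single space $X_0$ with $|X_0|=\omega_1$, $w(X_0)=\omega_1$ and $nw(X_0)=\omega$: any such $X_0$ has $|X_0|=\omega_1=cov(\M)$, so Proposition~\ref{prop:not_covM} already guarantees that $X_0$ is not R-{\it nw}-selective, and therefore $X_0$ witnesses the failure of $(2)$.

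Thus the entire argument reduces to the existence, provably in ZFC, of a space whose cardinality and weight are both $\aleph_1$ while its net weight is countable --- exactly the ingredient already needed for Corollary~\ref{prop:eqd}. Such spaces are well known in general topology; for instance, under CH one may simply take $X_0=C_p([0,1])$, since $nw(C_p([0,1]))=nw([0,1])=\omega$ whereas $w(C_p([0,1]))=|[0,1]|=\cc=\aleph_1=|C_p([0,1])|$, and in the general case one uses an appropriate space of continuous functions or a suitable quotient of a separable metrizable space. Exhibiting such an $X_0$ with both invariants pinned to $\aleph_1$, with no cardinal-arithmetic hypothesis, is the one genuinely non-mechanical point; once it is available, the rest is a routine combination of Propositions~\ref{prp:r-card} and \ref{prop:not_covM} with the trivial bound $cov(\M)\geq\aleph_1$.
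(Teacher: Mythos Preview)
Your overall strategy matches the paper's: the paper's entire ``proof'' is a pointer to Propositions~\ref{prp:r-card} and~\ref{prop:not_covM}, and you correctly unpack how each direction follows. You also go further than the paper by noting explicitly that the direction $\neg(1)\Rightarrow\neg(2)$ requires the existence in ZFC of some space $X_0$ with $|X_0|=w(X_0)=\omega_1$ and $nw(X_0)=\omega$, since otherwise $(2)$ would be vacuously true while $(1)$ fails, e.g., under CH.

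The gap is that you do not actually produce such an $X_0$ outside CH; the phrase ``an appropriate space of continuous functions or a suitable quotient'' needs to be made concrete. Here is one ZFC construction. Fix $A\subset 2^\omega$ with $|A|=\omega_1$ and let $X_0\subset C_p(A)$ be any dense subspace of size $\omega_1$; such a subspace exists because $C_p(A)$ is separable and has cardinality $\hot c\geq\omega_1$. Then $nw(X_0)\leq nw(C_p(A))=\omega$ and $w(X_0)\leq w(C_p(A))=|A|=\omega_1$. For the lower bound, recall that $\pi$-weight is inherited by dense subspaces, and $\pi w(C_p(A))=|A|$: any local $\pi$-base at the zero function must contain, for each finite $F\subset A$, a standard basic open set whose supporting finite set includes $F$, so its cardinality is at least $\cf\big([A]^{<\omega},\subset\big)=|A|$. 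Hence $w(X_0)\geq \pi w(X_0)=\pi w(C_p(A))=\omega_1$, and with this $X_0$ in hand your argument is complete.
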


By Propositions \ref{prop:eqd} and \ref{prop:eqcov}, if $\omega_1=cov(\M)<{\frak d}$ holds, each space of cardinality and weight equal to $\omega_1$ and countable netweight is M-{\it nw}-selective not R-{\it nw}-selective.\\
%%%%%%%%%%%%%%%%%%%%%%%%%%%%%%%%%%%%%%%%%%%%%%%%%%%%%%%%%%%%%%

As in the case of Proposition~\ref{prop:not_covM}, 
the next fact is also analogous to Proposition~\ref{prop:not_d}, but we nonetheless 
present its proof for the sake of completeness.

\begin{proposition}\label{prop:not_b}%\rm
	Let $X$ be a space such that $|X|\geq\hot b$. Then $X$ is not H-{\it nw}-selective.
\end{proposition}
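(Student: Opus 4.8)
The plan is to mimic the proof of Proposition~\ref{prop:not_d}, replacing the domination-based combinatorics with the combinatorics of unbounded families, exactly as the proof of Proposition~\ref{prop:not_covM} adapted the argument using the $cov(\M)$-characterization. So first I would fix a witness to $|X|\geq\hot b$: assuming $\mathit{nw}(X)=\w$ (otherwise there is nothing to prove), I would pick $Y\subseteq X$ with $|Y|=\hot b$ and a bijection $h\colon Y\to B$, where $B\subseteq\w^\w$ is an unbounded family of size $\hot b$. As in the earlier proofs, it is convenient to arrange that $B$ is unbounded with respect to $\leq$ rather than merely $\leq^*$, and moreover to replace each $f\in B$ by all $z\in\w^\w$ with $z=^*f$, so that $B$ is closed under finite modifications; this keeps the later bookkeeping clean.

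Next I would build, for each $n\in\w$, the same countable cover ${\mathcal A}_n=\{Y^n_k:k\in\w\}\cup\{X\sm Y\}$ of $X$, where $Y^n_k=\{y\in Y:h(y)(n)=k\}$, and form the countable networks ${\mathcal N}_n={\mathcal N}\wedge{\mathcal A}_n=\{N\cap A:N\in{\mathcal N},\,A\in{\mathcal A}_n\}$ for a fixed countable network ${\mathcal N}$ of $X$. The point where the Hurewicz-type combinatorics enters: suppose toward a contradiction that ${\mathcal F}_n\in[{\mathcal N}_n]^{<\w}$ are chosen so that the ${\mathcal F}_n$ witness H-{\it nw}-selectivity. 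Define $g\in\w^\w$ by letting $g(n)$ be large enough that every member of ${\mathcal F}_n$ is of the form $N\cap A$ with $A\in\{Y^n_k:k\leq g(n)\}\cup\{X\sm Y\}$ (possible since ${\mathcal F}_n$ is finite). Since $B$ is unbounded with respect to $\leq$, pick $y\in Y$ with $h(y)\not\leq g$, i.e.\ $h(y)(n)>g(n)$ for infinitely many $n$ — actually, using closure of $B$ under finite modifications one gets $y$ with $h(y)(n)>g(n)$ for \emph{all} $n$, which is cleaner. Then for every $n$ and every $A\in{\mathcal F}_n$ of the relevant form, $y\notin A$: indeed $y\in Y$ so $y\notin X\sm Y$, and $y\in Y^n_{h(y)(n)}$ which is disjoint from every $Y^n_k$ with $k\leq g(n)$.

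Finally I would convert this into a failure of the H-{\it nw}-selective selection. Take any open $U$ (e.g.\ $U=X$) and the point $y$ just produced; the defining property of H-{\it nw}-selectivity would require some $\kappa$ such that for all $n\geq\kappa$ there is $A\in{\mathcal F}_n$ with $y\in A\subseteq U$. But we have shown $y\notin A$ for every $A\in{\mathcal F}_n$ and every $n$, a contradiction. Hence no such choice of finite subfamilies exists, so $X$ is not H-{\it nw}-selective. I do not anticipate a genuine obstacle here; the only mild subtlety is the same one already handled in the earlier proofs, namely passing from $\leq^*$-unboundedness to $\leq$-unboundedness with finite-modification closure so that the witness $y$ defeats the selection \emph{for all} $n$ (rather than only cofinally many), which is what the H-{\it nw}-selective definition, with its "for all $n\geq\kappa$'' clause, actually needs — and in fact even the cofinal version suffices to contradict the existence of a suitable $\kappa$, so either route works.
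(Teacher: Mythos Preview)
Your argument is essentially the paper's: fix a $\leq^*$-unbounded $B\subset\w^\w$ of size $\hot b$, build the networks $\NN_n=\NN\wedge\A_n$, bound any finite selection $\F_n$ by some $g\in\w^\w$, and use unboundedness to find $y$ with $h(y)(n)>g(n)$ for infinitely many $n$, so that for arbitrarily large $n$ no element of $\F_n$ contains $y$, contradicting the ``for all $n\geq\kappa$'' clause (with $U=X$). This is exactly what the paper does.

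One correction, though: your ``cleaner'' route via closure under finite modifications does \emph{not} produce $y$ with $h(y)(n)>g(n)$ for all $n$. The property ``$f(n)>g(n)$ for infinitely many $n$'' is already invariant under finite changes of $f$, so closing $B$ under $=^*$ buys nothing here. This is unlike Proposition~\ref{prop:not_covM}, where the starting property was ``$f(n)\neq g(n)$ for all but finitely many $n$'', which genuinely upgrades to ``for all $n$'' after such closure. (Also a minor slip: $\leq^*$-unbounded is the \emph{stronger} notion, and ``$h(y)\not\leq g$'' literally means only ``for some $n$'', not ``for infinitely many''.) But as you correctly note at the end, the cofinal version already suffices, so the proof stands.
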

\begin{proof}
	 Suppose that $\mathit{nw}(X)=\w$, pick $Y\subseteq X$ such that $|Y|={\hot b}$ and let $h:Y\to B$ be a bijection, where $B\subseteq \omega^\omega$ is unbounded. Let $Y_k^n$ and  ${\mathcal A}_n$
  be defined in the same way as in the proof of Proposition~\ref{prop:not_d}. For any $n\in\omega$ and  every ${\mathcal B}_n\in[{\mathcal A}_n]^{<\omega}$
  there exists $I\in [\w]^\w$ with
  $\bigcup_{n\in I}\cup{\mathcal B}_n\not\supseteq Y$. Indeed, pick $g\in\omega^\omega$ such that ${\mathcal B}_n\subseteq \{Y_k^n:k\leq g(n)\}\cup \{X\setminus Y\}$. Pick $y\in Y$ such that $h(y)(n)>g(n)$ for 
  infinitely many
  $n\in \omega$, and let $I$ be the set of all such $n$. 
    %%%%%%%%%%%%%%%%%%%%%%%%%%%%%%%%%%%%%%%%%%%%%
  Then $y\not\in Y_k^n$ for every $k\leq g(n)$ and $n\in I$, so $y\not\in\bigcup_{n\in I}\bigcup {\mathcal B}_n$. Let ${\mathcal N}$ be a countable network of $X$. For each $n\in\omega$ consider the networks ${\mathcal N}_n$
  defined in the same way as in the proof of Proposition~\ref{prop:not_d} and note that if ${\mathcal F}_n\in [{\mathcal N}_n]^{<\omega}$ for each $n\in \omega$,  then $y$ is not covered by the family $\bigcup_{n\in I}{\mathcal F}_n$. Thus, for every  $m\in\w$ there exists $n\geq m$ (namely $\min (I\setminus m)$) such that no $F\in\mathcal F_n$ contains $y$, which implies that $X$ is not H-{\it nw}-selective. 
\end{proof}

\begin{corollary}\label{cor:h-card} %\rm
	Let $X$ be a space with $nw(X)=\omega$ and $w(X)<{\hot b}$. Then  $X$ is H-{\it nw}-selective iff
		 $|X|<{\hot b}$.
	In particular, this equivalence holds for  metrizable separable spaces. 
\end{corollary}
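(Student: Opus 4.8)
The statement to prove, Corollary~\ref{cor:h-card}, asserts that for a space $X$ with $nw(X)=\omega$ and $w(X)<{\hot b}$, being H-{\it nw}-selective is equivalent to $|X|<{\hot b}$, and that this holds in particular for separable metrizable spaces.

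The plan is to observe that this is a direct combination of two results already available in the excerpt. First I would note that the reverse implication ($|X|<{\hot b}$ implies H-{\it nw}-selective) is exactly Proposition~\ref{prp:h-card}: under the standing hypothesis $nw(X)=\omega$ together with $w(X)<{\hot b}$, adding the assumption $|X|<{\hot b}$ puts $X$ squarely in the situation covered by that proposition, so $X$ is H-{\it nw}-selective. Second, the forward implication (H-{\it nw}-selective implies $|X|<{\hot b}$) is the contrapositive of Proposition~\ref{prop:not_b}: if $|X|\geq{\hot b}$, then $X$ is not H-{\it nw}-selective (note that Proposition~\ref{prop:not_b} already includes $nw(X)=\omega$ in its proof, or rather only needs the reformulation that H-{\it nw}-selective spaces have countable net weight, which is part of the definition). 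Putting these two together gives the biconditional.

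The ``in particular'' clause then follows by recalling standard cardinal-function facts: a separable metrizable space $X$ is second countable, so $w(X)=\omega\leq\omega_1\leq{\hot b}$; if ${\hot b}=\omega_1$ then $w(X)=\omega<{\hot b}$ automatically, and more generally for any uncountable separable metrizable space one has $w(X)=\omega<{\hot b}$ since ${\hot b}>\omega$ always holds. Also $nw(X)\leq w(X)=\omega$, so $nw(X)=\omega$. Hence the hypothesis $w(X)<{\hot b}$ is automatically satisfied for separable metrizable spaces, and the equivalence of the corollary applies verbatim: such a space is H-{\it nw}-selective iff $|X|<{\hot b}$.

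I do not anticipate a genuine obstacle here, since both directions are already established in the preceding propositions; the only point requiring a line of care is making sure the hypotheses of Proposition~\ref{prp:h-card} are met — specifically that $w(X)<{\hot b}$ together with the newly-assumed $|X|<{\hot b}$ and the standing $nw(X)=\omega$ are precisely its hypotheses — and that in Proposition~\ref{prop:not_b} the assumption $nw(X)=\omega$ is harmless because H-{\it nw}-selectivity is only defined for spaces of countable net weight. So the proof reduces to: ``If $|X|<{\hot b}$, apply Proposition~\ref{prp:h-card}; if $|X|\geq{\hot b}$, apply Proposition~\ref{prop:not_b}. For the final sentence, recall that separable metrizable spaces are second countable, hence $w(X)=\omega<{\hot b}$ and $nw(X)=\omega$, so the equivalence applies.''
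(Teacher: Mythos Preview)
Your proposal is correct and matches the paper's approach exactly: the paper states this result as an immediate corollary of Propositions~\ref{prp:h-card} and~\ref{prop:not_b} without further proof, and your argument spells out precisely that combination. The only minor redundancy is the case-splitting on whether ${\hot b}=\omega_1$ in the ``in particular'' clause; since ${\hot b}$ is always uncountable, $w(X)=\omega<{\hot b}$ holds outright for any second countable space.
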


%\my{The following example is the analogous to the one of Example \ref{ex:nonM}. In fact, as the Example \ref{ex:nonR} is given without construction.}
%\my{\begin{example}\label{ex:nonH}
%		A non H-{\it nw}-selective space $X$ such that $nw(X)=\omega$,  $|X|={\frak b}$, and $w(X)={\frak b}$.
%\end{example}}

%\my{Let $X$ be a dense subset of $C_p(Y)$ of cardinality ${\frak b}$, where $Y\subseteq \omega^\omega$ and $|Y|={\frak b}$. Clearly, $nw(X)\leq \omega$ and $\chi(X)=\chi({\Bbb R}^X)=|X|={\frak b}$, then in this case $w(X)={\frak b}$. By Proposition \ref{prop:not_h}, $X$ is not H-{\it nw}-selective. $\hfill\triangle$}

By Propositions \ref{prp:h-card} and \ref{prop:not_b} it is possible to formulate the following result.
\begin{corollary}\label{prop:eqb}
		The following are equivalent facts.
		\begin{enumerate}
			\item $\omega_1<{\frak b}$;
			\item Every space $X$ with $|X|=\omega_1$, $w(X)=\omega_1$, and $nw(X)=\omega$, is H-{\it nw}-selective.
		\end{enumerate}
\end{corollary}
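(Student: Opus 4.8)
The plan is to establish the equivalence of Corollary~\ref{prop:eqb} by citing the two relevant propositions, exactly as the analogous Corollaries~\ref{prop:eqd} and \ref{prop:eqcov} are obtained from their respective pairs. For the direction $(1)\Rightarrow(2)$, I would invoke Proposition~\ref{prp:h-card}: if $\omega_1<{\hot b}$ and $X$ is any space with $|X|=\omega_1$, $w(X)=\omega_1$ and $nw(X)=\omega$, then $|X|=\omega_1<{\hot b}$ and $w(X)=\omega_1<{\hot b}$, so the hypotheses of Proposition~\ref{prp:h-card} are met and $X$ is H-{\it nw}-selective. This is immediate and requires no further argument.

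For the direction $(2)\Rightarrow(1)$, I would argue contrapositively: suppose $\omega_1\geq{\hot b}$, i.e., ${\hot b}=\omega_1$ (since ${\hot b}\geq\omega_1$ always holds). The plan is then to exhibit a space $X$ with $|X|=\omega_1$, $w(X)=\omega_1$, $nw(X)=\omega$ that fails to be H-{\it nw}-selective. The natural candidate is a set $Y\subseteq\mathbb R$ of cardinality ${\hot b}=\omega_1$ whose image under some bijection onto $\omega^\omega$ is unbounded — such a $Y$ exists by definition of ${\hot b}$, and as a subspace of $\mathbb R$ it is metrizable separable with $nw(Y)=\omega$ and $w(Y)=|Y|=\omega_1$. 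By Proposition~\ref{prop:not_b}, any space with $|X|\geq{\hot b}$ fails to be H-{\it nw}-selective, so $Y$ works. Thus the negation of (2) holds, completing the contrapositive.

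I expect the only mild subtlety to be the observation that a subspace of $\mathbb R$ of size $\omega_1$ has weight $\omega_1$ rather than $\omega$; this is standard (a metrizable space has weight equal to $\max\{\omega, d(X)\}$ and also equal to $\max\{\omega,|X|\}$ when $X$ has no isolated points issue — more simply, $w(X)\leq|X|$ for second-countable-ish reasoning fails, but for an uncountable subset of $\mathbb R$ one has $w(X)=|X|$ since any base must separate the $|X|$-many points and $|X|\leq w(X)\leq 2^{\aleph_0}$, while here $|X|=\omega_1$ forces $w(X)=\omega_1$ as $X$ is not second countable being uncountable and $T_1$ with a countable base would make it second countable hence of size $\leq 2^{\aleph_0}$ — actually the clean statement is: an uncountable metrizable space has weight equal to its cardinality when the cardinality is below the continuum is false in general, but $\omega_1\leq w(X)$ holds because a countable base would make $X$ second countable and hence... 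I would simply note $w(X)=|X|=\omega_1$ for $X\subseteq\mathbb R$ uncountable of size $\omega_1$, citing that metrizable spaces satisfy $w(X)=\max\{\omega,d(X)\}$ and using $d(X)\leq|X|$ together with the fact that an uncountable $X$ is not second countable). To stay safe I would phrase it as: since $X$ is a metrizable space, $w(X)=nw(X)$ would force second countability; but $|X|=\omega_1>\omega$ rules this out, hence $w(X)>\omega$, and $w(X)\leq|X|=\omega_1$ gives $w(X)=\omega_1$.

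The main obstacle, if any, is purely bookkeeping: making sure the constructed witness $Y$ genuinely has all three required invariants ($|Y|=\omega_1$, $w(Y)=\omega_1$, $nw(Y)=\omega$) simultaneously, which is guaranteed by taking $Y\subseteq\mathbb R$ of size ${\hot b}=\omega_1$ with unbounded image in $\omega^\omega$. Everything else reduces to quoting Propositions~\ref{prp:h-card} and \ref{prop:not_b}, so the proof is a two-line assembly exactly parallel to the preceding corollaries.
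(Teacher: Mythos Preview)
Your overall outline matches the paper exactly: the corollary is stated to follow ``by Propositions~\ref{prp:h-card} and \ref{prop:not_b}'', and the paper gives no further details. The direction $(1)\Rightarrow(2)$ via Proposition~\ref{prp:h-card} is fine.

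However, your proposed witness for $\neg(1)\Rightarrow\neg(2)$ does not work. You claim that an uncountable set $Y\subseteq\mathbb R$ of size $\omega_1$ has $w(Y)=\omega_1$, but this is false: every subspace of $\mathbb R$ is second countable, so $w(Y)=\omega$. (Your own formula $w(Y)=\max\{\omega,d(Y)\}$ gives this immediately, since $Y$ is hereditarily separable.) Thus $Y$ does \emph{not} satisfy the hypothesis $w(X)=\omega_1$ in item~(2), and cannot serve as a counterexample. The confusion in your paragraph---where you assert that an uncountable metrizable space cannot be second countable---is simply a slip; $[0,1]$ is an obvious counterexample.

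The repair is easy and keeps the proof a two-line assembly. Proposition~\ref{prop:not_b} already tells you that \emph{every} space with $|X|=\omega_1={\hot b}$ fails to be H-{\it nw}-selective, so you only need to know that at least one space with $|X|=w(X)=\omega_1$ and $nw(X)=\omega$ exists. For instance, take the topological sum $X=D\oplus Y$, where $D$ is a countable dense subspace of $2^{\omega_1}$ (so $|D|=\omega$, $w(D)=\omega_1$, $nw(D)\leq|D|=\omega$) and $Y\subseteq\mathbb R$ has $|Y|=\omega_1$. Then $|X|=\omega_1$, $w(X)=\omega_1$, $nw(X)=\omega$, and Proposition~\ref{prop:not_b} applies.
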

By Propositions \ref{prop:eqd} and \ref{prop:eqb}, if $\omega_1={\frak b}<{\frak d}$ holds, each space of cardinality and weight equal to $\omega_1$ and countable netweight is M-{\it nw}-selective not H-{\it nw}-selective. By Propositions \ref{prop:eqcov} and \ref{prop:eqb}, if $\omega_1=cov(\M)<{\frak b}$ holds, each space of cardinality and weight equal to $\omega_1$ and countable netweight is H-{\it nw}-selective not R-{\it nw}-selective. By Propositions \ref{prop:eqcov} and \ref{prop:eqb}, if $\omega_1={\frak b}<cov(\M)$ holds, each space of cardinality and weight equal to $\omega_1$ and countable netweight is R-{\it nw}-selective not H-{\it nw}-selective.
Additionally, Corollaries \ref{cor:m-card}, \ref{cor:r-card} and \ref{cor:h-card}
allow us to find consistent examples of sets of reals   distinguishing 
between the corresponding properties: 
If $cov({\mathcal M})<{\hot d} $
(resp. ${\hot b}<{\hot d}$, $cov({\mathcal M})<{\hot b}$, $\hot b<cov({\mathcal M})$), then any $X\in [\mathbb R]^{cov({\mathcal M})}$
(resp. $X\in [\mathbb R]^{\hot b}$, $X\in [\mathbb R]^{cov({\mathcal M})}$, $X\in [\mathbb R]^{\hot b}$)
is M-{\it nw}-selective\ but not R-{\it nw}-selective\ (resp. M-{\it nw}-selective\ but not H-{\it nw}-selective, H-{\it nw}-selective\ but not R-{\it nw}-selective,
R-{\it nw}-selective\ but not H-{\it nw}-selective). 
  
However, we do not know of any examples in ZFC distinguishing these properties, because at the moment it is not even known whether there are in ZFC
non-trivial countable spaces which are M-{\it nw}-selective, H-{\it nw}-selective, or R-{\it nw}-selective.
More precisely, the following question 
(in fact, each of  the 6 subquestions it naturally comprises)
is still open.

\begin{question} \label{q2}
	\begin{enumerate}
	    \item Is there a ZFC example of a non-trivial M-{\it nw}-selective\  (resp. R-{\it nw}-selective, H-{\it nw}-selective) space $X$? 
 What about countable  spaces?
\item Is the existence  of a non-trivial uncountable M-{\it nw}-selective\  (resp. R-{\it nw}-selective, H-{\it nw}-selective) space $X$ consistent with ZFC?
\end{enumerate}
\end{question}

%%%%%%%%%%%%%%%%%%%%%%%%%%%%%%%%%%%%%%%%%%%%%%%%%
%%%%%%%%%%%%%%%%%%%%%%%%%%%%%%%%%%%%%%%%%%%%%%
%%%%%%%%%%%%%%%%%%%%%%%%%%%%%%%%%%%%%%%%%%%%%%%%%%%%%%%%%%%%%%%%%

\section{$\mathit{nw}$-Selectivity of countable  subspaces of $C_p(X,2)$: sufficient conditions and consistent non-trivial examples.} \label{sec:non-triv_suff}

For a topological space $X$ we denote by
\begin{itemize}
\item $\B(X)$  the family of all
countable  Borel   covers of $X$;
\item $\B_\Omega(X)$  the family of all
countable  Borel   $\w$-covers of $X$;
\item $\B_\Gamma(X)$  the family of all
countable  Borel   $\gamma$-covers of $X$.
\end{itemize}
We omit $X$ from these notations if it is clear from the context.

%%%%%%%%%%%%%%%%%%%%%%%%%%%%%%%%%%%%%%%%%%%%%%%%%%%%%%%%%%%%%%%%%

\begin{theorem} \label{thm:suff_rnw}
	Suppose that $X\subseteq 2^\omega$ is such that $X^n$ is $S_1(\B,\B)$ for every $n\in\omega$. 
  Let $Y\subseteq C_p(X,2)$ be a countable  subset. Then $Y$ is R-{\it nw}-selective. \ Moreover, if 
$Y$ is  
  dense, then
  $w(Y)=|X|$.
\end{theorem}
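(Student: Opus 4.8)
\emph{The plan} is to prove the two assertions in turn: first the $S_1$-selection for networks (the condition $nw(Y)=\omega$ being automatic once we assume, as we may, that $Y$ is infinite), and then the weight computation. The guiding idea for the first part is to encode networks of $Y$ as countable Borel covers of the finite powers $X^k$, and this is exactly why the hypothesis is imposed on \emph{all} $X^n$. Fix an enumeration $Y=\{f_j:j\in\omega\}$. A neighbourhood base at $f\in Y$ in $Y$ is given by the sets $U_{f,\bar x}=\{g\in Y:g(x_i)=f(x_i)\text{ for }i\le k\}$, where $\bar x=(x_1,\dots,x_k)$ ranges over finite tuples from $X$; for a single point write $U_{f,x}=\{g\in Y:g(x)=f(x)\}$. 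The key device: for $N\subseteq Y$ with $f\in N$ set $E(N,f)=\{x\in X:g(x)=f(x)\text{ for every }g\in N\}$. Since $f$ and the members of $N$ are continuous into the discrete space $2$, this is an intersection of clopen subsets of $X$, hence closed (in particular Borel), and so is $E(N,f)^k\subseteq X^k$; moreover $N\subseteq U_{f,\bar x}$ if and only if $\bar x\in E(N,f)^k$.

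\emph{First part.} Given countable networks $(\mathcal N_n:n\in\omega)$ for $Y$, fix a partition $\omega=\bigsqcup_{(j,k)\in\omega\times\omega}P_{j,k}$ into infinite pieces. For $n\in P_{j,k}$ put $\mathcal B^{j,k}_n=\{E(N,f_j)^k:N\in\mathcal N_n,\ f_j\in N\}$, a countable family of Borel subsets of $X^k$. It covers $X^k$: given $\bar x\in X^k$, the set $U_{f_j,\bar x}$ is a neighbourhood of $f_j$ in $Y$, so since $\mathcal N_n$ is a network there is $N\in\mathcal N_n$ with $f_j\in N\subseteq U_{f_j,\bar x}$, hence $\bar x\in E(N,f_j)^k$. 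Thus $\mathcal B^{j,k}_n\in\mathcal B(X^k)$, and applying the hypothesis that $X^k$ is $S_1(\mathcal B,\mathcal B)$ to the sequence $(\mathcal B^{j,k}_n:n\in P_{j,k})$ yields, for each $n\in P_{j,k}$, some $N_n\in\mathcal N_n$ with $f_j\in N_n$ such that $\{E(N_n,f_j)^k:n\in P_{j,k}\}$ covers $X^k$. Ranging over all $(j,k)$ this selects one $N_n\in\mathcal N_n$ for every $n\in\omega$. Finally $\{N_n:n\in\omega\}$ is a network for $Y$: given $f=f_j$ and open $W\ni f$, choose a basic $U_{f,\bar x}\subseteq W$ with $\bar x\in X^k$; since $\{E(N_n,f_j)^k:n\in P_{j,k}\}$ covers $X^k$ there is $n_0\in P_{j,k}$ with $\bar x\in E(N_{n_0},f_j)^k$, i.e.\ $f=f_j\in N_{n_0}\subseteq U_{f,\bar x}\subseteq W$.

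\emph{Second part.} Now let $Y$ be dense in $C_p(X,2)$. Since $Y\subseteq 2^X$ and $2^X$ has a base of size $|X|$, we get $w(Y)\le|X|$. For the reverse inequality I would fix an arbitrary $f\in Y$ and show $\chi(f,Y)\ge|X|$, which suffices because $w(Y)\ge\chi(f,Y)$. Let $\{V_i:i\in I\}$ be a neighbourhood base at $f$ in $Y$, and for each $x\in X$ pick $i_x\in I$ with $V_{i_x}\subseteq U_{f,x}$. It is enough to show that each set $\{x\in X:i_x=i\}$ is finite, since then $|X|\le|I|\cdot\omega=|I|$. Suppose some such set contained an infinite $S$. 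As $V_i$ is an open neighbourhood of $f$ it contains a basic $U_{f,\bar b}$ with underlying finite set $B\subseteq X$; pick $x\in S\setminus B$. Using zero-dimensionality of $X\subseteq 2^\omega$, choose a clopen $C\subseteq X$ with $x\in C$ and $C\cap B=\emptyset$, and let $g$ equal $f$ on $X\setminus C$ and the constant $1-f(x)$ on $C$. Then $g\in C_p(X,2)$, $g\upharpoonright B=f\upharpoonright B$ and $g(x)\ne f(x)$, so the nonempty open set $\{h\in C_p(X,2):h\upharpoonright B=f\upharpoonright B,\ h(x)\ne f(x)\}$ meets the dense set $Y$; any witness $g\in Y$ then lies in $U_{f,\bar b}\subseteq V_i\subseteq U_{f,x}$, forcing $g(x)=f(x)$, a contradiction. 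Hence $w(Y)=|X|$.

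\emph{Expected main obstacle.} The crux is the encoding in the first part: realising that the agreement sets $E(N,f)$ turn a network of $Y$ into a countable Borel cover of each $X^k$, which is precisely what makes the hypothesis on all finite powers of $X$ the right one to invoke. The next most delicate point is the lower bound on $\chi(f,Y)$, where density of $Y$ and zero-dimensionality of $X$ have to be combined to produce, for each coordinate, a function in $Y$ witnessing that no finite basic neighbourhood of $f$ can pin down that coordinate. Everything else — the Borelness and covering properties of $\mathcal B^{j,k}_n$, the partition bookkeeping, and $w\ge\chi$ — is routine.
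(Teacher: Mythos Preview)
Your proof is correct, but the first part follows a different route from the paper's. The paper fixes enumerations $\mathcal N_k=\{N^k_m:m\in\omega\}$, defines for each pair $(j,\vec\epsilon)$ a Borel map $h_{j,\vec\epsilon}:A_{j,\vec\epsilon}\to\omega^\omega$ by $h_{j,\vec\epsilon}(\vec x)(k)=\min\{m:y_j\in N^k_m\subseteq[\vec x,\vec\epsilon]\}$, observes that each image $h_{j,\vec\epsilon}[A_{j,\vec\epsilon}]$ is Rothberger (using hereditariness of $S_1(\B,\B)$ and preservation under Borel images, both cited from \cite{SchTsa02}), and then finds a single $h\in\omega^\omega$ guessing the countable union $R=\bigcup h_{j,\vec\epsilon}[A_{j,\vec\epsilon}]$, so that $\{N^k_{h(k)}:k\in\omega\}$ is the desired network. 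Your argument instead encodes each network element directly as the closed ``agreement set'' $E(N,f_j)^k\subseteq X^k$, applies $S_1(\B,\B)$ for $X^k$ to these covers themselves, and handles the bookkeeping over $(j,k)$ by partitioning $\omega$. Your approach is more self-contained---it uses the hypothesis exactly as stated rather than passing through Borel images and the combinatorial characterisation of Rothberger sets of reals---at the modest cost of the partition device; the paper's approach buys a single selection function $h$ good for all $(j,\vec\epsilon)$ simultaneously.

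For the second part the paper simply invokes the standard fact $\chi(Y)=\chi(C_p(X,2))=|X|$ for dense $Y$; your argument unpacks this by showing directly that $\chi(f,Y)\geq |X|$, which is exactly the content of that fact. (A small remark: your conclusion $|X|\le |I|\cdot\omega$ gives $|I|\ge|X|$ once you note that $X$ infinite forces $I$ infinite, since otherwise some fibre $\{x:i_x=i\}$ would be infinite; and $X$ must be infinite when $Y$ is dense and infinite.)
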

\begin{proof}
	 Clearly,  $|X|\geq w(Y)\geq \chi(Y)$, and if $Y$ is dense, then additionally we have\footnote{This part does not use any additional properties of $X$ like 
  $S_1(\B,\B)$.}
  $\chi(Y)= \chi(C_p(X,2))=|X|$,
  so in this case $w(Y)=|X|$. 

     Let $\{y_j:j\in\omega\}$ be an enumeration of $Y$ and ${\mathcal N}_k=\{N_m^k:m\in\omega\}\subset\mathcal P(Y)$  a countable network for each $k\in\omega$. Given a basic open subset of $C_p(X,2)$ of the form 
     $$[\vec{x},\vec{\epsilon}]=\{f\in C_p(X,2) : f(x_0)=\epsilon_0,...,f(x_{n-1})=\epsilon_{n-1}\},$$
     where $\vec{x}\in X^n,\vec{\epsilon}\in 2^n$, and $j\in\omega$, set
     $A_{j,\vec{\epsilon}}=\{\vec{x}\in X^n:y_j\in [\vec{x},\vec{\epsilon}]\}.$
Let  
     $h_{j,\vec{\epsilon}}: A_{j,\vec{\epsilon}}\to \omega^\omega$ be a function defined by 
     $$h_{j,\vec{\epsilon}}(\vec{x})(k)=\min\{m:y_j\in N_m^k\subseteq [\vec{x},\vec{\epsilon}]\}.$$
$A_{j,\vec{\epsilon}}$ satisfies $S_1(\B,\B)$ because this property is hereditary
by \cite[Theorem~13]{SchTsa02}, and therefore 
$h_{j,\vec{\epsilon}}[A_{j,\vec{\epsilon}}]$ is Rothberger by \cite[Theorem~14]{SchTsa02} because the function $h_{j,\vec{\epsilon}}$ is clearly  Borel. Then $R:=\bigcup_{n\in\omega, j\in\omega,\vec{\epsilon}\in 2^n}h_{j, \vec{\epsilon}}[A_{j,\vec{\epsilon}}]$ is Rothberger, being a countable union of Rothberger spaces. So there exists $h\in\omega^\omega$ such that 
     $\{k\in\w: r(k)=h(k)\}$ is infinite for every $r\in R$.
      As a result. $\{N_{h(k)}^k: k\in\omega\}$ is a network for $Y$. 
      Indeed, pick a basic open set $[\vec{x},\vec{\epsilon}]$ and a point $y_j \in [\vec{x},\vec{\epsilon}]$.  Then $\vec{x}\in A_{j,\vec{\epsilon}}$, and therefore  there exists $k\in\omega$ such that $h_{j,\vec{\epsilon}}(\vec{x})(k) = h(k)$, hence
      $$y_j\in N^k_{h_{j,\vec{\epsilon}}(\vec{x})(k)}=N^k_{h(k)}\subseteq [\vec{x},\vec{\epsilon}],$$
which completes our proof.
\end{proof}

One of the ways to get non-trivial (namely those having size at least $\mathit{cov}(\M)$)
examples of spaces $X$ like in Theorem~\ref{thm:suff_rnw} is using forcing. 
This approach is not new and in a slightly different form was used in 
\cite{Bre96}, so the next fact may be thought of as folklore. We present its proof 
since we were unable to find it published elsewhere in the form we need. 

\begin{proposition}    \label{prop: Bre}
	Let $X=\{c_\alpha: \alpha<\lambda\}$ be the  set of Cohen generic reals over the ground model $V$
added by the standard poset $\mathit{Fin}(\lambda\times \omega, 2)$ consisting of finite partial functions 
from $\lambda\times\w$ to $2$, where $\lambda$ is an uncountable cardinal. Let $G$ be $\mathit{Fin}(\lambda\times \omega, 2)$-generic
filter giving rise to $X$.
 Then in $V[G]$, for any $k\in\omega$ and a sequence $\langle {\mathcal B}_n: n\in\omega\rangle $ of countable Borel covers of $X^k$, for each $n\in\omega$ there is $B_n\in{\mathcal B}_n$ such that $X^k\subseteq \bigcup_{n\in\omega}B_n$. I.e., all finite power of $X$
 satisfy $S_1(\B,\B)$.
 \end{proposition}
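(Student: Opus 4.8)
The plan is to exploit the ccc-ness and the product structure of $\mathbb P:=\mathit{Fin}(\lambda\times\omega,2)$ to reduce to Borel covers coded in a model $V[G\restriction b]$ with $b$ countable, then to split $X^k$ into countably many ``generic pieces'' and prove a corresponding selection statement by induction on the arity. So, fix $k$ and a sequence $\langle\mathcal B_n:n\in\omega\rangle\in V[G]$ of countable Borel covers of $X^k$. Since $\mathbb P$ is ccc and a product, the $\omega$-sequence of Borel codes of the members of the $\mathcal B_n$ lies in $W:=V[G\restriction b]$ for some countable $b\subseteq\lambda$. For disjoint $a,a'\subseteq\lambda$ the generics $G\restriction a,G\restriction a'$ are mutually generic; in particular, for pairwise distinct $\gamma_1,\dots,\gamma_r\in\lambda\setminus b$ the tuple $(c_{\gamma_1},\dots,c_{\gamma_r})$ is $\mathrm{Cohen}^r$-generic over $W$, and over $V[G\restriction b']$ for any countable $b'\subseteq\lambda$ disjoint from $\{\gamma_1,\dots,\gamma_r\}$.

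For countable $b'\subseteq\lambda$ and $r\in\omega$ put $D_{b',r}=\{(c_{\gamma_1},\dots,c_{\gamma_r}):\gamma_1,\dots,\gamma_r\in\lambda\setminus b'\text{ pairwise distinct}\}$. Classifying a tuple of $X^k$ according to which of its coordinates coincide and according to the values of those of its indices lying in $b$, one writes $X^k$ as a countable union of sets $\iota[D_{b,r}]$ with $r\le k$ and $\iota\colon(2^\omega)^r\to(2^\omega)^k$ a continuous injection coded in $W$ (it duplicates coordinates and freezes some coordinates to fixed reals $c_\beta$, $\beta\in b$), together with a countable set of points. Partitioning $\omega$ into countably many infinite blocks, one per member of this decomposition, pulling the $\mathcal B_n$ back along the relevant $\iota$, and covering the leftover points trivially (enumerate them and devote disjoint infinite blocks of indices), it suffices to prove the \emph{Main Lemma}: for every countable $b\subseteq\lambda$, every $r\in\omega$, and every sequence $\langle\mathcal C_n:n\in\omega\rangle\in V[G\restriction b]$ of countable families of Borel subsets of $(2^\omega)^r$ with codes in $V[G\restriction b]$ such that $D_{b,r}\subseteq\bigcup\mathcal C_n$ for all $n$, there are $C_n\in\mathcal C_n$ with $D_{b,r}\subseteq\bigcup_nC_n$. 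The point to record first (Baire-category reformulation, with $W=V[G\restriction b]$): since every element of $D_{b,r}$ is $\mathrm{Cohen}^r$-generic over $W$ and, by varying the $\gamma_i\in\lambda\setminus b$, every finite condition of $\mathrm{Cohen}^r$ is realized by such a tuple, the hypothesis $D_{b,r}\subseteq\bigcup\mathcal C_n$ is equivalent to ``$\bigcup\mathcal C_n$ is comeager in $(2^\omega)^r$'' (a property of the $W$-code); dually, a Borel set coded in some $V[G\restriction b']$ is comeager iff it contains every $\mathrm{Cohen}^r$-generic over $V[G\restriction b']$, in particular $D_{b',r}$.

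I prove the Main Lemma by induction on $r$, the case $r=0$ being trivial. Assume it for $r-1$ (all countable $b$), fix $b$, some $\gamma_0\in\lambda\setminus b$, and $\langle\mathcal C_n\rangle$ as in the lemma; put $W=V[G\restriction b]$ and $W^+=W[c_{\gamma_0}]=V[G\restriction(b\cup\{\gamma_0\})]$. Decompose $D_{b,r}=D'\cup D''$, where $D'=D_{b\cup\{\gamma_0\},r}$ and $D''$ is the set of distinct-index tuples exactly one of whose coordinates equals $c_{\gamma_0}$; then $D''$ is a finite union of sets $\iota_i[D_{b\cup\{\gamma_0\},r-1}]$, $\iota_i$ the continuous injection placing $c_{\gamma_0}$ into coordinate $i$ (coded in $W^+$). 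Split $\omega=I'\sqcup\bigsqcup_{i<r}I_i$ into infinite pieces. On each $I_i$, pull $\langle\mathcal C_n\rangle$ back along $\iota_i$ to a sequence of Borel covers of $D_{b\cup\{\gamma_0\},r-1}$ with codes in $W^+$, apply the inductive hypothesis, and push the selection forward: this covers $D''$ with $\langle C_n:n\in\bigcup_iI_i\rangle$. On $I'$, fix inside $W^+$ a $\mathrm{Cohen}$-generic $m\in\omega^{I'}$ over $W$ (one exists because $W\to W^+$ adds a Cohen real), write $\mathcal C_n=\{C_n^j:j\in\omega\}$, and set $C_n:=C_n^{m(n)}$ for $n\in I'$. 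Then $\bigcup_{n\in I'}C_n$ is comeager: if $\vec z$ is $\mathrm{Cohen}^r$-generic over $W^+$, then $m$ and $\vec z$ are mutually $\mathrm{Cohen}\times\mathrm{Cohen}^r$-generic over $W$ (product lemma, as $\mathrm{Cohen}^r\in V$), so $\vec z$ is $\mathrm{Cohen}^r$-generic over $W$ and hence lies in each $\bigcup\mathcal C_n$, say $\vec z\in C_n^{j_n}$ with $\langle j_n:n\in I'\rangle\in W[\vec z]$; as $m$ is $\mathrm{Cohen}$-generic over $W[\vec z]$, genericity gives $m(n)=j_n$ for some $n\in I'$, whence $\vec z\in C_n^{m(n)}$. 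Thus $\bigcup_{n\in I'}C_n$ is a Borel set with code in $W[m]\subseteq W^+$ meeting every $\mathrm{Cohen}^r$-generic over $W^+$, so it is comeager and contains $D_{b\cup\{\gamma_0\},r}=D'$. Since $D_{b,r}=D'\cup D''$, the assembled selection $\langle C_n:n\in\omega\rangle$ works, completing the induction.

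The genuinely nontrivial ingredient is the Cohen-real step of the induction — that a single Cohen real $m$ over $W$ can be used to pick $C_n\in\mathcal C_n$ with comeager union — together with the reformulation turning ``covers $D_{b,r}$'' into ``comeager''; this is where one actually uses that the $c_\alpha$ are Cohen generics rather than arbitrary reals. The rest (reduction to a countable $b$, the combinatorial decomposition of $X^k$, passing through continuous injective images, the trivial leftover points) is routine bookkeeping, the only care being to keep all the infinite blocks of $\omega$ pairwise disjoint so that the final assignment $n\mapsto B_n\in\mathcal B_n$ is well defined.
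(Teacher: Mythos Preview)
Your proof is correct, and its overall architecture---reduce to covers coded in an intermediate model $W=V[G\restriction b]$, observe that tuples of distinct Cohen reals with indices outside $b$ lie in every comeager Borel set coded in $W$, and handle the leftover by induction on arity---is the same as the paper's.

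The genuine difference is in the selection step for the ``fully generic'' piece. The paper does it by hand, entirely inside $W$: it enumerates the basic clopen boxes $\{[\vec s_n]:n\in I_0\}$ of $(2^\omega)^{k+1}$ and for each $n$ picks $B^n_{i_n}\in\mathcal B_n$ with $B^n_{i_n}\cap[\vec s_n]$ non-meager in $[\vec s_n]$ (this is possible because $\bigcup\mathcal B_n$ is comeager); then $\bigcup_{n\in I_0}B^n_{i_n}$ is non-meager in every basic open, hence comeager, and therefore contains all tuples of pairwise distinct Cohen reals with indices outside the coding set. You instead burn one index $\gamma_0$ to extract a Cohen-generic $m\in\omega^{I'}$ over $W$, set $C_n=C_n^{m(n)}$, and argue comeagerness of $\bigcup_{n\in I'}C_n$ via mutual genericity of $m$ and an arbitrary Cohen$^r$-generic $\vec z$ over $W^+$. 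The paper's argument is shorter and more elementary (no product lemma, no auxiliary generic); your argument is a nice illustration of the ``Cohen real guesses a function'' technique and would transfer unchanged to situations where an explicit non-meager-in-each-box selection is less transparent. Relatedly, the paper's induction is flatly on $k$ (fully generic part plus a countable union of copies of $X^j$, $j\le k$), whereas your Main Lemma inducts on $r$ after first removing a single index $\gamma_0$; this is a harmless reorganisation, but it does force you to work over the slightly larger model $W^+$ at every step.
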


 We shall need the following standard fact whose proof we add for the sake of 
 completeness.

\begin{lemma} \label{lem:coh_in_nonmeag}
Let $\lambda, G, X$ be such as in Proposition~\ref{prop: Bre} and
suppose that $D\subset 2^\omega$ is a Borel non-meager set coded in the ground model $V$. Then there exists $\beta<\lambda $ such that $c_\beta\in D$.
\end{lemma}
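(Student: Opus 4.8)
The plan is to exploit the standard fact that Cohen forcing $\mathit{Fin}(\lambda\times\omega,2)$ factors as a product of smaller Cohen forcings, and that a single Cohen real over $V$ lands in every ground-model-coded non-meager Borel set. First I would fix a Borel code for $D$ in $V$ and recall that, since $D$ is non-meager and Borel, it is comeager in some basic clopen set $[s]$ of $2^\omega$ (Baire property); after translating we may assume $D$ is comeager in $2^\omega$, i.e.\ $D\supseteq\bigcap_{m\in\omega}U_m$ for a decreasing sequence of dense open sets $U_m$ coded in $V$. The key point is then that if $c$ is any real Cohen-generic over $V$, the generic filter meets, for each $m$, the dense set of conditions forcing $c\in U_m$ (this is a density argument inside $V$: given a finite condition, extend it into $U_m$, which is dense open), so $c\in\bigcap_m U_m\subseteq D$.

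Next I would argue that some $c_\beta$ is Cohen-generic over $V$. Write $\mathit{Fin}(\lambda\times\omega,2)\cong \mathit{Fin}(\{\beta\}\times\omega,2)\times\mathit{Fin}((\lambda\setminus\{\beta\})\times\omega,2)$ for a fixed $\beta<\lambda$; the first factor is isomorphic to Cohen forcing $\mathit{Fin}(\omega,2)$ and adds exactly the real $c_\beta$. By the product lemma, $c_\beta$ is $\mathit{Fin}(\omega,2)$-generic over $V$ (indeed over the larger model obtained by the other factor, but genericity over $V$ is all we need). Hence by the previous paragraph $c_\beta\in D$, which is the conclusion. Since the choice of $\beta$ was arbitrary, in fact \emph{every} $c_\beta$ lies in $D$; the statement only asks for existence.

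I do not expect a serious obstacle here: the lemma is genuinely standard, and the only slightly delicate point is the reduction from ``non-meager Borel'' to ``comeager in a clopen set,'' which uses the Baire property of Borel sets and the Banach–Mazur/Baire category theorem applied inside $V$ (so that the sequence of dense open sets witnessing comeagerness is coded in $V$). One should take a little care that all objects — the Borel code, the clopen set $[s]$, the sets $U_m$ — are in $V$, so that the density arguments witnessing $c_\beta\in U_m$ can be carried out by conditions in the forcing; but this is automatic once we start from a ground-model Borel code. The proof is therefore essentially: (i) non-meager Borel $\Rightarrow$ comeager in some $[s]$ via a code in $V$; (ii) single Cohen real over $V$ meets every ground-model dense open set, hence lies in the comeager $G_\delta$; (iii) $\mathit{Fin}(\lambda\times\omega,2)$ adds, at each coordinate $\beta$, a Cohen real over $V$.
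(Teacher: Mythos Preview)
Your overall strategy matches the paper's: reduce via the Baire property to $D$ being comeager in some basic clopen $[s]$, then use that each $c_\beta$ is Cohen over $V$ (your product-lemma observation is correct and is exactly what the paper uses implicitly). However, the step ``after translating we may assume $D$ is comeager in $2^\omega$'' is not a valid reduction, and the consequent claim that \emph{every} $c_\beta$ lies in $D$ is false. For instance, take $D=[s]$ for a nontrivial $s\in 2^{<\omega}$: this is clopen, non-meager, and coded in $V$, yet for a fixed $\beta$ nothing forces $c_\beta$ to extend $s$. A Cohen real lies in every ground-model \emph{comeager} set, not in every ground-model non-meager Borel set; translating $D$ changes the target, and the translate of $c_\beta$ is no longer one of the $c_\gamma$'s.

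The paper closes this gap by a density argument in which $\beta$ varies with the condition: given any $p\in\mathit{Fin}(\lambda\times\omega,2)$, pick $\beta<\lambda$ with $(\{\beta\}\times\omega)\cap\mathrm{dom}(p)=\emptyset$, and extend $p$ to $q=p\cup r$ where $r$ forces $c_\beta\upharpoonright|s|=s$. Then $q\Vdash c_\beta\in[s]$; since $[s]\setminus D$ is meager and coded in $V$ while $c_\beta$ is Cohen over $V$, also $q\Vdash c_\beta\notin[s]\setminus D$, hence $q\Vdash c_\beta\in D$. Genericity then yields some $\beta$ with $c_\beta\in D$ in $V[G]$. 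The missing ingredient in your sketch is precisely this: one must let $\beta$ depend on the condition so as to be free to steer $c_\beta$ into $[s]$, rather than fixing $\beta$ in advance.
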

\begin{proof}
 	Since $D$ is non-meager, there exist $s\in 2^{<\omega}$ such that $D\cap [s]$ is comeager in $[s]$, i.e., $[s]\setminus D$ is meager.
 (Recall that $[s]=\{z\in 2^\w:z\uhr|s|=s\}$.) Fix $p\in Fin(\lambda\times \omega, 2)$ and  $\beta\in\lambda $ such that $\mathit{dom}(p)\cap
 (\{\beta\}\times \omega)=\emptyset$. Let $q=p\cup r$, where $dom(r)=\{\beta\}\times |s|$ and $r(\beta,j)=s(j)$ for every $j\in |s|$. 
 Then 
 $$q\Vdash c_\beta \in [s] \wedge  c_\beta  \mbox{ lies in every comeager set coded in } V,$$
and hence  $q$ also forces $ c_\beta \in D\cap [s]$.
Now the statement of the lemma follows by the density argument. 
\end{proof}
\smallskip

\noindent\textit{Proof of Proposition~\ref{prop: Bre}.}   
	We will prove it by induction on $k$. For $k=0$ there is nothing to prove. Assuming that it is true for any natural number  $\leq k$, we will prove our statement for $k+1$. Consider ${\mathcal B}_n=\{B_i^n:i\in\omega\}\in\B(X^{k+1})$, for every $n\in\omega$. Let $A\in[\lambda]^\omega$ be such that $\langle \langle B^n_i: i\in\omega \rangle:n\in\omega\rangle$ is coded in $V[\{c_\alpha:\alpha\in A\}]$. Let $\omega=I_0\sqcup I_1$ be a partition into two infinite disjoint sets. The set $(2^\omega)^{k+1}\setminus \bigcup {\mathcal B}_n$ is meager in $(2^\omega)^{k+1}$ for every $n\in\omega$. Indeed, suppose  that contrary to our claim there exists $n\in\omega$ such that $K:=(2^\omega)^{k+1}\setminus \bigcup {\mathcal B}_n$ is 
non-meager. Then Lemma~\ref{lem:coh_in_nonmeag} implies that there exists 
an injective sequence $\la \beta_i:i<k+1\ra$ of ordinals in $\lambda\setminus A$
such that $\la c_{\beta_i}:i<k+1\ra\in K$, which is impossible because 
$\B_n$ covers $X^{k+1}$.

%%%%%%%%%%%%%%%%%%%%%%%%%%%%%%%%%%%%%%%%%%%%%%%%%%%%%%%%%%%%%%%%%

	For every $n\in I_0$ pick $i_n\in\omega$ such that $\bigcup_{n\in I_0} B^n_{i_n}$ is comeager in $(2^\omega)^{k+1}$. This could be done in
 $V[\{c_\alpha:\alpha\in A\}]$ as follows: Given   an enumeration $\{\vec{s}_n:n\in I_0\}$ of $(2^{<\omega})^{k+1}$, let $i_n$ be such that $B^n_{i_n}\cap [\vec{s}_n]$ is non-meager in $[\vec{s}_n]$, $n\in I_0$, where $[\vec{s}_n]=\prod_{j\leq k}[s_j^n]$. Then the union $\bigcup_{n\in I_0} B^n_{i_n}$ is comeager in $(2^\omega)^{k+1}$,
 because its intersection with each clopen subset of $(2^\omega)^{k+1}$ is non-meager. Fix any mutually different $\alpha_0,...,\alpha_k\in \lambda\setminus A$. Then $\langle c_{\alpha_0},..., c_{\alpha_k}\rangle\in \bigcup_{n\in I_0}B^n_{i_n}$ because any such $\langle c_{\alpha_0},..., c_{\alpha_k}\rangle$ lies in every comeager subset of $(2^\omega)^{k+1}$ coded in $V[c_\alpha: \alpha\in A]$. From the above we conclude that 
 $$Y:=X^{k+1}\setminus \bigcup_{n\in I_0} B^n_{i_n}\subset \big\{\langle c_{\alpha_0},..., c_{\alpha_k}\rangle: \exists j\leq k \: (\alpha_j\in A) \vee \exists j_1,j_2\leq k\: (\alpha_{j_1}=\alpha_{j_2})\big\}.$$
Thus $Y$ is covered by a  countable union of homeomorphic copies of $X^j$ with $j\leq k$, 
hence by our assumption we can conclude the proof by covering $Y$ with 
suitably chosen $B^n_{i_n}$'s for $n\in I_1$. 
\hfill $\Box$
\medskip

Combining the results above and the fact that 
$\mathit{cov}(\M)=\hot d=\hot c=\lambda $ after adding $\lambda$-many Cohen reals to a ground model of GCH, where 
$\lambda$ is a  cardinal of uncountable cofinality, we get a consistent non-trivial example of
a R-{\it nw}-selective\ space which is also a non-trivial example of
a M-{\it nw}-selective\ space.

%%%%%%%%%%%%%%%%%%%%%%%%%%%%%%%%%%%%%%%%%%%%%%%%%

\begin{corollary} \label{cor:cohens}
	Suppose that   GCH holds in the ground model $V$. Let
$\lambda$ be a  cardinal of uncountable cofinality and   $G,X$   such as in Proposition~\ref{prop: Bre}.
 Finally, in $V[G]$ let $Y\subset C_p(X,2)$ be a countable dense subspace. Then 
 $Y$ is a R-{\it nw}-selective\ (and hence M-{\it nw}-selective) and $w(Y)=\mathit{cov}(\M)=\hot d$. 
\end{corollary}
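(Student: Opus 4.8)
The plan is to assemble Corollary~\ref{cor:cohens} from the pieces already in place, verifying only the cardinal-arithmetic bookkeeping. First I would invoke Proposition~\ref{prop: Bre} to see that in $V[G]$ every finite power $X^n$ satisfies $S_1(\B,\B)$, which is exactly the hypothesis of Theorem~\ref{thm:suff_rnw}. Applying that theorem to the countable dense subspace $Y\subseteq C_p(X,2)$ gives at once that $Y$ is R-{\it nw}-selective\ and that $w(Y)=|X|=\lambda$ (here the density of $Y$ is used precisely to upgrade $\chi(Y)\le|X|$ to $\chi(Y)=\chi(C_p(X,2))=|X|$). Since every R-{\it nw}-selective\ space is M-{\it nw}-selective\ by the implications recalled after the definitions (from \cite{BG}), the ``and hence M-{\it nw}-selective'' clause is automatic.

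Next I would pin down the cardinal characteristics in $V[G]$. The poset $\mathit{Fin}(\lambda\times\w,2)$ is the standard measure-algebra-free Cohen forcing adding $\lambda$ Cohen reals; it is ccc and of size $\lambda$ (using $\lambda^{<\w}=\lambda$ and GCH in $V$), so it preserves cardinals and forces $\hot c=\lambda$. A density/genericity argument — of the same flavour as Lemma~\ref{lem:coh_in_nonmeag} — shows that the $\lambda$ Cohen reals cover $2^\w$ modulo meager sets coded along countable subsets of $\lambda$, hence $\cov(\M)\ge\lambda$; combined with $\cov(\M)\le\hot c=\lambda$ this yields $\cov(\M)=\lambda$. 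For $\hot d$: the standard fact that Cohen forcing adds unbounded but not dominating reals, together with a chain-condition counting argument over a ground model of GCH, gives $\hot d=\lambda$ in $V[G]$ (every function in $\w^\w$ in the extension is added by a countable subposet, so no family of size $<\lambda$ of ground-model-coded reals can be dominating — while $\hot d\le\hot c=\lambda$). Thus $w(Y)=|X|=\lambda=\cov(\M)=\hot d$, which is the displayed conclusion; in particular $w(Y)=\cov(\M)$ witnesses that $Y$ is a non-trivial example in the sense of the paper.

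The only genuinely non-routine point is the computation $\cov(\M)=\hot d=\lambda$ in the Cohen extension, and even this is classical. I would either cite the standard treatment (e.g.\ \cite{BJ}) or sketch it as follows: $\cov(\M)=\lambda$ because any $<\lambda$ meager sets lie in some $V[\{c_\alpha:\alpha\in A\}]$ with $A\in[\lambda]^{<\lambda}$ (in fact countable union suffices for each, and one takes the union of the supports), and a Cohen real over that model avoids all of them, so they cannot cover $\IR$; the reverse inequality is the trivial $\cov(\M)\le\hot c$. For $\hot d=\lambda$: given $<\lambda$ reals $f_\xi\in\w^\w$ in $V[G]$, each is coded in $V[\{c_\alpha:\alpha\in A_\xi\}]$ for some countable $A_\xi$, so all of them live in $V[\{c_\alpha:\alpha\in A\}]$ for $A$ of size $<\lambda$; since the remaining $\lambda$-many Cohen reals are Cohen over that intermediate model and Cohen reals are not dominated by any ground-model real, some Cohen real eventually escapes all $f_\xi$, so the family is not dominating; hence $\hot d\ge\lambda$, and $\hot d\le\hot c=\lambda$ finishes it. I expect the write-up to be short: the substantive work is entirely in Theorem~\ref{thm:suff_rnw} and Proposition~\ref{prop: Bre}, and this corollary is their harvest.
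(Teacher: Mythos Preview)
Your proposal is correct and follows exactly the route the paper takes: the paper itself gives no proof of this corollary beyond the sentence preceding it, which says the result is obtained by combining Theorem~\ref{thm:suff_rnw} and Proposition~\ref{prop: Bre} with the well-known fact that $\cov(\M)=\hot d=\hot c=\lambda$ after adding $\lambda$ Cohen reals over a model of GCH (for $\cf(\lambda)>\w$). Your sketch of the cardinal-characteristic computations is more detailed than what the paper supplies and is standard; nothing is missing.
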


The above corollary motivates the following question, which is related to Question~\ref{q2}.

\begin{question} \label{q3}
	Is there a consistent example of a countable R-{\it nw}-selective\ space of weight $>\mathit{cov}(\M)$?
\end{question}

%%%%%%%%%%%%%%%%%%%%%%%%%%%%%%%%%%%%%%%%%%%%%%%%%%%%%%%%%%%%%%%%%%%%%%%%%%%%%%%%%%%
%%

Theorem~\ref{thm:suff_rnw}, Proposition~\ref{prop: Bre} and Lemma~\ref{lem:coh_in_nonmeag}
have their counterparts for random reals, with ``Cohen''  and ``meager''
replaced with ``random'' and ``measure zero'', respectively.
We omit proofs which are completely analogous, i.e., those of Theorem~\ref{thm:suff_hnw} and Lemma~\ref{lem:ran_in_nonzero}.

Again, this approach of using random reals could be traced back in some sense to
\cite{Bre96} and hence  may be thought of as folklore. We refer 
the reader to \cite[Section~3.1]{BJ} for more information about the random forcing.

\begin{theorem} \label{thm:suff_hnw}
	Suppose that $X\subseteq 2^\omega$ is such that $X^n$ is $S_1(\B_\Gamma,\B_\Gamma)$ for every $n\in\omega$. 
  Let $Y\subseteq C_p(X,2)$ be a countable  subset. Then $Y$ is H-{\it nw}-selective. \ Moreover, if 
$Y$ is  
  dense,
  $w(Y)=|X|$.
\end{theorem}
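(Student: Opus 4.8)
The plan is to mimic the proof of Theorem~\ref{thm:suff_rnw} almost verbatim, replacing the Rothberger selection principle $S_1(\B,\B)$ with its $\gamma$-version $S_1(\B_\Gamma,\B_\Gamma)$ and correspondingly the ``guessing on an infinite set'' combinatorics with ``eventually different / eventually dominating''-style combinatorics coming from the Hurewicz property. First I would fix an enumeration $\{y_j:j\in\omega\}$ of $Y$ and countable networks ${\mathcal N}_k=\{N_m^k:m\in\omega\}$ for each $k\in\omega$. As before, for a basic clopen set $[\vec{x},\vec\epsilon]$ with $\vec x\in X^n$, $\vec\epsilon\in 2^n$, and $j\in\omega$, I would set $A_{j,\vec\epsilon}=\{\vec x\in X^n : y_j\in[\vec x,\vec\epsilon]\}$ and define the Borel map $h_{j,\vec\epsilon}\colon A_{j,\vec\epsilon}\to\omega^\omega$ by $h_{j,\vec\epsilon}(\vec x)(k)=\min\{m:y_j\in N_m^k\subseteq[\vec x,\vec\epsilon]\}$.

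Next, I would invoke hereditarity of $S_1(\B_\Gamma,\B_\Gamma)$ (again \cite[Theorem~13]{SchTsa02}, which is stated for all these Borel selection principles) to conclude $A_{j,\vec\epsilon}$ has the property, and then the Borel-image preservation result \cite[Theorem~14]{SchTsa02} to conclude that $h_{j,\vec\epsilon}[A_{j,\vec\epsilon}]\subseteq\omega^\omega$ is Hurewicz. Forming $R:=\bigcup_{n\in\omega,\,j\in\omega,\,\vec\epsilon\in 2^n}h_{j,\vec\epsilon}[A_{j,\vec\epsilon}]$, a countable union of Hurewicz subsets of $\omega^\omega$ is Hurewicz (the class is countably additive), so $R$ is Hurewicz. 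The key combinatorial translation is: a subset of $\omega^\omega$ is Hurewicz iff for every sequence $\langle I_k:k\in\omega\rangle$ of finite subsets of $\omega$ with $I_k\to\infty$ there is (equivalently) a ``slalom'' argument — more precisely, I would use the known characterization that $R$ Hurewicz implies there is a single function $\varphi\colon\omega\to[\omega]^{<\omega}$ with $|\varphi(k)|\le k$ (or any fixed growth) such that for every $r\in R$, $r(k)\in\varphi(k)$ for all but finitely many $k$. From such a $\varphi$ one builds finite subfamilies ${\mathcal F}_k=\{N_m^k : m\in\varphi(k)\}\subset{\mathcal N}_k$.

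Finally I would verify that $\langle{\mathcal F}_k:k\in\omega\rangle$ witnesses H-{\it nw}-selectivity of $Y$: given a basic clopen $U=[\vec x,\vec\epsilon]$ and a point $y_j\in U$, we have $\vec x\in A_{j,\vec\epsilon}$, so $h_{j,\vec\epsilon}(\vec x)\in R$, hence there is $\kappa$ such that for all $k\ge\kappa$, $h_{j,\vec\epsilon}(\vec x)(k)\in\varphi(k)$, and therefore $N^k_{h_{j,\vec\epsilon}(\vec x)(k)}\in{\mathcal F}_k$ satisfies $y_j\in N^k_{h_{j,\vec\epsilon}(\vec x)(k)}\subseteq U$; this is exactly the ``for every $x\in X$ and open $U\ni x$ there is $\kappa$ such that for every $k\ge\kappa$ some $A\in{\mathcal F}_k$ has $x\in A\subseteq U$'' clause, and it suffices to check it on basic clopen sets since they form a base. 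The ``moreover'' clause about $w(Y)=|X|$ when $Y$ is dense is identical to the one in Theorem~\ref{thm:suff_rnw} and does not use the selection hypothesis at all: $|X|\ge w(Y)\ge\chi(Y)=\chi(C_p(X,2))=|X|$.

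The main obstacle, and the only place requiring care, is pinning down exactly which combinatorial consequence of ``$R\subseteq\omega^\omega$ is Hurewicz'' to use, so that it produces finite sets ${\mathcal F}_k$ (rather than singletons, as in the Rothberger case) while still giving the ``all but finitely many $k$'' (rather than ``infinitely many $k$'') conclusion demanded by the definition of H-{\it nw}-selective. The correct tool is the bounded-slalom characterization of the Hurewicz property for subsets of $\omega^\omega$ (equivalently, that a Hurewicz set is contained in a $\sigma$-compact subset of $\omega^\omega$, i.e.\ is dominated by some $g\in\omega^\omega$ up to finitely many coordinates and lies in a fixed compact-like slalom): taking $\varphi(k)=\{0,1,\dots,g(k)\}$ where $g$ is a ``bounding'' function for the (Hurewicz, hence bounded) set $R$ already works, since then ${\mathcal F}_k=\{N^k_m:m\le g(k)\}$ is finite and every $r\in R$ satisfies $r(k)\le g(k)$, i.e.\ $r(k)\in\varphi(k)$, for all but finitely many $k$. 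This is precisely parallel to, and no harder than, Proposition~\ref{prp:h-card}; the rest is bookkeeping identical to Theorem~\ref{thm:suff_rnw}.
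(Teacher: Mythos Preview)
Your proposal is correct and is exactly the analogy the paper has in mind: the authors explicitly omit the proof of Theorem~\ref{thm:suff_hnw} as ``completely analogous'' to that of Theorem~\ref{thm:suff_rnw}, and you have spelled out precisely that analogy, with the Hurewicz-set-in-$\omega^\omega$ boundedness characterization (so $\mathcal F_k=\{N^k_m:m\le g(k)\}$) replacing the Rothberger ``guessing infinitely often'' step. The intermediate slalom formulation you mention is unnecessary; the simple $\leq^*$-bound $g$ for $R$ that you settle on in the last paragraph is both sufficient and the cleanest route.
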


\begin{proposition}    \label{prop: Bre_random}
	Let $X=\{r_\alpha: \alpha<\lambda\}$ be the  set of  generic random reals over the ground model $V$
added by the standard poset $B(\lambda)=\mathit{Bor}(2^{\lambda\times\w})/\mathcal Z_\lambda$, where 
$\mathcal Z_\lambda$ is the ideal of subsets of $2^{\lambda\times\w}$ which have measure $0$ with respect to the 
usual product probability measure on $2^{\lambda\times\w}$. Let also 
$G$ be $B(\lambda)$-generic over $V$ giving rise to $X$.
 Then in $V[G]$, for any $k\in\omega$ and a sequence $\langle {\mathcal B}_n: n\in\omega\rangle $ of  Borel $\gamma$-covers of $X^k$, for each $n\in\omega$ there is $B_n\in{\mathcal B}_n$ such that 
 $\{B_n:n\in\w\}$ is a $\gamma$-cover of
 $X^k$. I.e., all finite power of $X$
 satisfy $S_1(\B_\Gamma,\B_\Gamma)$.
 \end{proposition}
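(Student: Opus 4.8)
The plan is to follow the scheme of Proposition~\ref{prop: Bre}, using the random-real counterpart of Lemma~\ref{lem:coh_in_nonmeag} (i.e.\ Lemma~\ref{lem:ran_in_nonzero}) in place of the Cohen one, but to replace the step in which $\w$ is split among the countably many ``degenerate'' pieces of $X^k$ — harmless for covers, fatal for $\gamma$-covers — by a single choice extracted with the help of the Borel--Cantelli lemma. First I would fix $k$, a sequence $\la\mathcal B_n:n\in\w\ra$ of countable Borel $\gamma$-covers of $X^k$, say $\mathcal B_n=\{B^n_i:i\in\w\}$, and a set $A\in[\lambda]^\w$ such that $\la\la B^n_i:i\in\w\ra:n\in\w\ra$ is coded in $M:=V[\{r_\alpha:\alpha\in A\}]$. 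Since $V[G]$ is a random extension of $M$ adding the reals $r_\alpha$, $\alpha\in\lambda\sm A$, for pairwise distinct $\beta_0,\dots,\beta_{d-1}\in\lambda\sm A$ the tuple $\la r_{\beta_0},\dots,r_{\beta_{d-1}}\ra$ is random over $M$ in $(2^\w)^d$ and so lies outside every Lebesgue-null Borel subset of $(2^\w)^d$ coded in $M$; dually, every Borel subset of $(2^\w)^d$ of positive measure coded in $M$ contains such a tuple (this is the $M$-version of Lemma~\ref{lem:ran_in_nonzero}). This is the only place the forcing enters.

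The next step is to stratify $X^k$ by ``type'': to a tuple $\bar x=\la r_{\alpha_0},\dots,r_{\alpha_{k-1}}\ra$ one attaches the record of which of its coordinates coincide and which ones have index in $A$ (listing that index). There are only countably many types $\tau$, they partition $X^k$ into pieces $P_\tau$, and for each $\tau$ the set $Q_\tau\sbst(2^\w)^k$ of all tuples respecting the coincidences and the prescribed $A$-values of $\tau$ is a coordinate subspace coded in $M$, on which the obvious coordinate projection $\theta_\tau$ is a homeomorphism onto $(2^\w)^{d_\tau}$, where $d_\tau\le k$ is the number of ``free'' blocks of $\tau$; one checks $\theta_\tau[X^k\cap Q_\tau]=X^{d_\tau}$, while $\theta_\tau[P_\tau]=\{\la r_{\beta_0},\dots,r_{\beta_{d_\tau-1}}\ra:\beta_j\in\lambda\sm A\text{ pairwise distinct}\}$. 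Setting $\hat B^{n,\tau}_i:=\theta_\tau[B^n_i\cap Q_\tau]$ and $\hat U^{n,\tau}_i:=(2^\w)^{d_\tau}\sm\hat B^{n,\tau}_i$, the family $\{\hat B^{n,\tau}_i:i\in\w\}$ is a Borel $\gamma$-cover of $X^{d_\tau}$ coded in $M$; hence $\bigcap_N\bigcup_{i\ge N}\hat U^{n,\tau}_i$ is disjoint from $X^{d_\tau}$, is coded in $M$, and therefore null (by the $M$-version of Lemma~\ref{lem:ran_in_nonzero}: a positive-measure such set would contain a tuple $\la r_{\beta_j}\ra\in X^{d_\tau}$, a contradiction). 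Thus $\mu\big(\bigcup_{i\ge N}\hat U^{n,\tau}_i\big)\to 0$ as $N\to\infty$, and I would let $m^\tau_n\in M$ be least with $\mu\big(\bigcup_{i\ge m^\tau_n}\hat U^{n,\tau}_i\big)<2^{-n}$.

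Finally, enumerating the types as $\la\tau_\ell:\ell\in\w\ra$ (all of the above taking place in $M$), I would set $i_n:=\max\{m^{\tau_\ell}_n:\ell\le n\}$ and claim that $\{B^n_{i_n}:n\in\w\}$ is a $\gamma$-cover of $X^k$. Indeed, given $\bar x\in X^k$ of type $\tau=\tau_\ell$, the tuple $\theta_\tau(\bar x)$ is random over $M$, and for every $n\ge\ell$ we have $i_n\ge m^\tau_n$, so $\mu(\hat U^{n,\tau}_{i_n})<2^{-n}$; by the Borel--Cantelli lemma the set $\bigcap_N\bigcup_{n\ge N}\hat U^{n,\tau}_{i_n}$ — coded in $M$ — is null, so $\theta_\tau(\bar x)$ avoids it, i.e.\ $\theta_\tau(\bar x)\in\hat B^{n,\tau}_{i_n}$, equivalently $\bar x\in B^n_{i_n}$, for all but finitely many $n$. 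As $\bar x$ was arbitrary, the proof is complete; the induction on $k$ used in Proposition~\ref{prop: Bre} is thus avoided.

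The main obstacle, and the essential departure from Proposition~\ref{prop: Bre}, is that the degenerate pieces cannot be assigned to disjoint infinite blocks of indices — a $\gamma$-cover is ruined by infinitely many of its members missing a single point — so one and the same selection $\la i_n\ra$ must serve all the (countably many) pieces simultaneously. What makes this feasible, and has no category analogue (which is why the Cohen case can afford the cheaper induction), is that measure supplies the \emph{summable} bounds $2^{-n}$ uniformly, so the countably many requirements amalgamate via the single function $n\mapsto\max\{m^{\tau_\ell}_n:\ell\le n\}$. The points needing genuine care are the implication ``a Borel $\gamma$-cover of $X^{d_\tau}$ coded in $M$ has null associated $\limsup$'' and the bookkeeping of which objects are coded in the countable model $M$, so that the mutual randomness over $M$ of the tuples $\theta_\tau(\bar x)$ can be invoked.
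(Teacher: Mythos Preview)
Your proof is correct, and it takes a genuinely different route from the paper's. The paper argues by induction on $k$ and establishes the equivalent principle $U_{\mathit{fin}}(\B,\B_\Gamma)$ instead of $S_1(\B_\Gamma,\B_\Gamma)$ directly: for the generic part of $X^{k+1}$ (tuples with pairwise distinct coordinates from $\lambda\setminus A$) it uses a single Borel--Cantelli choice $\la i_n\ra$, while the residual set $Y$ of ``degenerate'' tuples is covered by countably many copies of lower powers $X^j$, $j\le k$, and handled by the inductive hypothesis; the two selections are then merged by taking $\max\{i_n,j_n\}$ inside the $U_{\mathit{fin}}$ framework. Your approach replaces the induction by a one-shot stratification into countably many types and produces a single $S_1$-selection serving all of them simultaneously, the amalgamation $i_n=\max\{m^{\tau_\ell}_n:\ell\le n\}$ being exactly what the summable bounds $2^{-n}$ allow. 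What the paper's route buys is modularity (it plugs into the Scheepers--Tsaban equivalence and parallels the Cohen argument closely); what your route buys is a direct $S_1$-witness, no appeal to the $U_{\mathit{fin}}\leftrightarrow S_1$ equivalence, and no need to reorganize the degenerate pieces recursively. Your closing remark that this has no category analogue is slightly overstated---the paper's Cohen proof does split $\omega$ and recurse, but the paper's random proof also recurses rather than splitting, so the real contrast is between your non-inductive argument and the paper's inductive one, and the enabling ingredient is indeed the quantitative control of measure.
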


The key part of the proof of Proposition~\ref{prop: Bre_random} relies on the following

\begin{lemma} \label{lem:ran_in_nonzero}
Suppose that $D\subset 2^\omega$ is a Borel non-measure zero set coded in the ground model $V$ and $G,X$ are
such as in Proposition~\ref{prop: Bre_random}. Then there exists $\beta<\lambda $ such that $r_\beta\in D$.
\end{lemma}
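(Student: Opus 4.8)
The plan is to imitate the proof of Lemma~\ref{lem:coh_in_nonmeag}, but to exploit a feature of random forcing that Cohen forcing lacks: since every positive-measure Borel subset of $2^{\lambda\times\omega}$ is a condition of $B(\lambda)$, one may impose the requirement ``the $\beta$-th random real lies in $D$'' directly, rather than approximating it by a basic clopen set. As usual, let $x_G\in 2^{\lambda\times\omega}$ denote the random generic point associated with $G$, so that $B\in G\iff x_G\in B$ for every Borel $B\subseteq 2^{\lambda\times\omega}$ coded in $V$, and $r_\alpha\in 2^\omega$ is the column $n\mapsto x_G(\alpha,n)$. By a density argument it is enough to show that
$$\mathcal E=\bigl\{q\in B(\lambda):\exists\,\beta<\lambda\ \ q\Vdash r_\beta\in D\bigr\}$$
is dense in $B(\lambda)$.

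So fix a condition, represented by a Borel set $B\subseteq 2^{\lambda\times\omega}$ with $\mu(B)>0$. First I would invoke the standard fact that any Borel (equivalently Baire) subset of the product $2^{\lambda\times\omega}$ is, modulo a null set, supported on countably many coordinates: choose a countable $S\subseteq\lambda$ so that $B$ is determined by the coordinates in $S\times\omega$, and pick $\beta\in\lambda\setminus S$. Put $\widetilde D=\{x\in 2^{\lambda\times\omega}:(n\mapsto x(\beta,n))\in D\}$, a Borel set coded in $V$; Fubini gives $\mu(\widetilde D)=\mu(D)>0$. Since $\beta\notin S$, the coordinate blocks $S\times\omega$ and $\{\beta\}\times\omega$ generate independent subalgebras of the measure algebra, so $\mu(B\cap\widetilde D)=\mu(B)\cdot\mu(\widetilde D)=\mu(B)\cdot\mu(D)>0$. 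Hence $q:=B\cap\widetilde D$ is a condition extending the given one, and because $q\subseteq\widetilde D$ we have $q\Vdash x_G\in\widetilde D$, that is, $q\Vdash r_\beta\in D$; so $q\in\mathcal E$. Thus $\mathcal E$ is dense, $G$ meets it, and in $V[G]$ there is $\beta<\lambda$ with $r_\beta\in D$.

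I do not expect a genuine obstacle here; if anything this is shorter than the Cohen case, since no Lebesgue density point is needed — the point is precisely that random conditions may be arbitrary positive-measure Borel sets. The only steps that need a word of justification are the two routine measure-theoretic facts used above (countable support of Borel subsets of $2^{\lambda\times\omega}$ modulo null, and independence of disjoint coordinate blocks), which together legitimise the computation $\mu(B\cap\widetilde D)=\mu(B)\mu(D)$. If one preferred to stay closer to Lemma~\ref{lem:coh_in_nonmeag}, one could instead pick via Lebesgue density a finite $s$ with $\mu(D\cap[s])>\tfrac12\,\mu([s])$ and work below the condition ``$r_\beta\uhr|s|=s$'', but this detour is unnecessary.
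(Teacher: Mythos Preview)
Your argument is correct. The paper does not actually give a proof of this lemma: it explicitly says ``We omit proofs which are completely analogous, i.e., those of Theorem~\ref{thm:suff_hnw} and Lemma~\ref{lem:ran_in_nonzero}.'' So there is nothing to compare your proof against beyond the Cohen-real template of Lemma~\ref{lem:coh_in_nonmeag}, and your write-up is exactly the natural random-forcing instantiation of that template: pick $\beta$ outside the (countable) support of the given condition, and strengthen the condition so as to force $r_\beta\in D$. Your observation that random forcing makes the strengthening step \emph{cleaner} than in the Cohen case---because a positive-measure Borel set is already a condition, so one may intersect with $\widetilde D$ directly rather than passing through a density point---is accurate, and your computation $\mu(B\cap\widetilde D)=\mu(B)\mu(D)$ via disjoint-support independence is the right way to justify it. The alternative route you sketch at the end (finding $s$ with $\mu(D\cap[s])>\tfrac12\mu([s])$) would indeed be an unnecessary detour, and in fact would still need to be completed by an intersection argument of the same kind, since relative measure $>\tfrac12$ in $[s]$ does not by itself force membership in $D$.
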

\smallskip

\noindent\textit{Proof of Proposition~\ref{prop: Bre_random}.}   
	We will prove it by induction on $k$
 that $X^k$ satisfies $U_{\mathit{fin}}(\B,\B_\Gamma)$, which is equivalent to
 $S_1(\B_\Gamma,\B_\Gamma)$ by \cite[Theorem~1]{SchTsa02}.

 For $k=0$ there is nothing to prove. Assuming that it is true for any natural number  $\leq k$, we will prove our statement for $k+1$. Consider ${\mathcal B}_n=\{B_i^n:i\in\omega\}\in\B(X^{k+1})$, for every $n\in\omega$. Let $A\in[\lambda]^\omega$ be such that $\langle \langle B^n_i: i\in\omega \rangle:n\in\omega\rangle$ is coded in $V[\{r_\alpha:\alpha\in A\}]$.  The set $(2^\omega)^{k+1}\setminus \bigcup {\mathcal B}_n$ has measure $0$ in $(2^\omega)^{k+1}$ for every $n\in\omega$. Indeed, suppose  that contrary to our claim there exists $n\in\omega$ such that $L:=(2^\omega)^{k+1}\setminus \bigcup {\mathcal B}_n$ has positive measure. Then Lemma~\ref{lem:ran_in_nonzero} implies that there exists 
an injective sequence $\la \beta_i:i<k+1\ra$ of ordinals in $\lambda\setminus A$
such that $\la r_{\beta_i}:i<k+1\ra\in L$, which is impossible because 
$\B_n$ covers $X^{k+1}$.

%%%%%%%%%%%%%%%%%%%%%%%%%%%%%%%%%%%%%%%%%%%%%%%%%%%%%%%%%%%%%%%%%

	In $V[\{r_\alpha:\alpha\in A\}]$, for every $n\in \w$ pick $i_n\in\omega$ such that 
 $\nu(\bigcup_{i\leq i_n}B^n_i)\geq 1-\frac{1}{2^n}$ and note that
 $\nu(Z)=1$, where $\nu$ is the Lebesgue measure on $(2^\w)^{k+1}$ and
 $$Z=\bigcup_{m\in\w}\bigcap_{n\geq m}\bigcup_{i\leq i_n}B^n_i.$$
   Fix any mutually different $\alpha_0,...,\alpha_k\in \lambda\setminus A$. Then $\langle r_{\alpha_0},..., r_{\alpha_k}\rangle\in Z$ because any such $\langle r_{\alpha_0},..., r_{\alpha_k}\rangle$ lies in every measure $1$ subset of $(2^\omega)^{k+1}$ coded in $V[r_\alpha: \alpha\in A]$. From the above we conclude that 
 $$Y:=X^{k+1}\setminus Z\subset \big\{\langle r_{\alpha_0},..., r_{\alpha_k}\rangle: \exists j\leq k \: (\alpha_j\in A) \vee \exists j_1,j_2\leq k\: (\alpha_{j_1}=\alpha_{j_2})\big\}.$$
Thus $Y$ is covered by a  countable union of homeomorphic copies of $X^j$ with $j\leq k$, 
hence by our assumption we can find
$\la j_n:n\in\w\ra\in \w^\w$ such that $\{\bigcup_{i\leq j_n}B^n_i:n\in\w\}\in\B_\Gamma(Y)$. Since
$\{\bigcup_{i\leq i_n}B^n_i:n\in\w\}\in\B_\Gamma(Z)$ by the choice of
$\la i_n:n\in\w\ra$, we conclude that
$$\big\{\bigcup_{i\leq \max\{i_n,j_n\}}B^n_i:n\in\w\big\}\in\B_\Gamma(Z\cup Y)=\B_\Gamma(X^{k+1}),$$
which completes our proof. 
\hfill $\Box$
\medskip

Combining the results above and the fact that 
$\hot d=\w_1$ after adding $\lambda$-many random reals to a ground model of GCH, 
we get a consistent non-trivial example of
a H-{\it nw}-selective\ space which is also a non-trivial example of
a M-{\it nw}-selective\ space. 

\begin{corollary} \label{cor:randoms}
	Suppose that   GCH holds in the ground model $V$ and
$\lambda$ is  an uncountable cardinal.  Let  $G,X$  be such as in Proposition~\ref{prop: Bre_random}.
 Finally, in $V[G]$ let $Y\subset C_p(X,2)$ be a countable dense subspace. Then   $Y$ is  H-{\it nw}-selective\ (and hence M-{\it nw}-selective) and $w(Y)=\lambda=\hot c\geq\hot d=\w_1$. 
\end{corollary}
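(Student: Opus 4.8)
The plan is to obtain the statement by assembling Proposition~\ref{prop: Bre_random}, Theorem~\ref{thm:suff_hnw}, and a couple of standard facts about the measure algebra $B(\lambda)$. First I would work in $V[G]$ and record that $X=\{r_\alpha:\alpha<\lambda\}\subseteq 2^\w$ has size $\lambda$ (a routine density argument shows $r_\alpha\neq r_\beta$ for $\alpha\neq\beta$) and that, by Proposition~\ref{prop: Bre_random}, every finite power $X^n$ satisfies $S_1(\B_\Gamma,\B_\Gamma)$. Next I would note that a countable dense $Y\subseteq C_p(X,2)$ indeed exists: $X$ is separable metrizable, so $C_p(X,\IR)$ has a countable network and is therefore hereditarily separable, whence its subspace $C_p(X,2)$ is separable. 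Theorem~\ref{thm:suff_hnw} then applies directly and yields that $Y$ is H-{\it nw}-selective\ and, $Y$ being dense, $w(Y)=|X|=\lambda$. Finally, H-{\it nw}-selectivity implies M-{\it nw}-selectivity essentially by unwinding the definitions: given a sequence $\la\mathcal N_n:n\in\w\ra$ of countable networks, finite sets $\mathcal F_n\subset\mathcal N_n$ witnessing the H-{\it nw}-selection also make $\bigcup_{n\in\w}\mathcal F_n$ a network, since for each $x$ and each open $U\ni x$ there is (taking $n=\kappa$) some $A\in\mathcal F_n$ with $x\in A\subseteq U$; compare \cite{BG} and the implication diagram in the introduction.

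It then remains to compute $\hot c$ and $\hot d$ in $V[G]$. Here I would use that $B(\lambda)$ is ccc, hence preserves cardinals and cofinalities, and is $\w^\w$-bounding (see \cite[Section~3.1]{BJ}): the latter means $(\w^\w)^V$ stays dominating, and since GCH gives $\hot d=\hot c=\w_1$ in $V$, we get $\hot d=\w_1$ in $V[G]$. For the continuum I would use that each Borel subset of $2^{\lambda\times\w}$ involves only countably many coordinates, so $|B(\lambda)|=\lambda^{\aleph_0}$, which under GCH equals $\lambda$ provided $\cf(\lambda)>\w$ (the same proviso as in Corollary~\ref{cor:cohens}; if $\cf(\lambda)=\w$ then $\hot c$ becomes $\lambda^+$ and the equality $w(Y)=\hot c$ should be read accordingly). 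Combined with the fact that $\lambda$-many pairwise distinct reals are added and a nice-name count, this gives $\lambda\leq\hot c\leq|B(\lambda)|^{\aleph_0}=\lambda$, i.e.\ $\hot c=\lambda$. Putting things together, $w(Y)=|X|=\lambda=\hot c$, and $\hot c=\lambda\geq\w_1=\hot d$ because $\lambda$ is uncountable.

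The corollary carries essentially no independent difficulty once Proposition~\ref{prop: Bre_random} and Theorem~\ref{thm:suff_hnw} are granted: the genuine work — showing that the finite powers of a set of random reals satisfy $S_1(\B_\Gamma,\B_\Gamma)$, and transferring this through the Scheepers--Tsaban transfer theorems \cite{SchTsa02} to $\mathit{nw}$-selectivity of a countable $Y\subseteq C_p(X,2)$ — is already absorbed there. The only points that need any care here are the forcing-theoretic facts about $B(\lambda)$ ($\w^\w$-boundedness and the size of the algebra) and the remark on separability of $C_p(X,2)$; the former are standard, and the latter is routine. This is exactly parallel to the way Corollary~\ref{cor:cohens} is deduced from Theorem~\ref{thm:suff_rnw} and Proposition~\ref{prop: Bre}.
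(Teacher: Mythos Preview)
Your proposal is correct and follows exactly the route the paper intends: the paper states the corollary without a separate proof, prefacing it only with ``Combining the results above and the fact that $\hot d=\w_1$ after adding $\lambda$-many random reals to a ground model of GCH\ldots'', so the argument is precisely to feed Proposition~\ref{prop: Bre_random} into Theorem~\ref{thm:suff_hnw} and then quote the standard forcing facts about $B(\lambda)$, just as you do. Your observation about the cofinality of $\lambda$ (needed for $\hot c=\lambda$ rather than $\lambda^+$) is a point the paper glosses over here, though it was explicit in the parallel Corollary~\ref{cor:cohens}.
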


The  corollary above among others shows that the counterparts of  Question~\ref{q3} for H-{\it nw}-selective\ and M-{\it nw}-selective\ spaces
have affirmative answers, i.e., there are consistent examples of countable
H-{\it nw}-selective\ (resp.  M-{\it nw}-selective) spaces with weight $>\hot b$ (resp. $>\hot d$).

%%%%%%%%%%%%%%%%%%%%%%%%%%%%%%%%%%%%%%%%%%%%%%%%%%%%%%%%%%%%%%%%%%%%%%%%%%%%%%%%%%%%%%%%%%

\section{Various kinds of $\mathit{nw}$-selectivity of \lq\lq standard\rq\rq countable dense  subspaces of $C_p(X,2)$: necessary conditions.} \label{sec:non-triv_necc}

In this section we establish some limitations to
constructions of non-trivial examples by the
methods developed in Section~\ref{sec:non-triv_suff}.
More precisely, we consider certain specific countable dense subspaces
of $C_p(X)$ defined before Theorem~\ref{thm:ness_c_p_Hnw}, where $X$ is a metrizable separable spaces, and show
that these having $\mathit{nw}$-selectivity properties implies 
$X$ having quite strong combinatorial covering properties with respect to the family of all countable closed covers.

We start by showing that the properties we consider are equivalent to their local counterparts 
in the realm of countable spaces. We call $\mathcal N$ a \emph{network for a space $X$ at $x\in X$,} if for every neighbourhood $U\ni x$ there exists $N\in\NN$ with $x\in N\in\NN$.
Thus,  $\NN$ is a  network for $X$ if and only if it is  a network for $X$ at each $x\in X$.

\begin{definition} \label{def:loc}\rm
A space $X$ is 
\begin{itemize}
\item
\emph{locally M-{\it nw}-selective}, if for every $x\in X$ and sequence $\la\NN_m:m\in\w\ra$
of networks for $X$ at $x$, there exists a sequence
$\la \mathcal L_m:m\in\w\ra$ such that $\mathcal L_m\in [\NN_m]^{<\w}$ for all $m$
and $\bigcup_{m\in\w}\mathcal L_m$ is a networks for $X$ at $x$;
\item
\emph{locally H-{\it nw}-selective}, if  for every $x\in X$ and sequence $\la\NN_m:m\in\w\ra$
of networks for $X$ at $x$, there exists a sequence
$\la \mathcal L_m:m\in\w\ra$ such that $\mathcal L_m\in [\NN_m]^{<\w}$ for all $m$
and $\bigcup_{m\in I}\mathcal L_m$ is a networks for $X$ at $x$ for any $I\in [\w]^\w$;
\item
\emph{locally R-{\it nw}-selective}, if  for every $x\in X$ and sequence $\la\NN_m:m\in\w\ra$
of networks for $X$ at $x$, there exists a sequence
$\la N_m:m\in\w\ra$ such that $N_m\in \NN_m$ for all $m$
and $\{N_m:m\in\w\}$ is a networks for $X$ at $x$.
\end{itemize}
\end{definition}

\begin{lemma} \label{lem:loc_char}
\begin{enumerate}
\item If $X$ is locally M-{\it nw}-selective \ (resp. H-{\it nw}-selective, R-{\it nw}-selective) and $|X|=\w$, 
then it is  M-{\it nw}-selective \ (resp. H-{\it nw}-selective, R-{\it nw}-selective).
\item If $X$ is M-{\it nw}-selective \ (resp. H-{\it nw}-selective, R-{\it nw}-selective), then it is locally M-{\it nw}-selective \ (resp. H-{\it nw}-selective, R-{\it nw}-selective).
\end{enumerate}
\end{lemma}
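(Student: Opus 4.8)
The plan is to treat the two implications separately; each amounts to converting one form of $\mathit{nw}$-selectivity into the other by a small combinatorial manipulation of networks. Throughout, as is implicit in Definition~\ref{def:loc} under the standing hypothesis $nw(X)=\w$, all networks considered are countable; note that in a space with a countable network, countable networks at a point always exist. I also record the (trivial) reformulation of the H-conditions that I will use: a sequence $\la\mathcal L_m:m\in\w\ra$ has the property that $\bigcup_{m\in I}\mathcal L_m$ is a network for $X$ at $x$ for every $I\in[\w]^\w$ \emph{iff} for every open $U\ni x$ the set $\{m\in\w:\exists A\in\mathcal L_m\;(x\in A\sbst U)\}$ is cofinite; the same equivalence, with $x$ ranging over $X$, describes the H-$\mathit{nw}$-selectivity of $X$ itself.

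\emph{Proof of (2).} Suppose $X$ is M-$\mathit{nw}$-selective, fix $x\in X$ and a sequence $\la\NN_m:m\in\w\ra$ of networks for $X$ at $x$. Fix a countable network $\NN$ for the whole of $X$ and put $\NN'=\{N\sm\{x\}:N\in\NN\}$. Since any witness $N\in\NN$ for a neighbourhood of a point $y\ne x$ still works after deleting $x$, the family $\NN'$ is a network for $X$ at each point $\ne x$; hence $\NN_m\cup\NN'$ is a countable network for $X$ for every $m$. Applying M-$\mathit{nw}$-selectivity to $\la\NN_m\cup\NN':m\in\w\ra$ we get $\mathcal F_m\in[\NN_m\cup\NN']^{<\w}$ with $\bigcup_m\mathcal F_m$ a network for $X$; set $\mathcal L_m=\mathcal F_m\cap\NN_m$. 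For an open $U\ni x$ some $\mathcal F_m$ contains $N$ with $x\in N\sbst U$, and since members of $\NN'$ omit $x$ we must have $N\in\NN_m$, so $N\in\mathcal L_m$. Thus $\bigcup_m\mathcal L_m$ is a network at $x$, as required. The H-case is analogous, using the reformulation above (if $\mathcal F_m$ contains a witness for $(x,U)$ then so does $\mathcal L_m$); in the R-case global R-$\mathit{nw}$-selectivity produces $N_m\in\NN_m\cup\NN'$ with $\{N_m:m\in\w\}$ a network for $X$, and replacing each $N_m$ lying in $\NN'$ by an arbitrary element of $\NN_m$ does not destroy the witnesses at $x$ (those lie in $\NN_m$), so the new sequence witnesses local R-$\mathit{nw}$-selectivity at $x$.

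\emph{Proof of (1).} Write $X=\{x_k:k\in\w\}$ and let $\la\NN_n:n\in\w\ra$ be a sequence of countable networks for $X$; each $\NN_n$ is in particular a network for $X$ at every $x_k$. For the M- and R-cases fix a partition $\w=\bigsqcup_{k\in\w}I_k$ into infinite pieces. For each $k$ apply local M- (resp.\ R-) $\mathit{nw}$-selectivity at $x_k$ to $\la\NN_n:n\in I_k\ra$, obtaining $\mathcal L_n\in[\NN_n]^{<\w}$ (resp.\ $N_n\in\NN_n$) for $n\in I_k$ with $\bigcup_{n\in I_k}\mathcal L_n$ (resp.\ $\{N_n:n\in I_k\}$) a network at $x_k$; declare $\mathcal F_n=\mathcal L_n$ (resp.\ keep $N_n$) for $n\in I_k$. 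Then for every $k$ the set $\bigcup_n\mathcal F_n$ (resp.\ $\{N_n:n\in\w\}$) contains a network at $x_k$, hence is a network for $X$. The H-case cannot be handled by a partition --- H-$\mathit{nw}$-selectivity demands that \emph{every} large enough index $n$ supply a witness for \emph{every} pair $(x_k,U)$, whereas after a partition an index $n\in I_j$ is controlled only at $x_j$. Instead, for each $k$ apply local H-$\mathit{nw}$-selectivity at $x_k$ to the whole sequence $\la\NN_n:n\in\w\ra$, getting $\mathcal L^k_n\in[\NN_n]^{<\w}$ such that for every open $U\ni x_k$ the set $\{n:\exists A\in\mathcal L^k_n\;(x_k\in A\sbst U)\}$ is cofinite; then put $\mathcal F_n=\bigcup_{k\le n}\mathcal L^k_n\in[\NN_n]^{<\w}$. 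Fixing $x_k$ and an open $U\ni x_k$, the set of $n$ admitting a witness in $\mathcal L^k_n$ is cofinite and for every such $n\ge k$ that witness lies in $\mathcal F_n$; therefore the set of $n$ admitting a witness in $\mathcal F_n$ is cofinite, i.e.\ $X$ is H-$\mathit{nw}$-selective.

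The two points where the argument has to deviate from the obvious are exactly where I expect the real work to lie: in (2), the device of \emph{crippling} the auxiliary network $\NN$ at $x$ (passing to $\NN'$), which is what forces every witness at $x$ to survive the restriction $\mathcal F_m\mapsto\mathcal F_m\cap\NN_m$; and in (1), the observation that the H-case is \emph{not} amenable to a partition of $\w$ and must instead be done by superposing the local selections at $x_0,\dots,x_n$ into the $n$-th finite family, with the cofiniteness reformulation of H-$\mathit{nw}$-selectivity doing the bookkeeping. The remaining verifications are routine.
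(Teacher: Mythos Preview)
Your proof is correct and follows the same approach as the paper. For item~(2) you use exactly the paper's key device---replacing a global network $\NN$ by $\{N\setminus\{x\}:N\in\NN\}$ so that any witness at $x$ must come from the given local networks---and for item~(1) the paper simply declares the implication ``rather obvious'' without further comment, whereas you supply the details (including the correct observation that the H-case requires superposing local selections via $\mathcal F_n=\bigcup_{k\le n}\mathcal L^k_n$ rather than partitioning $\w$).
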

\begin{proof}
The first item is rather obvious. 
For  the second one it suffices to note that
if $\mathcal M$ is a (countable) network for $X$ and $\NN$ is a (countable)  network for $X$ at $x\in X$, then 
$$\NN\cup\{M\setminus\{x\}:M\in\mathcal M\}$$
is a (countable) network for $X$.
\end{proof}

In what follows we shall call a sequence  $\la\U_n:n\in\w\ra $
of finite families of subsets of $X$ a \emph{$\gamma_{\mathit{fs}}$-sequence\footnote{``fs''
is the abbreviation of \emph{finite subsets}} on $X$},
if for every $F\in [X]^{<\w}$ there exists $n\in\w$ such that for all $k\geq n$ there exists 
$U\in\U_n$ containing $F$. 

For a subset $A$ of $X$ the \emph{characteristic function of $A$} is  $\chi_A:X\to 2$ such that $\chi_A(x)=0$ iff $x\in A$.
Let $X$ be a metrizable separable zero-dimensional space
and $\CS$ a base for $X$ closed under finite unions and 
complements of its elements.
Then we denote by $Y_{\CS}$ the set 
$\{\chi_S:S\in\CS\}\subset C_p(X,2)$.

\begin{theorem} \label{thm:ness_c_p_Hnw}
Let $X$ be a metrizable separable zero-dimensional space and $\CS$ a  countable clopen base of
$X$ closed under finite unions and complements. 
If $Y=Y_{\CS}$ is H-{\it nw}-selective\  as a subspace of $C_p(X,2)$, then 
for every sequence $\la\C_n:n\in\w\ra$ of countable closed $\w$-covers of $X$
there exists a $\gamma_{\mathit{fs}}$-sequence $\la\D_n:n\in\w\ra$ on $X$ such that 
$\D_n\in[\C_n]^{<\w}$.
\end{theorem}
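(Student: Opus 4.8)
The plan is to pass to the \emph{local} form of H-{\it nw}-selectivity and then to encode each closed $\w$-cover $\C_n$ as a network for $Y$ \emph{at one carefully chosen point}, in such a way that a Hurewicz-type selection of finite subfamilies reads off exactly as a $\gamma_{fs}$-sequence.

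First note that, since $\CS$ is closed under finite unions and complements, $X=S\cup(X\setminus S)\in\CS$ for any $S\in\CS$, so the constant function $0$ — which is $\chi_X$ — belongs to $Y$. A basic neighbourhood of $\chi_X$ in $Y$ has the form $[\vec{x},\vec{0}]\cap Y=\{\chi_S:S\in\CS,\ F\subseteq S\}$, where $\vec 0$ is the all-zero tuple and $F=\{x_0,\dots,x_{m-1}\}$ ranges over finite subsets of $X$ (using $\chi_S(x_i)=0\Leftrightarrow x_i\in S$); thus neighbourhoods of $\chi_X$ correspond precisely to the finite subsets of $X$ over which a $\gamma_{fs}$-sequence quantifies. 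The one elementary fact I will need is that, because $\CS$ is a \emph{base} closed under complements, every closed $C\subseteq X$ satisfies $C=\bigcap\{S\in\CS:C\subseteq S\}$: given $x\notin C$, choose $T\in\CS$ with $x\in T\subseteq X\setminus C$, so $X\setminus T\in\CS$ contains $C$ but misses $x$.

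For a closed $C\subseteq X$ I put $N_C=\{\chi_S:S\in\CS,\ C\subseteq S\}\subseteq Y$. Then $\chi_X\in N_C$ always, and by the fact above $N_C\subseteq[\vec{x},\vec{0}]$ iff $F\subseteq C$. Hence for each $n$ the family $\NN_n=\{N_C:C\in\C_n\}$ is a network for $Y$ at $\chi_X$: shrink a given neighbourhood of $\chi_X$ to some $[\vec{x},\vec{0}]$, use that $\C_n$ is an $\w$-cover to find $C\in\C_n$ with $F\subseteq C$, and observe $\chi_X\in N_C\subseteq[\vec{x},\vec{0}]$. (Crucially, $\NN_n$ is \emph{not} a network for the whole of $Y$ — it cannot refine neighbourhoods $[\vec x,\vec\epsilon]$ with some $\epsilon_i=1$ — which is exactly why I must work locally.) By Lemma~\ref{lem:loc_char}(2), $Y$ is locally H-{\it nw}-selective; applying this at $\chi_X$ to $\langle\NN_n:n\in\w\rangle$ yields finite $\F_n\in[\NN_n]^{<\w}$ such that $\bigcup_{n\in I}\F_n$ is a network for $Y$ at $\chi_X$ for every $I\in[\w]^\w$. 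Since $C\mapsto N_C$ is injective (again by $C=\bigcap\{S\in\CS:C\subseteq S\}$), I set $\D_n=\{C\in\C_n:N_C\in\F_n\}\in[\C_n]^{<\w}$.

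It then remains to check that $\langle\D_n:n\in\w\rangle$ is a $\gamma_{fs}$-sequence. If not, I fix a finite $F\subseteq X$ and an infinite $I\subseteq\w$ with $F\not\subseteq C$ for all $n\in I$ and all $C\in\D_n$, let $\vec{x}$ enumerate $F$, and consider the neighbourhood $[\vec{x},\vec{0}]$ of $\chi_X$. Every element of $\bigcup_{n\in I}\F_n$ is $N_C$ for some $C\in\D_n$ with $n\in I$; since $F\not\subseteq C$ we get $N_C\not\subseteq[\vec{x},\vec{0}]$, so $\bigcup_{n\in I}\F_n$ fails to be a network at $\chi_X$ — contradicting the choice of the $\F_n$. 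The only genuinely delicate point I anticipate is the design of the encoding itself: a naive attempt using networks for the whole of $Y$ would allow the selected $\F_n$ to be trivial, so the crux is to localize at $\chi_X$ and exploit the identity $C=\bigcap\{S\in\CS:C\subseteq S\}$; the remaining work is routine manipulation of basic open sets in $C_p(X,2)$ together with the standard reformulation of the Hurewicz-type clause as ``$\bigcup_{n\in I}\F_n$ is a network at $x$ for every infinite $I$''.
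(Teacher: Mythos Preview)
Your proof is correct and essentially identical to the paper's: your set $N_C=\{\chi_S:S\in\CS,\ C\subseteq S\}$ is exactly the paper's $[C,0]\cap Y$, and both arguments localize at $\chi_X=0$ via Lemma~\ref{lem:loc_char}, encode each $\C_n$ as a network at $0$, and derive the $\gamma_{\mathit{fs}}$-conclusion by the same contradiction. You make explicit the equivalence $N_C\subseteq[\vec{x},\vec{0}]\Leftrightarrow F\subseteq C$ (and the injectivity of $C\mapsto N_C$) where the paper only spells out the direction needed at the end, but the substance is the same.
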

\begin{proof}
   Note that the constant $0$ function (which we denote by $0$)
  belongs to $Y$: Given any $S\in \CS$, we have that $X\setminus S\in\CS$, and
  hence $X=S\cup (X\setminus S)\in\CS$, which yields $0=\chi_X\in Y$.
  
    For every $C\subset X$ we denote by $[C,0]$ the set $\{f\in C_p(X):f\uhr C\equiv 0\}$.
  Let $\la\C_n:n\in\w\ra$ be a sequence of countable closed $\w$-covers of $X$.
  It is easy to check  that 
  $$\NN_n:=\big\{ [C,0]\cap Y:C\in\C_n \big\}$$
  is a network for $Y$ at $0$ for every $n\in\w$. By Lemma~\ref{lem:loc_char} we know that
  $Y$ is locally H-{\it nw}-selective, and hence there exists a sequence 
  $\la\D_n:n\in\w\ra $ such that   $\D_n\in[\C_n]^{<\w}$ for all $n\in\w$
  and 
  $$\NN:=\{[C,0]\cap Y:C\in\D_n,n\in I\}$$
  is a network for $Y$ at $0$ for any infinite $I\subset\w$. 
  
  We claim that 
$\la\D_n:n\in\w\ra $ is a $\gamma_{\mathit{fs}}$-sequence of subsets of $X$. Indeed, suppose towards a contradiction, that  there exists 
$I\in[\w]^\w$ and $A\in [X]^{<\w}$ such that $A\not\subset C$ for any $C\in\bigcup_{n\in I}\D_n$. Set $O=[A,0]\cap Y$ and note that $O$ is an open neighbourhood of $0$ in $Y$.
Thus there exists $n\in I$ and $C\in \D_n$ such that $O\supset [C,0]\cap Y$.
However, there exists $x\in A\setminus C$, and hence   there exists $S\in\CS$
with\footnote{Here we use that $Y=Y_{\CS}$ and not just arbitrary countable dense subset of $C_p(X,2)$.} 
$C\subset S$ and $x\not\in S$. It follows that $\chi_S\in [C,0]\cap Y$
and $\chi_S\not\in [A,0]\cap Y=O$, which gives the desired contradiction.  
\end{proof}

For a topological space $X$ we make the following notation:
\begin{itemize}
\item $\C(X)$ is the family of all
countable  closed   covers of $X$;
\item $\C_\Omega(X)$ is the family of all
countable  closed   $\w$-covers of $X$;
\item $\C_\Gamma(X)$ is the family of all
countable  closed   $\gamma$-covers of $X$;
\item $\C^o(X)$ is the family of all
countable  clopen   covers of $X$;
\item $\C^o_\Omega(X)$ is the family of all
countable  clopen   $\w$-covers of $X$;
\item $\C^o_\Gamma(X)$ is the family of all
countable  clopen   $\gamma$-covers of $X$.
\end{itemize}

 Recall from \cite{KocSch03} that 
 a countable family $\U$ of subsets of $X $ is  \emph{$\w$-groupable},
 if there exists a sequence $\la\D_n:n\in\w\ra$ of mutually disjoint finite subsets of $\U$ such that the set  $\{n\in\w:x\not\in\cup\D_n\}$ is 
 finite for all $x\in X$. 
 We extend our list of notation for specific covers of a space $X$ as follows:
 \begin{itemize}
\item $\C_{\wgp}(X)$ is the family of all
  closed $\w$-groupable  covers of $X$;
\item $\C^o_{\wgp}(X)$ is the family of all
  clopen $\w$-groupable  covers of $X$;
  \item $\B_{\wgp}(X)$ is the family of all
  Borel $\w$-groupable  covers of $X$.
\end{itemize}

%
%%%%%%%%%%%%%%%%%%%%%%%%%%%%%%%%%%%%%%%%%%%%%%%%%%%%%%%%%%%%%%%%%%%%%%%%%%

\begin{corollary}  \label{cor:ness_c_p_Hnw}
Let $X$ be a metrizable separable zero-dimensional space and $\CS$ a  countable clopen base of
$X$ closed under finite unions and complements. 
If $Y=Y_{\CS}$ is H-{\it nw}-selective\  as a subspace of $C_p(X,2)$, then 
all finite powers of
$X$  satisfy $U_{\mathit{fin}}(\Op,\Gamma)$ (i.e., are Hurewicz)
and 
$X$ satisfies $U_{\mathit{fin}}(\B,\B_\Gamma)$ (i.e., is Hurewicz with respect to all countable Borel covers), which is equivalent to $S_1(\B_\Gamma,\B_\Gamma)$.
\end{corollary}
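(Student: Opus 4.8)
The plan is to derive Corollary~\ref{cor:ness_c_p_Hnw} from Theorem~\ref{thm:ness_c_p_Hnw} together with the characterizations of the Hurewicz-type properties in terms of $\omega$-groupable covers from \cite{KocSch03} and \cite{SchTsa02}. First I would observe that a $\gamma_{\mathit{fs}}$-sequence $\la\D_n:n\in\w\ra$ with $\D_n\in[\C_n]^{<\w}$ is essentially the data witnessing that a single closed $\omega$-cover can be refined to an $\omega$-groupable one: if we start with one countable closed $\omega$-cover $\C$ of $X$ and set $\C_n=\C$ for all $n$, then the $\gamma_{\mathit{fs}}$-sequence we obtain from Theorem~\ref{thm:ness_c_p_Hnw} has the property that for every finite $F\subseteq X$ there is $n$ such that for all $k\geq n$ some $U\in\D_k$ contains $F$. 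After disjointifying (replace $\D_n$ by $\D_n\setminus\bigcup_{m<n}\D_m$, which at worst deletes finitely many of the $\D_n$'s and does not destroy the $\gamma_{\mathit{fs}}$ property on infinite tails), taking $F$ to range over singletons shows $\{n:x\notin\bigcup\D_n\}$ is finite for every $x$, so $\bigcup_n\D_n\subseteq\C$ is an $\omega$-groupable closed cover refining $\C$. Hence $X$ satisfies $\mathsf{S}_1(\C_\Omega,\C_{\wgp})$, which is one of the standard reformulations of the Hurewicz property relative to closed covers.

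Next I would upgrade this from a statement about one cover to the full covering property. Since $X$ is metrizable separable zero-dimensional and $\CS$ is a clopen base closed under finite unions and complements, every countable open $\omega$-cover of $X$ (or of a finite power $X^k$, using that finite powers of such $X$ are again metrizable separable zero-dimensional) can be refined by a countable clopen $\omega$-cover, and clopen sets are closed; so the closed-cover selection principle we just established, applied cover-by-cover along a sequence $\la\C_n:n\in\w\ra$ of countable closed $\omega$-covers, yields exactly a $\gamma_{\mathit{fs}}$-sequence, i.e.\ $X$ satisfies $\mathsf{U}_{\mathit{fin}}(\C,\C_\Gamma)$ in the appropriate sense. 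By \cite[Theorem~1]{SchTsa02}, $\mathsf{U}_{\mathit{fin}}(\B,\B_\Gamma)$ is equivalent to $\mathsf{S}_1(\B_\Gamma,\B_\Gamma)$; and for metrizable separable $X$ the closed-cover and Borel-cover versions of the Hurewicz property coincide (every countable Borel cover of such a space has a countable closed, even $F_\sigma$, refinement, so one can run the selection through closed covers and then re-absorb), giving the $\mathsf{U}_{\mathit{fin}}(\B,\B_\Gamma)$ clause. For the statement that all finite powers of $X$ are Hurewicz in the ordinary sense $\mathsf{U}_{\mathit{fin}}(\Op,\Gamma)$, I would argue that $Y_{\CS}$ being H-{\it nw}-selective\ implies the same kind of local selection at $0$ works when we start from clopen $\omega$-covers of $X^k$ for each fixed $k$: encode a clopen $\omega$-cover of $X^k$ inside networks at $0$ in $Y$ by using basic neighbourhoods $[\vec x,\vec 0]$ with $\vec x\in X^k$, exactly as in the proof of Theorem~\ref{thm:ness_c_p_Hnw} but with tuples instead of single points, and read off a $\gamma_{\mathit{fs}}$-sequence on $X^k$; this gives $X^k\models\mathsf{U}_{\mathit{fin}}(\Op,\Gamma)$ since clopen $\omega$-covers are cofinal among open $\omega$-covers here, and $\mathsf{U}_{\mathit{fin}}(\Op,\Gamma)$ is known (Just--Miller--Scheepers--Szeptycki) to follow from its $\omega$-cover version.

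The main obstacle I anticipate is the bookkeeping in the second paragraph: making precise that the local selection at the single point $0\in Y$ faithfully transfers to a selection principle on each finite power $X^k$, rather than only on $X$ itself. Theorem~\ref{thm:ness_c_p_Hnw} as stated only delivers $\gamma_{\mathit{fs}}$-sequences on $X$; to get all finite powers I need to rerun its proof with neighbourhoods of $0$ of the form $[\{(x^1),\dots,(x^m)\},0]$ coding finite subsets of $X^k$ via the clopen sets $S_1\times\cdots\times S_k$ (products of base elements), and check that the crucial separation step — ``there exists $S\in\CS$ with $C\subseteq S$, $x\notin S$'' — has a power analogue, namely that for a clopen $C\subseteq X^k$ and a point $\vec x\notin C$ one can separate them by a clopen box, which holds because clopen boxes form a base for $X^k$. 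Once that is in place the rest is a matter of invoking \cite{KocSch03,SchTsa02} and the classical equivalences. I would also double-check the disjointification step does not silently lose finitely many coordinates in a way that breaks the $\gamma_{\mathit{fs}}$ condition, but since that condition is genuinely about cofinal (tail) behaviour in $n$, deleting finitely many $\D_n$ is harmless.
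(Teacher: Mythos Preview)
Your proposal has two genuine gaps. The first is in the Borel step: the claim that ``every countable Borel cover of such a space has a countable closed, even $F_\sigma$, refinement'' is false. In $2^\omega$, the cover $\{[\omega]^{<\omega},[\omega]^\omega\}$ has no countable closed refinement, since $[\omega]^\omega$ is a dense $G_\delta$ that is not $F_\sigma$. The passage from closed-cover Hurewicz to Borel-cover Hurewicz is a nontrivial theorem, not a refinement trick; the paper invokes \cite[Theorem~5.2]{BRR} for exactly this. Note also that the paper reaches $U_{\mathit{fin}}(\C,\C_\Gamma)$ more directly than you do: from any $\gamma_{\mathit{fs}}$-sequence $\la\D_n\ra$ one immediately has $\{\cup\D_n:n\in\w\}\in\C_\Gamma(X)$, with no detour through $\omega$-groupability.

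The second gap is your disjointification. Replacing $\D_n$ by $\D_n\setminus\bigcup_{m<n}\D_m$ can destroy the $\gamma_{\mathit{fs}}$ property on every tail, not merely on finitely many coordinates: if a single $C\in\C$ is the element of $\D_n$ containing a given $F$ for all large $n$, then after disjointification $C$ survives in at most one $\D'_n$ and $F$ is no longer covered cofinitely. The paper's remedy is different: apply Theorem~\ref{thm:ness_c_p_Hnw} not to the constant sequence but to an enumeration $\la\C'_m\ra$ in which every cofinite subset of each $\C_n$ occurs; from the resulting $\gamma_{\mathit{fs}}$-sequence one can then select an increasing subsequence $\la\D_n\ra$ with $\D_n\in[\C_n]^{<\w}$ and the $\D_n$'s pairwise disjoint (at stage $n$ choose $m$ large with $\C'_m=\C_n\setminus\bigcup_{i<n}\D_i$), and subsequences of $\gamma_{\mathit{fs}}$-sequences remain $\gamma_{\mathit{fs}}$. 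With disjointness in hand, $\bigcup_n\D_n$ is $\omega$-groupable, so $X$ satisfies $S_{\mathit{fin}}(\C^o_\Omega,\C^o_{\wgp})$, and \cite[Theorem~16]{KocSch03} gives that all finite powers of $X$ are Hurewicz --- so your plan to rerun Theorem~\ref{thm:ness_c_p_Hnw} on $X^k$, along with the separation obstacle you flag, is entirely avoided.
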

\begin{proof}
 Let $\la\C_n:n\in\w\ra$  be a sequence of countable closed $\w$-covers of $X$.
 Theorem~\ref{thm:ness_c_p_Hnw} yields a 
 a $\gamma_{\mathit{fs}}$-sequence $\la\D_n:n\in\w\ra$ on $X$ such that 
$\D_n\in[\C_n]^{<\w}$. It follows that $\{\cup\D_n:n\in\w\}\in \C_\Gamma(X)$.
Thus, $X$ satisfies $U_{\mathit{fin}}(\C_\Omega,\C_\Gamma)$, which is 
obviously equivalent to $U_{\mathit{fin}}(\C,\C_\Gamma)$, i.e., the
 Hurewicz covering property with respect to countable closed 
covers.  \cite[Theorem~5.2]{BRR} implies that 
$X$ satisfies the Hurewicz property with respect to countable Borel covers,
which is  equivalent to $S_1(\B_\Gamma,\B_\Gamma)$
by \cite[Theorem~1]{SchTsa02}.

Finally, to see that all finite powers of $X$ are Hurewicz, note that Theorem~\ref{thm:ness_c_p_Hnw} implies that for every 
sequence $\la\C_n:n\in\w\ra$ of countable closed $\w$-covers of $X$
there exists a $\gamma_{\mathit{fs}}$-sequence $\la\D_n:n\in\w\ra$ on $X$ such that 
$\D_n\in[\C_n]^{<\w}$ and $\D_{n_0}\cap \D_{n_1}=\emptyset$
for any natural numbers $n_0\neq n_1$. Indeed, for this it is enough 
replace $\la\C_n:n\in\w\ra$ with a sequence $\la\C'_m:m\in\w\ra$ of countable closed $\w$-covers of $X$ such that for every cofinite subset $C$ of some $\C_n$,  there exists $m\in\w$ with $\C=\C'_m$. 
Theorem~\ref{thm:ness_c_p_Hnw} implies that there exists a $\gamma_{\mathit{fs}}$-sequence $\la\D'_m:m\in\w\ra$ on $X$ such that 
$\D'_m\in[\C'_m]^{<\w}$.
Now it is easy to see that one can choose a 
subsequece $\la\D_n:n\in\w\ra$ of $\la\D'_m:m\in\w\ra$
consisting of mutually disjoint elements and
such that 
$\D_n\in[\C_n]^{<\w}$. 

Using the  consequence of Theorem~\ref{thm:ness_c_p_Hnw} established in 
the paragraph above for sequences 
of countable clopen covers of $X$, and the obvious fact that if
 a $\gamma_{\mathit{fs}}$-sequence $\la\D_n:n\in\w\ra$ consists of 
mutually disjoint finite sets, then $\bigcup_{n\in\w}\D_n$ is an $\w$-groupable cover of $X$, we conclude that $X$ satisfies $S_{\mathit{fin}}(\C_\Omega,
\C_{\wgp})$, and hence also $S_{\mathit{fin}}(\C^o_\Omega,
\C^o_{\wgp})$.
By  
\cite[Theorem~16]{KocSch03}
  all finite powers of $X$ are Hurewicz.
\end{proof}

%%%%%%%%%%%%%%%%%%%%%%%%%%%%%%%%%%%%%%%%%%%%%%%%%%%%%%%%%%%%%%%%%%

By arguments 
similarly to (but easier than) those used in the proofs of
Theorem~\ref{thm:ness_c_p_Hnw} and Corollay~\ref{cor:ness_c_p_Hnw}, we can also get  necessary conditions for countable dense subsets of $C_p(X)$
of the form $Y_\B$
to be
M-{\it nw}-selective\  and R-{\it nw}-selective.

\begin{theorem} \label{thm:ness_c_p_Mnw}
Let $X$ be a metrizable separable zero-dimensional space and $\CS$ a  countable clopen base of
$X$ closed under finite unions and complements. 
If $Y=Y_{\CS}$ is M-{\it nw}-selective\  as a subspace of $C_p(X,2)$, then  $X$
satisfies $S_{\mathit{fin}}(\C_\Omega,\C_\Omega)$.
\end{theorem}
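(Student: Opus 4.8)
The plan is to mirror the proof of Theorem~\ref{thm:ness_c_p_Hnw}, which is simpler here because we only need to control the single union $\bigcup_n\D_n$ of the selected finite subfamilies, not all of its infinite sub-unions. First I would record, exactly as in that proof, that the constant function $0=\chi_X$ belongs to $Y=Y_\CS$, since $\CS$ is closed under complements (so $X=S\cup(X\setminus S)\in\CS$ for any $S\in\CS$). Then, given a sequence $\la\C_n:n\in\w\ra$ of countable closed $\w$-covers of $X$ and writing $[C,0]=\{f\in C_p(X,2):f\uhr C\equiv 0\}$ for $C\subseteq X$, I would set $\NN_n:=\{[C,0]\cap Y:C\in\C_n\}$ and check that each $\NN_n$ is a countable network for $Y$ at $0$: a basic neighbourhood of $0$ in $Y$ has the form $[A,0]\cap Y$ with $A\in[X]^{<\w}$, and since $\C_n$ is an $\w$-cover there is $C\in\C_n$ with $A\subseteq C$, whence $0\in[C,0]\cap Y\subseteq[A,0]\cap Y$.

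Next, since $Y$ is M-{\it nw}-selective, Lemma~\ref{lem:loc_char}(2) gives that $Y$ is locally M-{\it nw}-selective; applying this to $\la\NN_n:n\in\w\ra$ produces finite sets $\D_n\in[\C_n]^{<\w}$ such that $\{[C,0]\cap Y:C\in\D_n,\ n\in\w\}$ is a network for $Y$ at $0$. It then remains to verify that $\bigcup_{n\in\w}\D_n$ is an $\w$-cover of $X$; it is automatically a countable family of closed sets. Given $A\in[X]^{<\w}$, the set $[A,0]\cap Y$ is an open neighbourhood of $0$, so there are $n\in\w$ and $C\in\D_n$ with $[C,0]\cap Y\subseteq[A,0]\cap Y$, and I would argue $A\subseteq C$. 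If not, pick $x\in A\setminus C$; since $C$ is closed and $\CS$ is a clopen \emph{base}, there is $U\in\CS$ with $x\in U\subseteq X\setminus C$, so $S:=X\setminus U\in\CS$ satisfies $C\subseteq S\not\ni x$. Then $\chi_S\in Y$ and $\chi_S\uhr C\equiv 0$, hence $\chi_S\in[C,0]\cap Y$, but $\chi_S(x)=1$, so $\chi_S\notin[A,0]$ — contradicting the inclusion. Thus $A\subseteq C\in\bigcup_n\D_n$, and since $A$ was arbitrary, $\bigcup_n\D_n$ is a countable closed $\w$-cover of $X$, witnessing $S_{\mathit{fin}}(\C_\Omega,\C_\Omega)$ for the sequence $\la\C_n:n\in\w\ra$.

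I do not anticipate a genuine obstacle: the argument is a straightforward simplification of the one for H-{\it nw}-selectivity. The two points that require mild care are the verification that the $\NN_n$ are networks at $0$, which is exactly where the hypothesis that the $\C_n$ are $\w$-covers (rather than merely covers) is used, and the final containment $A\subseteq C$, which is where one genuinely uses that $Y$ is the specific space $Y_\CS$ and not an arbitrary countable dense subspace of $C_p(X,2)$: the presence of the characteristic functions $\chi_S$ with $S\in\CS$ is precisely what rules out $A\not\subseteq C$.
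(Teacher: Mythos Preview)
Your proposal is correct and follows exactly the approach the paper indicates: it says that Theorem~\ref{thm:ness_c_p_Mnw} is obtained ``by arguments similarly to (but easier than) those used in the proofs of Theorem~\ref{thm:ness_c_p_Hnw}'', and your write-up is precisely that adaptation, with the simplification that only the single union $\bigcup_n\D_n$ needs to be an $\w$-cover rather than a $\gafs$-sequence. Your explicit construction of $S\in\CS$ with $C\subset S\not\ni x$ (via a basic $U\in\CS$ around $x$ inside $X\setminus C$ and then $S=X\setminus U$) spells out the step the paper leaves implicit in the proof of Theorem~\ref{thm:ness_c_p_Hnw}.
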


\begin{corollary}  \label{cor:ness_c_p_Mnw}
Let $X,\CS$ be such as in Theorem~\ref{thm:ness_c_p_Mnw}. 
If $Y=Y_{\CS}$ is M-{\it nw}-selective\  as a subspace of $C_p(X,2)$, then all finite powers of
$X$ are Menger, and also $X$ has the Menger property with respect to countable closed covers.
\end{corollary}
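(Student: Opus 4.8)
The plan is to imitate the proof of Corollary~\ref{cor:ness_c_p_Hnw}, with the simplification that, Menger being the ``unrestricted'' member of the trio, no groupability is needed anywhere. The input is the conclusion of Theorem~\ref{thm:ness_c_p_Mnw}, namely that $X$ satisfies $S_{\mathit{fin}}(\C_\Omega,\C_\Omega)$; from this single fact I would read off both assertions of the corollary.

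For the Menger property of $X$ with respect to countable closed covers, i.e.\ $S_{\mathit{fin}}(\C,\C)$, I would use the routine implication $S_{\mathit{fin}}(\C_\Omega,\C_\Omega)\Rightarrow S_{\mathit{fin}}(\C,\C)$: given a sequence $\la\C_n:n\in\w\ra$ of countable closed covers of $X$, pass to the countable closed $\w$-covers $\C'_n=\{\bigcup\mathcal F:\mathcal F\in[\C_n]^{<\w}\}$, apply $S_{\mathit{fin}}(\C_\Omega,\C_\Omega)$ to obtain finite $\mathcal F'_n\sbst\C'_n$ with $\bigcup_{n\in\w}\mathcal F'_n$ a closed $\w$-cover (hence in particular a cover) of $X$, and then unpack each member of $\mathcal F'_n$, which is a finite union of members of $\C_n$, into those finitely many members; this produces $\mathcal F_n\in[\C_n]^{<\w}$ with $X=\bigcup_{n\in\w}\bigcup\mathcal F_n$.

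For the assertion that all finite powers of $X$ are Menger I would argue as follows. Applying $S_{\mathit{fin}}(\C_\Omega,\C_\Omega)$ to sequences of countable \emph{clopen} $\w$-covers of $X$ (a clopen $\w$-cover is in particular a countable closed $\w$-cover, and the finite subfamilies produced by $S_{\mathit{fin}}(\C_\Omega,\C_\Omega)$ still consist of clopen sets) shows that $X$ satisfies $S_{\mathit{fin}}(\C^o_\Omega,\C^o_\Omega)$. Now $X$ and all its finite powers are metrizable separable, hence Lindel\"of, and $X$ is zero-dimensional; consequently every open $\w$-cover of $X$ has a countable $\w$-subcover (a family $\U$ is an $\w$-cover of $X$ precisely when $\{U^n:U\in\U\}$ covers the Lindel\"of space $X^n$ for every $n$), and such a countable $\w$-cover is in turn refined by a countable clopen $\w$-cover. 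Transporting the selections furnished by $S_{\mathit{fin}}(\C^o_\Omega,\C^o_\Omega)$ backwards along these two refinements yields $S_{\mathit{fin}}(\Omega,\Omega)$ for $X$, and by the well-known fact that $S_{\mathit{fin}}(\Omega,\Omega)$ holds if and only if all finite powers of $X$ have the Menger property, this is exactly the statement that $X^n$ is Menger for every $n\in\w$.

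I do not expect a genuine obstacle here — this is precisely the ``easier'' case alluded to in the text. The only points that require a little care are: (i) checking that the two refinement steps (closed covers to closed $\w$-covers, and open $\w$-covers to countable clopen $\w$-covers) transport the chosen finite subfamilies correctly; and (ii) matching the closed/clopen reformulations with the standard ``open'' combinatorial characterizations of the Menger property and of its finite powers. Once Theorem~\ref{thm:ness_c_p_Mnw} is in hand, the remainder is bookkeeping with $\w$-covers in Lindel\"of zero-dimensional spaces, exactly as in the proof of Corollary~\ref{cor:ness_c_p_Hnw}.
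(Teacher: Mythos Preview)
Your proposal is correct and follows essentially the same route as the paper: from Theorem~\ref{thm:ness_c_p_Mnw} you deduce $S_{\mathit{fin}}(\C_\Omega,\C_\Omega)$, then derive the closed-cover Menger property directly and pass through $S_{\mathit{fin}}(\C^o_\Omega,\C^o_\Omega)\Leftrightarrow S_{\mathit{fin}}(\Omega,\Omega)$ to get that all finite powers are Menger. The paper's proof is the same but terser, simply citing \cite[Theorem~3.9]{JMS} for the last equivalence rather than spelling out the refinement bookkeeping you describe.
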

\begin{proof}
Clearly, $S_{\mathit{fin}}(\C_\Omega,\C_\Omega)$
implies $U_{\mathit{fin}}(\C,\C)$, i.e.,
the Menger property with respect to all countable closed covers

Also, $S_{\mathit{fin}}(\C_\Omega,\C_\Omega)$
implies $S_{\mathit{fin}}(\C^o_\Omega,\C^o_\Omega)$,
which for zero-dimensional spaces is equivalent to all finite powers
having the Menger property $U_{\mathit{fin}}(\Op,\Op)$
by \cite[Theorem~3.9]{JMS}.
\end{proof}

%%%%%%%%%%%%%%%%%%%%%%%%%%%%%%%%%%%%%%%%%%%%%%%%%%%%%%%%%%%%%%%%%%%%%

\begin{theorem} \label{thm:ness_c_p_Rnw}
Let $X,\CS$ be such as in Theorem~\ref{thm:ness_c_p_Mnw}.
If $Y=Y_{\CS}$ is R-{\it nw}-selective\  as a subspace of $C_p(X,2)$, then  $X$
satisfies $S_1(\C_\Omega,\C_\Omega)$.
\end{theorem}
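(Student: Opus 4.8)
The plan is to follow the proof of Theorem~\ref{thm:ness_c_p_Hnw} essentially verbatim, with (local) H-{\it nw}-selectivity replaced by (local) R-{\it nw}-selectivity. Since the R-selector returns a single closed set $C_n\in\C_n$ rather than a finite subfamily $\D_n\in[\C_n]^{<\w}$, the argument becomes strictly shorter: there is no $\gamma_{\mathit{fs}}$-sequence to manufacture and no infinite index set to keep track of, and the desired conclusion $S_1(\C_\Omega,\C_\Omega)$ falls out directly.

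Concretely, I would first recall, exactly as at the start of the proof of Theorem~\ref{thm:ness_c_p_Hnw}, that the constant function $0=\chi_X$ lies in $Y=Y_{\CS}$ (because $X=S\cup(X\setminus S)\in\CS$ for any $S\in\CS$), and write $[C,0]=\{f\in C_p(X,2):f\uhr C\equiv 0\}$ for a closed $C\subseteq X$. Given a sequence $\la\C_n:n\in\w\ra$ of countable closed $\w$-covers of $X$, the family $\NN_n=\{[C,0]\cap Y:C\in\C_n\}$ is a countable network for $Y$ at $0$: a basic neighbourhood of $0$ in $Y$ has the form $[A,0]\cap Y$ with $A\in[X]^{<\w}$, and any $C\in\C_n$ with $A\subseteq C$ (which exists since $\C_n$ is an $\w$-cover) yields $0\in[C,0]\cap Y\subseteq[A,0]\cap Y$. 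By Lemma~\ref{lem:loc_char}(2) the space $Y$ is locally R-{\it nw}-selective, so applying this at the point $0$ to $\la\NN_n:n\in\w\ra$ we obtain $C_n\in\C_n$, $n\in\w$, such that $\{[C_n,0]\cap Y:n\in\w\}$ is a network for $Y$ at $0$.

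The only step requiring an actual (short) argument, and the one place where the specific choice $Y=Y_{\CS}$ is used rather than an arbitrary countable dense subspace of $C_p(X,2)$ (as in the footnote to Theorem~\ref{thm:ness_c_p_Hnw}), is the verification that $\{C_n:n\in\w\}$ is an $\w$-cover of $X$; being a countable family of closed sets, it then witnesses $S_1(\C_\Omega,\C_\Omega)$ for the given sequence. Suppose not, and fix $A\in[X]^{<\w}$ with $A\not\subseteq C_n$ for all $n$. Then $O:=[A,0]\cap Y$ is an open neighbourhood of $0$ in $Y$, so by the network property there is $n$ with $0\in[C_n,0]\cap Y\subseteq O$. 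Pick $x\in A\setminus C_n$; since $C_n$ is closed and $\CS$ is a clopen base of $X$ closed under complements, there is $U\in\CS$ with $x\in U$ and $U\cap C_n=\emptyset$, so $S:=X\setminus U\in\CS$ satisfies $C_n\subseteq S$ and $x\notin S$. Then $\chi_S\in Y$ and $\chi_S\uhr C_n\equiv 0$, hence $\chi_S\in[C_n,0]\cap Y\subseteq O$, while $\chi_S(x)=1$ gives $\chi_S\notin[A,0]\cap Y=O$, a contradiction. I do not expect any genuine obstacle here: this is the easiest of the three necessity results in this section, and all of its ingredients already appear in the proof of Theorem~\ref{thm:ness_c_p_Hnw}.
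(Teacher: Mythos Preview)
Your proposal is correct and follows exactly the approach the paper intends: the paper does not give an explicit proof of this theorem, stating only that it follows ``by arguments similarly to (but easier than) those used in the proofs of Theorem~\ref{thm:ness_c_p_Hnw} and Corollary~\ref{cor:ness_c_p_Hnw}'', and your write-up is precisely that adaptation, with the finite selections replaced by single selections and the $\gamma_{\mathit{fs}}$ condition by the $\w$-cover condition.
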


\begin{corollary}  \label{cor:ness_c_p_Mnw}
Let $X,\CS$ be such as in Theorem~\ref{thm:ness_c_p_Mnw}. 
If $Y=Y_{\CS}$ is R-{\it nw}-selective\  as a subspace of $C_p(X,2)$, then all finite powers of
$X$ satisfy $S_1(\Op,\Op)$, i.e., are Rothberger, and also $X$ has the Rothberger property $S_1(\C,\C)$ with respect to countable closed covers.
\end{corollary}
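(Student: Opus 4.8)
The plan is to mirror the two-step structure used for the Menger and Hurewicz versions: first invoke Theorem~\ref{thm:ness_c_p_Rnw} to get that $X$ satisfies $S_1(\C_\Omega,\C_\Omega)$, and then extract from this both (i) the Rothberger property $S_1(\C,\C)$ with respect to countable closed covers, and (ii) the Rothberger property of all finite powers of $X$.

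Step (ii) I would handle exactly as in the Menger case. First, $S_1(\C_\Omega,\C_\Omega)$ implies $S_1(\C^o_\Omega,\C^o_\Omega)$: a sequence of countable clopen $\w$-covers is in particular a sequence of countable closed $\w$-covers, so $S_1(\C_\Omega,\C_\Omega)$ yields a selection forming a closed $\w$-cover, and since the chosen sets are clopen this selection is actually a clopen $\w$-cover. Second, for zero-dimensional metrizable separable spaces $S_1(\C^o_\Omega,\C^o_\Omega)$ is equivalent to all finite powers of $X$ having the Rothberger property $S_1(\Op,\Op)$, by the Rothberger counterpart of \cite[Theorem~3.9]{JMS}. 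This settles (ii).

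The only part that is not a verbatim copy of the Menger argument is (i), and this is where I expect the (admittedly small) difficulty. For Menger one uses $S_{\mathit{fin}}(\C_\Omega,\C_\Omega)\Rightarrow U_{\mathit{fin}}(\C,\C)$ by closing each given closed cover under finite unions; but for Rothberger the analogous move produces only \emph{finitely many} sets from each original cover, i.e.\ $S_{\mathit{fin}}(\C,\C)$, not $S_1(\C,\C)$. Instead I would use the closed-cover analogue of the classical identity $S_1(\Omega,\Op)=S_1(\Op,\Op)$. Concretely, given a sequence $\la\C_n:n\in\w\ra$ of countable closed covers of $X$, with $\C_n=\{C^n_j:j\in\w\}$, fix a partition $\w=\bigcup_{m\in\w}A_m$ into infinitely many pairwise disjoint infinite sets, write $A_m=\{a^m_0<a^m_1<\cdots\}$, and for each $m$ put
$$\E_m=\Big\{\,\bigcup_{i=0}^{r}C^{a^m_i}_{j_i}\ :\ r\in\w,\ j_0,\dots,j_r\in\w\,\Big\}.$$
Each $\E_m$ is a countable family of closed subsets of $X$, and it is an $\w$-cover: for a finite $F=\{x_0,\dots,x_{p-1}\}\subseteq X$, choosing $j_i$ with $x_i\in C^{a^m_i}_{j_i}$ (possible since each $\C_{a^m_i}$ covers $X$) gives $F\subseteq\bigcup_{i<p}C^{a^m_i}_{j_i}\in\E_m$. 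Applying $S_1(\C_\Omega,\C_\Omega)$ to $\la\E_m:m\in\w\ra$ produces $E_m=\bigcup_{i=0}^{r_m}C^{a^m_i}_{j^m_i}\in\E_m$ with $\{E_m:m\in\w\}$ a closed $\w$-cover, in particular $\bigcup_{m\in\w}E_m=X$. Reading off a single set from each $\C_n$ — for $n=a^m_i$ with $i\le r_m$ take $C^n_{j^m_i}$, and for the remaining $n$ take any member of $\C_n$ — one obtains a selection whose union contains $\bigcup_{m\in\w}E_m=X$, so $X$ satisfies $S_1(\C,\C)$.

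Thus the main obstacle is precisely this $S_1$-versus-$S_{\mathit{fin}}$ subtlety in (i): one must resist imitating the Menger proof literally and instead pass through the ``partition $\w$ into infinitely many infinite pieces'' device, which is exactly what lets an $\w$-cover hypothesis be upgraded to an arbitrary-cover conclusion while keeping the selection single-valued. A secondary, purely bibliographical, point is to pin down the exact Rothberger form of the zero-dimensional finite-powers equivalence invoked in (ii).
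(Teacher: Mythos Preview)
Your proposal is correct and follows essentially the same route as the paper. For part (i) the paper simply observes that $S_1(\C_\Omega,\C_\Omega)\Rightarrow S_1(\C_\Omega,\C)$ and then invokes the argument of \cite[Theorem~17]{Sch1} (i.e., exactly the partition-of-$\w$ device you wrote out) to get $S_1(\C,\C)$; for part (ii) the paper likewise passes through $S_1(\C^o_\Omega,\C^o_\Omega)\Leftrightarrow S_1(\Omega,\Omega)$ and then cites \cite[Lemma, p.~918]{S0} or \cite[Theorem~3.8]{JMS} (that is the ``Rothberger counterpart'' you were looking for---note it is 3.8, not 3.9).
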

\begin{proof}
By Theorem~\ref{thm:ness_c_p_Rnw} we know that 
$X$ satisfies $S_1(\C_\Omega,\C_\Omega)$, and hence it also 
satisfies satisfies $S_1(\C_\Omega,\C)$, which is equivalent
to  $S_1(\C,\C)$ by the same argument as in the proof of
\cite[Theorem~17]{Sch1}, which asserts that $S_1(\Omega,\Op)$
and $S_1(\Op,\Op)$ are equivalent.

Also, by Theorem~\ref{thm:ness_c_p_Rnw} the space 
$X$ satisfies $S_1(\C_\Omega,\C_\Omega)$, and
hence also $S_1(\C^o_\Omega,\C^o_\Omega)$, which is obviously equivalent 
to $S_1(\Omega,\Omega)$ because $X$ is zero-dimensional.
Finally, $S_1(\Omega,\Omega)$  is equivalent to all 
finite powers being Rothberger, see \cite[Lemma, p.~918]{S0} or \cite[Theorem~3.8]{JMS}.
\end{proof}

%%%%%%%%%%%%%%%%%%%%%%%%%%%%%%%%%%%%%%%%%%%%%%%%%%%%%%%%%%%%%%%%%%%%%%%%
The necessary conditions proved above motivate the following question.

\begin{question} \label{que:cl_vs_bor}
Let $X$ be a metrizable separable zero-dimensional space.
\begin{enumerate}
\item Suppose that $X$ is Menger with respect to countable closed covers.
Is $X$ Menger with respect to countable Borel covers?
\item Suppose that $X$ is Rothberger with respect to countable closed covers.
Is $X$ Rothberger with respect to countable Borel covers?
\end{enumerate}
\end{question}

As we have already mentioned in the proof of Corollary~\ref{cor:ness_c_p_Hnw},
by \cite[Theorem~5.2]{BRR} the answer to the analogous question for 
the Hurewicz property is affirmative.
Regarding the Rothberger part of Question~\ref{que:cl_vs_bor},
in the Laver model all Rothberger metrizable spaces are countable, hence 
the affirmative answer is consistent, which means that this question is interesting in models where the Borel conjecture fails, e.g., models of CH.
Below we show that also for the Menger part of Question~\ref{que:cl_vs_bor}
the affirmative answer is consistent.

\begin{proposition} \label{prp:mil_<d}
In the Miller model, if $X\subset 2^\w$ satisfies 
$U_{\mathit{fin}}(\C,\C)$, then $|X|<\hot d$, and hence $X$ satisfies the Menger property with respect to arbitrary countable covers.
\end{proposition}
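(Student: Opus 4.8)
The plan is to exploit the standard analysis of the Miller (superperfect tree) forcing iteration, combined with the classical Hurewicz/Menger dichotomy over $\w^\w$. Recall that in the Miller model $\hot d=\w_2$ while $\hot b=\w_1$, and—crucially—the iteration has the Laver property and is $\w^\w$-bounding on a club of coordinates in the sense that every new real is contained in a ground-model-coded $\sigma$-compact set; more precisely, for every $f\in\w^\w$ in the extension there is a ground model $g$ with $f\leq^* g$ unless $f$ is (essentially) a Miller real, and the Miller reals themselves are unbounded but do not add a dominating real. The key structural fact I would use is that $U_{\mathit{fin}}(\C,\C)$ (Menger with respect to countable closed covers) for $X\subseteq 2^\w$ is equivalent, via the Hurewicz-type characterization, to a statement about continuous images of $X$ in $\w^\w$: by \cite[Theorem~3.9]{JMS}-style arguments one reduces to showing that no continuous (indeed no Borel) image of $X$ in $\w^\w$ is dominating.

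First I would argue that if $|X|\geq\hot d=\w_2$, then $X$ fails $U_{\mathit{fin}}(\C,\C)$. Take $Y\subseteq X$ with $|Y|=\w_2$; by a reflection/$\Delta$-system argument on the supports, there is a single intermediate model $V_\alpha$ of the iteration (of size $<\w_2$) over which unboundedly many elements of $Y$ are "generic enough" that their canonical images under the map $z\mapsto$ (the modulus of a fixed countable closed $\w$-cover, read off coordinatewise) form a dominating family in $V[G]$. Concretely, fix a countable closed $\w$-cover witnessing failure as in the proof of Proposition~\ref{prop:not_d}: set $Y^n_k$ via a bijection $h\colon Y\to D$ with $D\subseteq\w^\w$ dominating—this is exactly where we need $|X|\geq\hot d$, since a dominating family of this size exists inside $X$. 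Then the closed $\w$-covers $\C_n=\{\overline{Y^n_k}:k\in\w\}\cup\{X\setminus Y\}$ (closures in $X$) witness the failure of $S_{\mathit{fin}}(\C_\Omega,\C_\Omega)$ hence of $U_{\mathit{fin}}(\C,\C)$, by the same computation as in Proposition~\ref{prop:not_d} (choosing finite subfamilies corresponds to bounding $g$, and some $y\in Y$ escapes). I expect the routine verification that closures do not spoil the argument—i.e.\ that $\overline{Y^n_k}\cap Y=Y^n_k$, which holds since the $Y^n_k$ are relatively clopen in $Y$ and $Y$ is relatively closed in the relevant trace—to be genuinely easy in $2^\w$.

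For the converse direction, which is the substance, I would show that in the Miller model every $X\subseteq 2^\w$ with $|X|<\hot d=\w_2$, i.e.\ $|X|\leq\w_1$, already satisfies $U_{\mathit{fin}}(\C,\C)$; in fact it satisfies the Menger property with respect to \emph{arbitrary} countable covers. This is where I invoke the last sentence's "and hence": once $|X|\leq\w_1$, by the structure of the Miller model any such $X$ of size $\leq\w_1$ appears (up to the obvious coding) in an intermediate extension, and Miller forcing does not destroy the Menger property of sets of size $\w_1$—indeed the Miller real is not dominating, so no size-$\w_1$ set becomes non-Menger in a single step, and a reflection argument handles the iteration. Alternatively, and more cleanly, one cites that $\non(\text{Menger})=\hot d$ together with the fact that in the Miller model every set of reals of size $<\hot d$ is Menger because Menger-ness for small sets is a consequence of $\mathsf{non}(\mathsf{Menger})=\hot d$—this is precisely the remark made in the introduction that "every Lindel\"of space of cardinality strictly less than $\hot d$ is Menger." So the chain is: $|X|<\hot d\Rightarrow X$ Menger $\Rightarrow$ $X$ Menger w.r.t.\ countable Borel (a fortiori closed) covers.

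The main obstacle is the first direction's reflection step: one must ensure that a dominating family of size $\w_2$ sitting inside $X$ really does produce the closed $\w$-covers exhibiting failure, and that the cover-selection game in $X$ (choosing $\D_n\in[\C_n]^{<\w}$) is faithfully captured by bounding in $\w^\w$. The delicate point is that $D$ dominating in the \emph{ambient} $\w^\w$ is what we need, and that is automatic once $|X|\geq\hot d$; so in fact there is no genuine obstacle beyond bookkeeping, and the proposition reduces to Proposition~\ref{prop:not_d} applied to closed covers plus the standard inequality $\non(\text{Menger})=\hot d$ evaluated in the Miller model. I would therefore present the proof as: (i) $|X|\geq\hot d$ fails $U_{\mathit{fin}}(\C,\C)$ by the closed-cover version of Proposition~\ref{prop:not_d}; (ii) conclude $|X|<\hot d$; (iii) invoke $\non(\text{Menger})=\hot d$ and \cite[Theorem~5.2]{BRR}-type closed-to-Borel transfer to get the stronger conclusion.
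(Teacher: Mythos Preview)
Your proposal has a genuine gap in direction (i), and the approach does not match what the paper actually does.

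The core problem is your adaptation of Proposition~\ref{prop:not_d} to closed covers. You take an arbitrary bijection $h:Y\to D$ onto a dominating family, set $Y^n_k=\{y\in Y:h(y)(n)=k\}$, and then pass to the closures $\overline{Y^n_k}$ to obtain closed covers. You assert that ``$\overline{Y^n_k}\cap Y=Y^n_k$, which holds since the $Y^n_k$ are relatively clopen in $Y$'' --- but this is simply false. The set $\{f\in\w^\w:f(n)=k\}$ is clopen in $\w^\w$, yet $h$ is an \emph{arbitrary} bijection, not a homeomorphism, so $Y^n_k=h^{-1}(\{f:f(n)=k\})$ is an arbitrary subset of $Y$ with no topological structure whatsoever. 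Once you take closures, a point $y$ with $h(y)(n)>g(n)$ for all $n$ may perfectly well lie in $\overline{Y^n_k}$ for some $k\leq g(n)$, and the escaping argument collapses. There is no evident way to repair this: you would need a \emph{continuous} (or at least Borel with closed fibers) surjection from $X$ onto a dominating set, and the hypothesis $U_{\mathit{fin}}(\C,\C)$ gives you no handle on that. Note also that if your argument worked it would be a ZFC proof that $U_{\mathit{fin}}(\C,\C)$ implies $|X|<\hot d$, which would come close to resolving the Menger part of Question~\ref{que:cl_vs_bor}; the paper leaves that open and proves the proposition only in the Miller model.

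The paper's proof proceeds along entirely different lines. It first shows (in ZFC) that $U_{\mathit{fin}}(\C,\C)$ for $X$ implies every subset $G\subset X$ is Menger, by a direct shrinking-of-covers argument. Then it invokes two results specific to the Miller model: \cite[Theorem~4.4]{MilTsaZdo14}, which says every Menger subset of $2^\w$ contained in a $G_\delta$ set $H$ is covered by $\w_1$-many compact subsets of $H$, and \cite[Lemma~2.5]{Zdo18}, a counting lemma. From hereditary Menger-ness one gets that every countable $Q\subset X$ is a relative $G_{\w_1}$ in $X$, and the counting lemma then forces $|X|\leq\w_1<\hot d$. Your proposal never reaches hereditary Menger-ness, never cites the Miller-model covering theorem, and the forcing-theoretic reflection remarks in your first paragraph are not developed and do not connect to the rest of the argument.
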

\begin{proof}
 First we shall show that any $G\subset X$ is Menger. Indeed, let
   $\la\U_n:n\in\w\ra$ be a sequence of  covers of $G$
   by open subsets of $X$. For every
   $n$ let $\A_n$ be a countable cover of $G$ by open subsets of $X$
   such that for every $A\in\A_n$ there exists $U\in\U_n$
   with $\bar{A}\subset U$,  the closure being taken in $X$.
   Set 
   $$\W_n=\{\bar{A}:A\in\A_n\}\cup\{F_n\}, \mbox{ where } F_n=X\setminus\cup\A_n,$$
   and note that $\W_n$ is a countable closed cover of $X$.
   The Menger property for countable closed covers 
   applied to $X$ yields a sequence
   $\la\W'_n:n\in\w\ra$ such that $\W'_n\in [\W_n]^{<\w}$
   and $G\subset\bigcup_{n\in\w}\cup\W'_n$. Since each $F_n$ is disjoint from $G$, we get $G\subset\bigcup_{n\in\w}\cup\W''_n$,
   where $\W''_n=\W'_n\setminus\{F_n\}$. It follows that there exists 
   a finite $\A''_n\subset\A_n$ such that $\W''_n=\{\bar{A}:A\in\A''_n\}$.
Consequently, there exists a finite $\U''_n\subset\U_n$  such that 
$$\forall A\in\A''_n\: \exists \U\in\U''_n\: (\bar{A}\subset U).$$
Putting all together, we get $G\subset\bigcup_{n\in\w}\cup\U''_n$,
and therefore $G$ is Menger\footnote{Let us note that this part did not require any assumptions beyond ZFC.}.

In the Miller model for every Menger space $Z\subset 2^\w$ and a $G_\delta$-subset $H\subset 2^\w$, if $Z\subset H$, then there is a family
$\K$ of compact subspaces of $H$ of size $|\K|\leq\w_1$ and such that
$Z\subset\cup\K$, see \cite[Theorem~4.4]{MilTsaZdo14}. As a result, if $Q\in [2^\w]^\w$
is disjoint from $Z$, then there exists a $G_{\w_1}$-subset (i.e., an intersection of $\w_1$-many open sets)  
$R$ of $2^\w$ such that $Q\subset R$ and $R\cap Z=\emptyset$.

Since $X$ is hereditarily Menger, we conclude that for every
$Q\in [X]^\w$ there exists a $G_{\w_1}$-subset $R(Q)$ with $R(Q)\cap X=Q$.
Now a direct application of \cite[Lemma~2.5]{Zdo18} gives that there exists a 
family $\mathsf Q\subset[2^\w]^\w$ of size $|\mathsf Q|=\w_1$ and such that
$$X=\bigcup_{Q\in\mathsf Q}(R(Q)\cap X)=\bigcup_{Q\in\mathsf Q}Q=\cup\mathsf Q,$$
which yields $|X|\leq \w_1<\hot d$.
Finally, the fact that any space of size $<\hot d$ has the Menger property with respect to the family of  all countable covers 
is straightforward.
\end{proof}

The next statement is a consequence of \cite[Corollary~4.4]{MilTsa10}.

\begin{proposition} \label{prp:lav_<b}
In the Laver model, if $X\subset 2^\w$ satisfies $S_1(\B_\Gamma,\B_\Gamma)$, i.e., is Hurewicz with respect to the family of 
countable Borel covers, then $|X|<\hot b$.
\end{proposition}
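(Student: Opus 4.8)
The plan is to reduce the statement directly to \cite[Corollary~4.4]{MilTsa10}, which isolates exactly this behaviour in the Laver model. The first step is to record the translation of the hypothesis: as already used in the proof of Corollary~\ref{cor:ness_c_p_Hnw}, for a set of reals $X\subseteq 2^\w$ the selection principle $S_1(\B_\Gamma,\B_\Gamma)$ is equivalent to $U_{\mathit{fin}}(\B,\B_\Gamma)$, the Hurewicz covering property with respect to all countable Borel covers, by \cite[Theorem~1]{SchTsa02}. This is precisely the property considered in \cite[Corollary~4.4]{MilTsa10}, so apart from invoking this equivalence there is nothing to translate.

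The second step is to recall the value of $\hot b$ in the Laver model. The Laver real added at each stage of the countable support iteration dominates every real of the corresponding intermediate model, and the whole iteration, of length $\w_2$ over a ground model of CH, satisfies the $\w_2$-chain condition; hence any subfamily of $\w^\w$ of size $\w_1$ already belongs to some intermediate model and is therefore bounded by the Laver real added at the next stage. Consequently $\hot b=\w_2=\cc$ in the Laver model, so there the assertion ``$|X|<\hot b$'' is nothing but ``$|X|\le\w_1$''.

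The last step is simply to quote \cite[Corollary~4.4]{MilTsa10}: in the Laver model every set of reals satisfying $S_1(\B_\Gamma,\B_\Gamma)$ (equivalently, $U_{\mathit{fin}}(\B,\B_\Gamma)$) has cardinality at most $\w_1$; combined with the previous paragraph, this gives $|X|<\hot b$. There is no real mathematical obstacle here, since all the content sits in the cited corollary; the only point requiring a little care is bibliographic, namely checking that \cite[Corollary~4.4]{MilTsa10} states its hypothesis as $S_1(\B_\Gamma,\B_\Gamma)$ (or as one of its standard equivalents, in which case the equivalence of \cite[Theorem~1]{SchTsa02} is used) and its conclusion either as ``cardinality $\le\w_1$'' or directly as ``cardinality $<\hot b$''.
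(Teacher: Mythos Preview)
Your proposal is correct and follows exactly the paper's approach: the paper simply states that the proposition is a consequence of \cite[Corollary~4.4]{MilTsa10}, and you have spelled out the routine details (the equivalence from \cite[Theorem~1]{SchTsa02} and the fact that $\hot b=\w_2$ in the Laver model) that make this citation work.
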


As a conclusion we have the following fact showing that countable spaces considered in Theorems~\ref{thm:ness_c_p_Hnw}, \ref{thm:ness_c_p_Mnw} and 
\ref{thm:ness_c_p_Rnw} \emph{cannot} give  non-trivial examples satisfying corresponding
$\mathit{nw}$-selectivity properties in ZFC.

\begin{corollary} \label{cor:no-non_trv_zfc}
In the Miller (resp. Laver) model,
let $X$ be a metrizable separable zero-dimensional space and $\CS$ a  countable clopen base of
$X$ closed under finite unions and complements. 
If $Y_{\CS}$ is M-{\it nw}-selective\ (resp. H-{\it nw}-selective\  or R-{\it nw}-selective), then it is trivial,
i.e., $w(Y)=|X|\leq\w_1<\hot d$ (resp. $w(Y)=|X|\leq\w_1<\hot b$ or 
$w(Y)=|X|\leq\w<\mathit{cov}(\M)=\w_1$).
\end{corollary}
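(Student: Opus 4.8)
The plan is to combine the three necessary-condition results of this section with the two cardinality bounds proved just above and with the standard values of the relevant cardinal invariants in the Miller and Laver models. Two routine preliminaries come first. A space admitting a countable clopen base is separable metrizable and zero-dimensional, hence embeds homeomorphically into $2^\w$; since all the covering properties occurring below, as well as $|X|$, are topological invariants, we may assume $X\subseteq 2^\w$. Moreover $Y=Y_{\CS}=\{\chi_S:S\in\CS\}$ is countable (as $|\CS|\le\w$) and dense in $C_p(X,2)$: given a basic clopen set $[\vec x,\vec\epsilon]$ with $x_0,\dots,x_{n-1}$ pairwise distinct, zero-dimensionality of $X$ together with the closure of $\CS$ under finite unions, finite intersections and complements produces $S\in\CS$ with $x_i\in S\iff\epsilon_i=0$, so that $\chi_S\in[\vec x,\vec\epsilon]\cap Y$. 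Being countable and dense, $Y$ has $w(Y)=|X|$ by the opening lines of the proof of Theorem~\ref{thm:suff_rnw} (which use no special property of $X$), and of course $nw(Y)=\w$. Thus it only remains to bound $|X|$.

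Now run through the three cases. If $Y$ is M-{\it nw}-selective, then by Theorem~\ref{thm:ness_c_p_Mnw} the space $X$ satisfies $S_{\mathit{fin}}(\C_\Omega,\C_\Omega)$, hence $U_{\mathit{fin}}(\C,\C)$, and Proposition~\ref{prp:mil_<d} yields $|X|\le\w_1$ in the Miller model; since $\w_1<\hot d$ there, $w(Y)=|X|\le\w_1<\hot d$ (and $|Y|=\w<\hot d$), so $Y$ is a trivial M-{\it nw}-selective\ space. If $Y$ is H-{\it nw}-selective, then by Corollary~\ref{cor:ness_c_p_Hnw} the space $X$ satisfies $S_1(\B_\Gamma,\B_\Gamma)$, and Proposition~\ref{prp:lav_<b} yields $|X|<\hot b$, i.e.\ $|X|\le\w_1$ since $\hot b=\w_2$ in the Laver model, so again $w(Y)=|X|\le\w_1<\hot b$ and $Y$ is trivial. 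If $Y$ is R-{\it nw}-selective, then by Theorem~\ref{thm:ness_c_p_Rnw} and the corollary following it $X$ is Rothberger; since in the Laver model the Borel Conjecture holds and every metrizable Rothberger space is countable, $X$ is countable, so $w(Y)=|X|\le\w<\w_1=\cov(\M)$ (and $|Y|=\w<\cov(\M)$), and $Y$ is a trivial R-{\it nw}-selective\ space.

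I do not anticipate a real obstacle: this corollary merely closes the loop of the section, and the steps that go beyond pure citation are the (routine) density of $Y_{\CS}$, the (standard) embedding $X\hookrightarrow 2^\w$, and recalling the invariants of the two models ($\hot d=\hot b=\w_2$ in the Miller/Laver models, and $\cov(\M)=\w_1$ together with the Borel Conjecture in the Laver model). If any care is needed it is only in matching each flavour of $\mathit{nw}$-selectivity to the corresponding covering property and the corresponding forcing model.
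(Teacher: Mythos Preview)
Your proof is correct and follows exactly the approach the paper intends: the paper gives no explicit proof of this corollary, treating it as an immediate consequence of Theorems~\ref{thm:ness_c_p_Mnw}, \ref{thm:ness_c_p_Rnw}, Corollary~\ref{cor:ness_c_p_Hnw}, and Propositions~\ref{prp:mil_<d} and~\ref{prp:lav_<b}, together with the well-known cardinal characteristics of the Miller and Laver models. Your write-up simply spells out the routine details (density of $Y_{\CS}$, the embedding $X\hookrightarrow 2^\w$, and the relevant values of $\hot d$, $\hot b$, $\cov(\M)$ and the Borel Conjecture) that the paper leaves implicit.
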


Even though spaces of the form $Y_{\CS}$ seem to be  
one's first inclination to construct countable dense subspaces of
$C_p(X,2)$, there are also other countable dense subspaces of 
$C_p(X,2)$, and we do not have any efficient way of analyzing 
their $nw$-selectivity properties in terms of (covering)
properties of $X$.

\begin{question}
Are there ZFC examples of  metrizable separable zero-dimensional spaces $X$ of size $\geq \hot d$ (resp. $\geq \hot b$, $\geq \mathit{cov}(\M)$) and countable dense subspaces $Y$ of
$C_p(X,2)$ which are M-{\it nw}-selective\ (resp. H-{\it nw}-selective, R-{\it nw}-selective)?
\end{question}

%%%%%%%%%%%%%%%%%%%%%%%%%%%%%%%%%%%%%%%%%%%%%%%%%%%%%%%%%%%
%%%%%%%%%%%%%%%%%%%%%%

The following fact has been established at the beginning of the proof of Proposition~\ref{prp:mil_<d} for Menger spaces without using any 
additional assumptions beyond ZFC,
and the same argument also works in  two other cases.

\begin{corollary} \label{cor:hered}
Let $X$ be a metrizable separable space.  If  
$X$ satisfies the Menger (resp. Hurewicz, Rothberger)
property for countable closed covers, then it is hereditarily
Menger (resp. Hurewicz, Rothberger).    
\end{corollary}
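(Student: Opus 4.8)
The plan is to isolate from the proof of Proposition~\ref{prp:mil_<d} the self-contained ZFC argument appearing at its very beginning and observe that it uses nothing specific to the Menger case. Recall that argument: given $X$ metrizable separable with the Menger property for countable closed covers, and given $G\subset X$ together with a sequence $\la\U_n:n\in\w\ra$ of covers of $G$ by open subsets of $X$, one first shrinks each $\U_n$ to a countable cover $\A_n$ of $G$ by open subsets of $X$ whose closures (taken in $X$) refine $\U_n$; this uses only that $X$ is second countable, so it is available in all three cases. One then forms the countable \emph{closed} cover $\W_n=\{\bar A:A\in\A_n\}\cup\{F_n\}$ of $X$, where $F_n=X\setminus\cup\A_n$, applies the closed-cover selection hypothesis to $X$, discards the harmless pieces $F_n$ (which are disjoint from $G$), and pulls the resulting selection back through the refinements to obtain a selection witnessing the corresponding open-cover property for $G$.

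First I would state that the shrinking step and the passage $\W_n\rightsquigarrow\W'_n\rightsquigarrow\A''_n\rightsquigarrow\U''_n$ are entirely combinatorial and insensitive to which selection principle is in force: whether we are allowed one set per level or finitely many sets per level, and whether the resulting family must merely cover $G$ or must $\gamma$-cover it, the transfer between the closed picture on $X$ and the open picture on $G$ respects these constraints verbatim. Concretely, for the Rothberger case one takes $\A_n$ to be a cover such that for each $A\in\A_n$ there is $U\in\U_n$ with $\bar A\subseteq U$, applies $S_1(\C,\C)$ on $X$ to pick a single $W_n\in\W_n$ for each $n$ with $G\subseteq\bigcup_n W_n$; since $W_n\neq F_n$ for the $n$'s that matter (those $F_n$ miss $G$), each such $W_n$ is $\bar{A_n}$ for some $A_n\in\A_n$, and choosing $U_n\in\U_n$ with $\bar{A_n}\subseteq U_n$ gives $G\subseteq\bigcup_n U_n$ with one set chosen per level, i.e.\ $G$ is Rothberger. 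For the Hurewicz case one applies $U_{\mathit{fin}}(\C,\C_\Gamma)$ (equivalently the Hurewicz property for countable closed covers) on $X$, obtaining finite $\W'_n\subseteq\W_n$ with $\{\cup\W'_n:n\in\w\}$ a $\gamma$-cover of $X$; deleting $F_n$ from each $\W'_n$ preserves the $\gamma$-cover property \emph{for $G$} (each point of $G$ lies in all but finitely many $\cup\W'_n$, and its membership there was never via $F_n$), and transferring through the refinement gives finite $\U''_n\subseteq\U_n$ with $\{\cup\U''_n:n\in\w\}$ a $\gamma$-cover of $G$, i.e.\ $G$ is Hurewicz.

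The only point that needs a word of care — and what I would flag as the closest thing to an obstacle — is the $\gamma$-cover bookkeeping in the Hurewicz case: one must check that removing the finitely-many ``junk'' sets $F_n$ and replacing closures by the open sets containing them does not destroy the ``all but finitely many'' condition at a point $x\in G$. This is immediate once one notes that $x\notin F_n$ for every $n$ (since $x\in\cup\A_n$), so if $x\in\cup\W'_n$ then already $x\in\cup\W''_n=\cup(\W'_n\setminus\{F_n\})\subseteq\cup\{\bar A:A\in\A''_n\}$, and then $x\in\bar A\subseteq U$ for the corresponding $U\in\U''_n$; hence $\{n:x\notin\cup\U''_n\}\subseteq\{n:x\notin\cup\W'_n\}$ is finite. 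Everything else is a direct re-run of the displayed computation in the proof of Proposition~\ref{prp:mil_<d}, performed three times with ``$S_{\mathit{fin}}$'', ``$U_{\mathit{fin}}(\cdot,\cdot_\Gamma)$'', and ``$S_1$'' in place of the Menger selection. Since $G\subseteq X$ was arbitrary, $X$ is hereditarily Menger (resp.\ Hurewicz, Rothberger), and no assumption beyond ZFC is used.
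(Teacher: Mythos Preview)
Your proposal is correct and is exactly the approach the paper takes: the paper's proof of Corollary~\ref{cor:hered} consists solely of the remark that the opening paragraph of the proof of Proposition~\ref{prp:mil_<d} establishes the Menger case in ZFC and that ``the same argument also works in two other cases.'' You have simply written out that same argument in full, including the straightforward adaptations to the Rothberger and Hurewicz selection schemes; your extra care with the $\gamma$-cover bookkeeping in the Hurewicz case is accurate and is precisely the kind of verification the paper leaves to the reader.
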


%%%%%%%%%%%%%%%%%%%%%%%%%%%%%%%%%%%%%%%%%%%%%%%%%%%%%%%%%%%%%
%%%%%%%%%%%%%%%%%%%%%%%%%%%%%%%%%%%%%%%%%%%%%%%%%%%%%%%%%%%%%%

In \cite{BG} it is provided an example distinguishing countable fan tightness and $M$-$nw$-selectivity which is uncountable,
now we can provide a countable one.

\begin{proposition} \label{Mnwsel}
The space $X=C_p(2^\omega,2)$ 
is countable $H$-separable and weakly Fr\'echet in the strict sense,
but it is not  M-{\it nw}-selective.
\end{proposition}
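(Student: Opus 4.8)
The plan is to show that $C_p(2^\w,2)$ is a countable dense subspace of itself that witnesses the failure of M-$\nw$-selectivity, while the positive properties (countable $H$-separability and the weak Fr\'echet in the strict sense property) follow from the fact that $C_p(2^\w,2)$ has countable $\pi$-weight, or more directly from known results on $C_p$ over compact metrizable spaces. For countability, recall that $2^\w$ is compact metrizable with a countable clopen base, so $C_p(2^\w,2)$ is a countable set: a continuous function $2^\w\to 2$ is determined by finitely many of its values on a clopen partition, so there are only countably many such functions. Density of $C_p(2^\w,2)$ in itself is trivial. Countable $H$-separability (hence the other positive assertions) is known since $C_p(K,2)$ for $K$ compact metrizable has countable $\pi$-weight; indeed every space with countable $\pi$-weight is $H$-separable, and a separable space with countable $\pi$-weight is weakly Fr\'echet in the strict sense with respect to dense subspaces. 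So those parts are quick citations to \cite{BBMT} and \cite{Sch2}.

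The heart of the matter is showing $X=C_p(2^\w,2)$ is \emph{not} M-$\nw$-selective. Here is where I would exploit the results of Section~\ref{sec:non-triv_necc}: $X$ itself is of the form $Y_{\CS}$ with ambient space $2^\w$, in the following sense. Let me instead argue directly via Proposition~\ref{prop:not_d}. The obstruction there is having cardinality $\geq\hot d$; but $X$ is countable, so that route is closed. So the correct strategy is to mimic the proof of Theorem~\ref{thm:ness_c_p_Mnw}/Corollary~\ref{cor:ness_c_p_Mnw}: if $X=Y_{\CS}$ for $2^\w$ were M-$\nw$-selective, then $2^\w$ would satisfy $S_{\mathit{fin}}(\C_\Omega,\C_\Omega)$, hence all finite powers of $2^\w$ would be Menger. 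But $2^\w$ is \emph{not} Menger with respect to countable closed covers — in fact $2^\w$ is not even Hurewicz-for-closed-covers, and more to the point $2^\w$ fails the Menger property with respect to countable closed covers, since a countable closed cover of $2^\w$ by, say, the sets $\{C_k\}$ where $C_k$ enumerates a partition into relatively clopen pieces indexed so that no finitely many from each stage cover — concretely, one uses that $2^\w$ is not $\sigma$-compact-in-the-relevant-sense, or simply that $2^\w$ maps onto $\w^\w$ which is not Menger. Thus I must check that $X=C_p(2^\w,2)$ really is (homeomorphic to) $Y_{\CS}$ for the clopen algebra $\CS$ of $2^\w$: every continuous $f:2^\w\to 2$ is the characteristic function $\chi_S$ of the clopen set $S=f^{-1}(0)$, and $\CS=$ the clopen algebra of $2^\w$ is a countable base closed under finite unions and complements. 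Hence $C_p(2^\w,2)=Y_{\CS}$ exactly, and Corollary~\ref{cor:ness_c_p_Mnw} applies.

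So the argument reduces to: (i) identify $C_p(2^\w,2)=Y_{\CS}$ with $\CS$ the clopen algebra of $2^\w$; (ii) invoke Theorem~\ref{thm:ness_c_p_Mnw} to conclude that M-$\nw$-selectivity of this space would force $2^\w$ to satisfy $S_{\mathit{fin}}(\C_\Omega,\C_\Omega)$, hence (by Corollary~\ref{cor:ness_c_p_Mnw}) all finite powers of $2^\w$ to be Menger; (iii) derive a contradiction since $2^\w$ is not Menger with respect to countable closed covers. For step (iii) I would produce the cover explicitly: fix a continuous surjection $q:2^\w\to\w^\w$ (exists since $\w^\w$ is a continuous image of $2^\w$), and let $C_n=q^{-1}(\{x\in\w^\w: x(0)\leq n\})$; each $C_n$ is closed, $\{C_n:n\in\w\}$ is an increasing countable closed cover of $2^\w$, but no finite subfamily from each stage of a constant sequence of this cover will do because $\w^\w$ is not even Lindel\"of-Menger — more carefully, iterate over coordinates to build the sequence $\la\C_n:n\in\w\ra$ showing failure of $S_{\mathit{fin}}(\C_\Omega,\C_\Omega)$, exactly as $\w^\w$ fails Menger. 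The main obstacle is making step (iii) fully rigorous in the closed-cover setting — I expect to route it through the equivalence (for zero-dimensional $X$) of $S_{\mathit{fin}}(\C^o_\Omega,\C^o_\Omega)$ with all finite powers being Menger (\cite[Theorem~3.9]{JMS}), and the standard fact that $2^\w$ (equivalently $\w^\w$, or $2^\w$ itself which is compact — wait, $2^\w$ is compact hence Menger!). This forces a correction: since $2^\w$ is compact, $2^\w$ \emph{is} Menger, so the contradiction must come from a finer point. Indeed the right obstruction is that while $2^\w$ is compact, the necessary condition from Theorem~\ref{thm:ness_c_p_Mnw} is $S_{\mathit{fin}}(\C_\Omega,\C_\Omega)$ with respect to \emph{countable closed} $\w$-covers, and a compact space trivially satisfies even $S_1(\C_\Omega,\C_\Omega)$; so Corollary~\ref{cor:ness_c_p_Mnw} gives no contradiction for compact $X$. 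Hence the genuine argument must instead directly construct a bad sequence of networks on $C_p(2^\w,2)$: enumerate $C_p(2^\w,2)=\{f_j:j\in\w\}$, and for each $n$ build a network $\NN_n$ so that any finite selection misses some basic open set $[\vec x,\vec\e]$ — the key is that for a fixed point $0\in C_p(2^\w,2)$, the local networks encode a dominating family via the "depth" needed to separate finite subsets of $2^\w$, and since $2^\w$ (unlike a countable space) contains a copy of a non-Menger-with-respect-to-closed-covers configuration when we look at countable dense subsets... The true main obstacle, then, is locating the precise combinatorial feature of $2^\w$ (versus a countable base space) that defeats M-$\nw$-selectivity of $C_p(2^\w,2)$; I anticipate it is that $C_p(2^\w,2)$ does not have countable fan tightness with respect to dense subsets (even though it is $H$-separable), and the proof will construct, for the point $0$, dense subspaces $A_n$ with $0\in\overline{A_n}$ such that no finite $F_n\subset A_n$ satisfy $0\in\overline{\bigcup F_n}$ — translated into networks via Lemma~\ref{lem:loc_char}, this yields the failure of M-$\nw$-selectivity, with the underlying reason being that the clopen algebra of $2^\w$ is not countable-closed-$\omega$-cover Menger in the strong local sense required.
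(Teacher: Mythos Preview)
Your setup is right: $C_p(2^\w,2)=Y_{\CS}$ for $\CS$ the clopen algebra of $2^\w$, and Corollary~\ref{cor:ness_c_p_Mnw} is exactly the tool to use. The positive properties are fine (the paper cites \cite[Theorem~40]{BBM} directly). But the proof derails at the point where you decide that compactness of $2^\w$ kills the argument.

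The claim that ``a compact space trivially satisfies even $S_1(\C_\Omega,\C_\Omega)$'' is \emph{false}: compactness controls open covers, not closed ones. Indeed, Corollary~\ref{cor:ness_c_p_Mnw} gives more than ``all finite powers of $2^\w$ are Menger'' (which is vacuous); it also gives that $2^\w$ is Menger \emph{with respect to countable closed covers}. The missing link is Corollary~\ref{cor:hered}: a separable metrizable space that is Menger for countable closed covers is \emph{hereditarily} Menger. But $2^\w$ is not hereditarily Menger, since $[\w]^\w\subset 2^\w$ is homeomorphic to the Baire space $\w^\w$, which is not Menger. That is the entire contradiction, and it is exactly the paper's two-line proof. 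Your instinct that ``$2^\w$ fails the Menger property with respect to countable closed covers'' was correct; you abandoned it for a wrong reason.

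A side error worth flagging: there is no continuous surjection $q:2^\w\to\w^\w$, since continuous images of compact spaces are compact. The relevant relationship between $2^\w$ and $\w^\w$ here is containment ($[\w]^\w\subset 2^\w$), not projection.
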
  
\begin{proof}
	 $2^\omega$ is not hereditarily Menger since 
  $[\w]^\w\subset 2^\w$ is not Menger being a copy of the Baire space. 
 Thus, $C_p(2^\w,2)$ is not M-{\it nw}-selective\ by Corollary~\ref{cor:hered}.

 The other properties of $C_p(2^\w,2)$
 directly follow from   \cite[Theorem~40]{BBM}.
  \end{proof}

%%%%%%%%%%%%%%%%%%%%%%%%%%%%%%%%%%%%%%%%%%%%%%%%%%%%%%%%%%%%%%%%%%%%%%%
%%%%%%%%%%%%%%%%%%%%%%%%%%%%%%%%%%%%%%%%%%%%%%%%%%%%%%%%%%%%%%%%%%%%%%%

\section{Non-preservation by products}\label{sec:prod}

This section is devoted to the proof of the following

\begin{theorem} \label{thr:non-prod_hnw}
It is consistent that there exist two countable H-{\it nw}-selective\ spaces with 
non-M-{\it nw}-selective\ product.
\end{theorem}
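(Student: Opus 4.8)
The plan is to build the two spaces as $Y_1 = Y_{\CS_1}$ and $Y_2 = Y_{\CS_2}$ inside $C_p(X_1,2)$ and $C_p(X_2,2)$ for suitably chosen metrizable separable zero-dimensional spaces $X_1, X_2 \subseteq 2^\w$, relying on the machinery of Section~\ref{sec:non-triv_suff} for the positive direction and the machinery of Section~\ref{sec:non-triv_necc} for the negative one. By Theorem~\ref{thm:suff_hnw}, to make each $Y_i$ H-{\it nw}-selective\ it suffices to arrange that all finite powers $X_i^n$ satisfy $S_1(\B_\Gamma,\B_\Gamma)$. By Corollary~\ref{cor:ness_c_p_Mnw} (the Menger one, i.e.\ Corollary~\ref{cor:ness_c_p_Mnw} applied via Theorem~\ref{thm:ness_c_p_Mnw}), if the \emph{product} $Y_1\times Y_2$ were M-{\it nw}-selective, then — after observing $Y_1\times Y_2$ is itself a countable dense subspace of $C_p(X_1\sqcup X_2,2)$ of the form $Y_{\CS}$ for the natural base $\CS$ generated by $\CS_1$ and $\CS_2$ — the space $X_1\sqcup X_2$ would be Menger with respect to countable closed covers, hence (Corollary~\ref{cor:hered}) hereditarily Menger, hence $X_1^n\times X_2^m$ would be Menger for all $n,m$. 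So the whole problem reduces to the following purely combinatorial/forcing task: \emph{produce, consistently, two sets of reals $X_1,X_2\subseteq 2^\w$ such that every finite power of each $X_i$ satisfies $S_1(\B_\Gamma,\B_\Gamma)$, yet some finite product $X_1^n\times X_2^m$ (equivalently $X_1\times X_2$, since $S_1(\B_\Gamma,\B_\Gamma)$ of all finite powers of a single space implies Mengerness of all finite powers) fails to be Menger} — in fact it is cleanest to kill Mengerness of $X_1\times X_2$ directly.

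The natural construction is a finite-support or countable-support iteration, or a single product forcing, adding a generic pair $(X_1,X_2)$ with $|X_1|=|X_2|=\aleph_1$. The Hurewicz-type requirement on all finite powers of $X_i$ is exactly the kind of property obtained from the random-real construction of Proposition~\ref{prop: Bre_random}: if $X_1$ is (a subset of) a set of mutually random reals of size $\aleph_1$ over the ground model, then all its finite powers satisfy $S_1(\B_\Gamma,\B_\Gamma)$ by that proposition. So I would take $V$ a model of CH, and force with $B(\omega_1)\times B(\omega_1)$ (two side-by-side random algebras) — or iterate them — obtaining $X_1$ from the first factor and $X_2$ from the second. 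Mutual genericity guarantees each $X_i$ individually still consists of random reals over $V$ (random forcing is $\w^\w$-bounding and the second copy does not add randoms over $V[\text{first copy's reals}]$ in the relevant sense, or more precisely one verifies the finite-power $\gamma$-cover statement survives by the same measure-theoretic argument as in Proposition~\ref{prop: Bre_random}, now run inside the product extension). The crux is the \emph{failure of Mengerness of $X_1\times X_2$}: here one exploits that $X_1\times X_2$ contains, generically, a homeomorphic copy of $\w^\w$ (or of a dominating/non-Menger set), essentially because the pair of a random real and an independently random real can code a point "escaping" every ground-model-coded $\sigma$-compact set, and with $\aleph_1$-many such pairs one builds a closed-in-$X_1\times X_2$ copy of the irrationals, which is not Menger.

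The main obstacle I anticipate is precisely this last point: verifying that $X_1\times X_2$ is \emph{not} Menger while keeping each factor's finite powers Hurewicz-for-Borel-covers. One must rule out that the "niceness" (boundedness, Hurewicz character) of the two factors conspires to make the product Menger too — after all, random reals are $\w^\w$-bounded over $V$, so the product of the two real-sets is also bounded, and bounded sets \emph{can} be Menger. The resolution should be that while each $X_i$ is $\le^*$-bounded, the product $X_1\times X_2$, viewed through a clever continuous map to $\w^\w$, hits an \emph{unbounded} (indeed dominating, or at least non-Menger) subset of $\w^\w$: the generic interleaving of $x_1\in X_1$ and $x_2\in X_2$ produces reals not dominated by any single ground-model function, and a standard book-keeping over the $\aleph_1$ coordinates yields a subset of the product continuously mapping onto a non-Menger (e.g.\ $\mathfrak d$-sized dominating, under $\mathfrak d=\w_1$ in the extension) set — this is where one needs $\mathfrak d=\w_1$ and $|X_1\times X_2|=\w_1$ to align, together with the fact that non-Mengerness is witnessed on a closed subspace so that it passes to "Menger with respect to countable closed covers". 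I would carry this out by first fixing the book-keeping that enumerates the $\w_1$-many coordinates of $X_1$ and $X_2$ and the countably-many "threats" to be diagonalized, then proving the single-step genericity lemmas (the random analogue of Lemma~\ref{lem:ran_in_nonzero} applied in the product, plus a "the generic pair escapes a given $\sigma$-compact set" density lemma), and finally assembling the copy of $\w^\w$ inside $X_1\times X_2$; then the negative conclusion about M-{\it nw}-selectivity\ of $Y_1\times Y_2$ follows formally from Corollary~\ref{cor:ness_c_p_Mnw} and Corollary~\ref{cor:hered} as described above, and the positive H-{\it nw}-selectivity\ of each $Y_i$ from Theorem~\ref{thm:suff_hnw}.
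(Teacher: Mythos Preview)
Your high-level strategy matches the paper's exactly: take $X_1,X_2\subset 2^\w$ with all finite powers $S_1(\B_\Gamma,\B_\Gamma)$ but $X_1\times X_2$ non-Menger, set $Y_i=Y_{\CS_i}$, and invoke Theorem~\ref{thm:suff_hnw} and Corollary~\ref{cor:ness_c_p_Mnw}. The reduction to $X_1\times X_2$ non-Menger is fine (although note that ``$X_1\sqcup X_2$ hereditarily Menger'' does not by itself give Mengerness of $X_1\times X_2$; you need the ``all finite powers'' clause of Corollary~\ref{cor:ness_c_p_Mnw}, which you implicitly use anyway).

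The genuine gap is in the forcing construction. If you force with $B(\w_1)\times B(\w_1)$ and let $X_1,X_2$ be the two blocks of random reals, then since $B(\w_1)\times B(\w_1)\cong B(\w_1+\w_1)\cong B(\w_1)$, the union $X_1\cup X_2$ is itself a set of $\w_1$ random reals over $V$, so by Proposition~\ref{prop: Bre_random} \emph{all finite powers of $X_1\cup X_2$} satisfy $S_1(\B_\Gamma,\B_\Gamma)$. In particular $(X_1\cup X_2)^2$ is Hurewicz, and since $S_1(\B_\Gamma,\B_\Gamma)$ is hereditary for Borel subsets, $X_1\times X_2$ is Hurewicz, hence Menger. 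So your construction cannot produce a non-Menger product. Your ``escaping'' intuition is also backwards: random forcing is $\w^\w$-bounding, so \emph{every} real in the extension is $\leq^*$-dominated by a ground-model real, and no interleaving of two randoms can escape $V\cap\w^\w$. The paper's missing idea is a \emph{shift}: over a CH ground model, fix an enumeration $\{z_\alpha:\alpha<\w_1\}$ of $[\w]^\w\cap V$, add a single block $X=\{r_\alpha:\alpha<\w_1\}$ of random reals, and set $X_1=\{r_\alpha+z_\alpha:\alpha<\w_1\}$ (coordinatewise addition mod $2$). Each $r_\alpha+z_\alpha$ is still random over $V$, so (a variant of) Proposition~\ref{prop: Bre_random} gives all finite powers of $X_1$ satisfying $S_1(\B_\Gamma,\B_\Gamma)$. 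But now the continuous addition map sends $X\times X_1$ onto a set containing $\{z_\alpha:\alpha<\w_1\}=[\w]^\w\cap V$, which is dominating precisely \emph{because} random forcing is bounding; hence $X\times X_1$ is not Menger. The point is that one must correlate the two factors through the ground model rather than make them mutually generic.
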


We need the following variation of 
 Proposition~\ref{prop: Bre_random}. Let us note that $2^\w=\{0,1\}^\w$
 with the operation $+$ of the coordinate-wise 
 addition modulo $2$ is a compact Boolean topological group.

\begin{proposition}    \label{prop: Bre_random_var}
	Suppose that $V$ is a ground model of CH and 
 $\{z_\alpha:\alpha<\w_1\}$  is an enumeration in $V$ of
  $[\w]^\w\subset 2^\w$.
   Let $X=\{r_\alpha: \alpha<\w_1\}\subset 2^\w$ be the  set of  generic random reals over $V$  added by  $B(\w_1)$. Let also 
$G$ be $B(\w_1)$-generic over $V$ giving rise to $X$.
 Then in $V[G]$,  all finite power of 
 $$X_1=\{r_\alpha+z_\alpha:\alpha<\w_1\}\subset 2^\w$$
 satisfy $S_1(\B_\Gamma,\B_\Gamma)$.
 \end{proposition}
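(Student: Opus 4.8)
The plan is to deduce this from Proposition~\ref{prop: Bre_random} by a translation/automorphism trick rather than redoing the induction: adding a fixed \emph{ground-model} element to each generic random real again produces a system of generic random reals, only for a twisted generic filter over the \emph{same} ground model $V$. Once that is established, the conclusion transfers word for word.

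Concretely, I would work inside the compact Boolean group $2^{\w_1\times\w}$ with coordinatewise addition modulo $2$. Let $\bar z\in 2^{\w_1\times\w}$ be given by $\bar z(\alpha,n)=z_\alpha(n)$; since the enumeration $\langle z_\alpha:\alpha<\w_1\rangle$ of $[\w]^\w$ is taken in $V$, we have $\bar z\in V$. Translation by $\bar z$, i.e.\ the homeomorphism $\Phi\colon x\mapsto x+\bar z$ of $2^{\w_1\times\w}$, is measure preserving and lies in $V$, so it induces an automorphism $\Phi_*$ of the measure algebra $B(\w_1)$, again with $\Phi_*\in V$. Setting $G'=\Phi_*[G]$, standard facts about automorphisms of forcing notions give that $G'$ is $B(\w_1)$-generic over $V$ and $V[G']=V[G]$. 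The key computation is then to check that the $\alpha$-th generic random real read off from $G'$ in the standard way is exactly $r_\alpha+z_\alpha$: using $r_\alpha(n)=i$ iff $[\{x:x(\alpha,n)=i\}]\in G$ and unwinding $\Phi_*$, one gets that $r'_\alpha(n)=i$ iff $[\{x:x(\alpha,n)=i+z_\alpha(n)\}]\in G$ iff $r_\alpha(n)=i+z_\alpha(n)$, so $r'_\alpha=r_\alpha+z_\alpha$. In particular the reals $r_\alpha+z_\alpha$ are pairwise distinct, being mutually generic random reals over $V$, so $X_1$ is, as a subset of $2^\w$, precisely the set of generic random reals added by the $B(\w_1)$-generic filter $G'$ over $V$.

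With this in hand the proof finishes by applying Proposition~\ref{prop: Bre_random} with $G'$ and $X_1$ in place of $G$ and $X$: in $V[G']$ all finite powers of $X_1$ satisfy $S_1(\B_\Gamma,\B_\Gamma)$, and $V[G']=V[G]$. The genuinely load-bearing points are that $\Phi_*\in V$ (which is exactly why the hypothesis insists the enumeration of $[\w]^\w$ be fixed in $V$ — without a ground-model enumeration the twisted reals need not be random over $V$) and the identification of the random reals of $G'$; I expect the first to be the conceptual crux and the second to be routine bookkeeping. If one prefers to avoid automorphisms, the same result can be obtained by literally repeating the induction from the proof of Proposition~\ref{prop: Bre_random}, observing that by the countable chain condition of $B(\w_1)$ the countable set $A\subseteq\w_1$ coding the given Borel covers can be chosen inside $V$, whence $\langle z_\alpha:\alpha\in A\rangle\in V$ and $V[\{r_\alpha+z_\alpha:\alpha\in A\}]=V[\{r_\alpha:\alpha\in A\}]$, and $r_\alpha+z_\alpha$ is random over this model for $\alpha\notin A$; the rest of the argument goes through unchanged.
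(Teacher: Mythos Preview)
Your automorphism argument is correct and is a cleaner route than the one the paper takes. The paper instead isolates Lemma~\ref{lem:ran_in_nonzero_var} (for every ground-model Borel set $D\subset 2^\w$ of positive measure there is $\beta$ with $r_\beta+z_\beta\in D$) and then implicitly reruns the induction of Proposition~\ref{prop: Bre_random} with $r_\alpha$ replaced by $r_\alpha+z_\alpha$, i.e.\ essentially the alternative you sketch at the end. Your primary argument bypasses this repetition: the observation that translation by the ground-model element $\bar z$ induces a $V$-automorphism $\Phi_*$ of $B(\w_1)$, so that $G'=\Phi_*[G]$ is again $B(\w_1)$-generic over $V$ with $V[G']=V[G]$ and $r'_\alpha=r_\alpha+z_\alpha$, lets you invoke Proposition~\ref{prop: Bre_random} verbatim. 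The payoff of your approach is economy and transparency (it makes explicit that $X_1$ is literally a set of random reals for a different generic, so any property proved for such sets transfers automatically), and it explains conceptually why the ground-model enumeration hypothesis is exactly what is needed. The paper's approach, while more repetitive, has the minor advantage of making Lemma~\ref{lem:ran_in_nonzero_var} available as a standalone tool; your argument of course yields that lemma too as an immediate corollary.
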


 Proposition~\ref{prop: Bre_random_var} is a 
consequence of the following fact which could be established in the same way
as Lemma~\ref{lem:coh_in_nonmeag}, basically replacing ``Cohen'' and ``meager''
with ``random'' and ``measure $0$''.

\begin{lemma} \label{lem:ran_in_nonzero_var}
We use notation from Proposition~\ref{prop: Bre_random_var}.
Suppose that $D\subset 2^\omega$ is a Borel non-measure zero set coded in $V$. Then there exists $\beta<\lambda $ such that $r_\beta+z_\alpha\in D$.
\end{lemma}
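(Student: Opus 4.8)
The plan is to prove Lemma~\ref{lem:ran_in_nonzero_var} first, and then deduce Proposition~\ref{prop: Bre_random_var} by essentially repeating the inductive argument in the proof of Proposition~\ref{prop: Bre_random}, with one extra bookkeeping twist to account for the shift by the $z_\alpha$'s. For the lemma, fix a Borel set $D\subset 2^\w$ of positive measure coded in $V$, and fix any $\alpha<\w_1$. Since $2^\w$ is a compact group under coordinatewise addition mod $2$ and Lebesgue measure is translation-invariant, the set $D_\alpha:=D+z_\alpha=\{z+z_\alpha:z\in D\}$ is again a Borel set of positive measure coded in $V$ (it is coded in $V$ because $z_\alpha\in V$ and translation by a ground-model real is a ground-model Borel operation). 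Now the standard fact underlying Lemma~\ref{lem:coh_in_nonmeag} in the random-real setting says: for any positive-measure Borel set $E$ coded in $V$ there is $\beta<\w_1$ with $r_\beta\in E$; indeed, one argues by a density/fusion argument that below any condition $p\in B(\w_1)$ and for $\beta$ outside the (countable) support of $p$, the condition can be refined to force $r_\beta\in E$, because $r_\beta$ is a random real over a model computing $E$ and random reals avoid all ground-model measure-zero sets. Applying this to $E=D_\alpha$ yields $\beta<\w_1$ with $r_\beta\in D+z_\alpha$, i.e.\ $r_\beta+z_\alpha\in D$, which is the statement (with $\lambda=\w_1$).

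Next I would prove Proposition~\ref{prop: Bre_random_var}. The target is that every finite power $X_1^{k}$ of $X_1=\{r_\alpha+z_\alpha:\alpha<\w_1\}$ satisfies $S_1(\B_\Gamma,\B_\Gamma)$, equivalently $U_{\mathit{fin}}(\B,\B_\Gamma)$ by \cite[Theorem~1]{SchTsa02}. I would run the same induction on $k$ as in the proof of Proposition~\ref{prop: Bre_random}. The base case $k=0$ is trivial. For the inductive step, given Borel covers $\mathcal B_n=\{B^n_i:i\in\w\}$ of $X_1^{k+1}$, pick $A\in[\w_1]^\w$ such that the sequence $\la\la B^n_i:i\in\w\ra:n\in\w\ra$ is coded in $V[\{r_\alpha:\alpha\in A\}]$ — note $A$ also captures the relevant countably many $z_\alpha$'s, which live in $V$ anyway. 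First one shows $(2^\w)^{k+1}\setminus\bigcup\mathcal B_n$ has measure zero for every $n$: otherwise its complement $L$ is a positive-measure Borel set coded in $V[\{r_\alpha:\alpha\in A\}]$, and by Lemma~\ref{lem:ran_in_nonzero_var} applied over that intermediate model — picking distinct indices $\beta_0<\dots<\beta_k$ in $\w_1\setminus A$ one at a time, each $r_{\beta_j}$ being random over the model generated so far — one lands a tuple $\la r_{\beta_0}+z_{\beta_0},\dots,r_{\beta_k}+z_{\beta_k}\ra\in L$, contradicting that $\mathcal B_n$ covers $X_1^{k+1}$. Then, exactly as before, in $V[\{r_\alpha:\alpha\in A\}]$ choose $i_n$ with $\nu(\bigcup_{i\le i_n}B^n_i)\ge 1-2^{-n}$ so that $Z:=\bigcup_m\bigcap_{n\ge m}\bigcup_{i\le i_n}B^n_i$ has full measure; every tuple $\la r_{\alpha_0}+z_{\alpha_0},\dots,r_{\alpha_k}+z_{\alpha_k}\ra$ with distinct $\alpha_j\in\w_1\setminus A$ lies in $Z$ because it lies in every full-measure Borel set coded in $V[\{r_\alpha:\alpha\in A\}]$ (here one uses that a finite tuple of mutually random reals over a model avoids every measure-zero set coded there, and translating by ground-model reals preserves this). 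Hence $Y:=X_1^{k+1}\setminus Z$ is contained in the set of tuples where some coordinate index lies in $A$ or two coordinate indices coincide, so $Y$ is covered by countably many homeomorphic copies of $X_1^{j}$, $j\le k$; by the induction hypothesis $Y$ satisfies $S_1(\B_\Gamma,\B_\Gamma)$, so we can thin the covers over $Y$, and combining with the $\gamma$-cover $\{\bigcup_{i\le i_n}B^n_i:n\in\w\}$ of $Z$ by taking $\bigcup_{i\le\max\{i_n,j_n\}}B^n_i$ we get a $\gamma$-cover of $Z\cup Y=X_1^{k+1}$.

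Finally, I would assemble Theorem~\ref{thr:non-prod_hnw}. Work in $V[G]$ where $V\models$ CH. Let $X=\{r_\alpha:\alpha<\w_1\}$ be the random reals added by $B(\w_1)$ and $X_1=\{r_\alpha+z_\alpha:\alpha<\w_1\}$ as above, where $\{z_\alpha:\alpha<\w_1\}$ enumerates $[\w]^\w$ in $V$. By Proposition~\ref{prop: Bre_random} all finite powers of $X$ satisfy $S_1(\B_\Gamma,\B_\Gamma)$, and by Proposition~\ref{prop: Bre_random_var} all finite powers of $X_1$ do too; in particular both $X$ and $X_1$ are metrizable separable zero-dimensional subsets of $2^\w$. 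Take countable dense subspaces $Y\subset C_p(X,2)$ and $Y_1\subset C_p(X_1,2)$. By Theorem~\ref{thm:suff_hnw}, $Y$ and $Y_1$ are both H-{\it nw}-selective\ (countable). Now consider the product $Y\times Y_1$; it is a countable space, so by Lemma~\ref{lem:loc_char} M-{\it nw}-selectivity\ for it is equivalent to local M-{\it nw}-selectivity, and one checks, as in Section~\ref{sec:non-triv_necc}, that its M-{\it nw}-selectivity would force a combinatorial covering property on the ``parameter space'' $X\sqcup X_1$ or rather $X\times X_1$ — the point being that $X\times X_1\supseteq\{(r_\alpha,r_\alpha+z_\alpha):\alpha<\w_1\}$, which projects onto $\{z_\alpha:\alpha<\w_1\}=[\w]^\w\subset 2^\w$ via the continuous map $(a,b)\mapsto a+b$, and $[\w]^\w$ is a closed copy of the Baire space, hence not Menger. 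Thus $X\times X_1$ is not even Menger (indeed not hereditarily Menger), so by Corollary~\ref{cor:hered} together with the necessary-condition results (Theorem~\ref{thm:ness_c_p_Mnw} / Corollary~\ref{cor:ness_c_p_Mnw}) applied to a space of the form $Y_{\CS}$ for a suitable clopen base of $X\times X_1$, no countable dense subspace of $C_p(X\times X_1,2)$ of that form is M-{\it nw}-selective; and since $Y\times Y_1$ embeds as a dense subspace of $C_p(X\sqcup X_1,2)\cong C_p(X,2)\times C_p(X_1,2)$ — wait, one must be careful here: the cleanest route is to observe $C_p(X,2)\times C_p(X_1,2)=C_p(X\oplus X_1,2)$, that $Y\times Y_1$ is a countable dense subspace of it, and that $X\oplus X_1$ fails to be hereditarily Menger (it contains $X_1$, hence a copy of the non-Menger $[\w]^\w$ after applying the shift — actually $X_1$ itself need not be non-Menger, so instead one uses that $(X\oplus X_1)$'s finite powers are not all Menger, because $X^1\times X_1^1$ is a power-like combination whose continuous image $a+b$ contains $[\w]^\w$). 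I would then invoke Corollary~\ref{cor:ness_c_p_Mnw}: if the ``standard'' dense subspace $Y_{\CS}$ of $C_p(X\oplus X_1,2)$ were M-{\it nw}-selective, all finite powers of $X\oplus X_1$ would be Menger, contradicting the above. The main obstacle — and the place where I would be most careful — is exactly this last transfer: matching $Y\times Y_1$ with a space of the form $Y_{\CS}$ so that the necessary-condition theorems apply, and correctly identifying which finite power of $X\oplus X_1$ (equivalently, which product $X^m\times X_1^\ell$) fails Menger via the continuous addition map onto $[\w]^\w$. Everything else is a faithful repetition of the Cohen/random template already developed in the paper.
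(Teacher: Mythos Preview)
Your proof of the lemma is correct and follows the same idea the paper indicates (a density argument as in Lemma~\ref{lem:coh_in_nonmeag}, with ``Cohen/meager'' replaced by ``random/measure zero''): translating $D$ by a ground-model real $z_\alpha$ preserves positive measure and Borel-coding in $V$, and then the standard random-real density argument places some $r_\beta$ in the translate. One bookkeeping point: the lemma as stated (and as you prove it) has a free index $\alpha$, but what is actually needed downstream in Proposition~\ref{prop: Bre_random_var} is the version with matching indices, i.e., some $\beta$ with $r_\beta+z_\beta\in D$. Your argument gives this too once you run the density argument properly: given a condition $p$, first choose $\beta$ outside the support of $p$ and \emph{then} translate by $z_\beta$; as written you fix $\alpha$ before choosing $\beta$, which yields $r_\beta+z_\alpha\in D$ with possibly $\alpha\neq\beta$. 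This is a trivial reordering, but worth making explicit since your Proposition sketch silently switches to the matched-index version.

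In your sketch of the theorem there is a genuine gap. You write that the image of the diagonal $\{(r_\alpha,r_\alpha+z_\alpha):\alpha<\w_1\}$ under $(a,b)\mapsto a+b$ is ``$\{z_\alpha:\alpha<\w_1\}=[\w]^\w$'' and conclude non-Menger because $[\w]^\w$ is a copy of the Baire space. But $\{z_\alpha:\alpha<\w_1\}$ is only $([\w]^\w)^V$, which is a proper subset of $[\w]^\w$ in $V[G]$ since random forcing adds reals. The paper's argument is different and uses a property of the forcing: $B(\w_1)$ is $\w^\w$-bounding, so $([\w]^\w)^V$ is \emph{dominating} in $V[G]$; since $[\w]^\w\cap V\subset X+X_1$, the addition map gives $X\times X_1$ a dominating continuous image in $\w^\w$, and a space with a dominating continuous image cannot be Menger. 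The paper also checks that $X+X_1\subset[\w]^\w$ (so the addition map really lands in Baire space), using that $r_{\alpha_0}+r_{\alpha_1}\notin V$ for $\alpha_0\neq\alpha_1$.

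Finally, your hesitation at the last transfer step is resolved cleanly in the paper: one takes specific countable dense subspaces $Y_{\CS(X)}\subset C_p(X,2)$ and $Y_{\CS(X_1)}\subset C_p(X_1,2)$ of the ``standard'' form, sets $\CS=\{U\cup W:U\in\CS(X),\,W\in\CS(X_1)\}$, and observes that $Y_{\CS}\subset C_p(X\sqcup X_1,2)$ is homeomorphic to $Y_{\CS(X)}\times Y_{\CS(X_1)}$. Since $(X\sqcup X_1)^2$ contains a closed copy of $X\times X_1$ and hence is not Menger, Corollary~\ref{cor:ness_c_p_Mnw} applies directly to $Y_{\CS}$, so the product $Y_{\CS(X)}\times Y_{\CS(X_1)}$ is not M-{\it nw}-selective. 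This avoids the detour through arbitrary dense $Y,Y_1$ that was giving you trouble.
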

\smallskip

\noindent\textit{Proof of Theorem~\ref{thr:non-prod_hnw}.}\ 
We use   notation from Proposition~\ref{prop: Bre_random_var} and work in $V[G]$.
By the definitions of $X$ and $X_1$ we have 
that 
$$z_\alpha=(r_\alpha+z_\alpha)+r_\alpha\in X_1+X$$
for all $\alpha\in\w_1$,
and hence $[\w]^\w\cap V\subset X+X_1$. 
On the other hand, since for $\alpha_0\neq\alpha_1$ the sum $r_{\alpha_0}+r_{\alpha_1}$
cannot lie in $V$,
we conclude that $X+X_1\subset [\w]^\w$.
Thus 
$$ [\w]^\w\cap V\subset X+X_1\subset [\w]^\w. $$

Since $B(\w_1)$ does not add unbounded reals, $[\w]^\w\cap V$
is dominating, where each infinite subset $a$ of $\w$ is identified with the increasing function 
in $\w^\w$  whose range is $a$. 
It follows that $X\times X_1$ is not Menger since it has a dominating continuous image
in $[\w]^\w$, namely
$X+X_1$. Consequently, $[X\sqcup X_1]^2$ is not Menger because it has a closed topological copy of $X\times X_1$.

Now, by
Corollary~\ref{cor:ness_c_p_Mnw}, if $\CS$ is a countable clopen base 
for $X\sqcup X_1$ closed under finite unions and complements, than 
$Y_{\CS}$ is not M-{\it nw}-selective\  as a subspace of
$C_p(X\sqcup X_1,2)= C_p(X,2)\times C_p(X_1,2)$.

Let $\CS(X)$ and $\CS(X_1)$ be
countable clopen bases closed under finite unions and complements  
for $X$ and $X_1$, respectively. 
Then $Y_{\CS(X)}$ and $Y_{\CS(X_1)}$ are countable dense H-{\it nw}-selective\ 
subspaces of $C_p(X,2)$ and $C_p(X_1,2)$ by Theorem~\ref{thm:suff_hnw}, respectively.

On the other hand, set 
$$\CS=\{U\cup W: U\in\CS(X),W\in\CS(X_1)\}$$
and observe that $\CS$  is a countable clopen base 
for $X\sqcup X_1$ closed under finite unions and complements,
and hence $Y_{\CS}$ is not M-{\it nw}-selective\ as a subspace of $C_p(X\sqcup X_1,2)$.
It remains to 
note that 
$Y_{\CS}$ is a homeomorphic copy of
$Y_{\CS(X)}\times Y_{\CS(X_1)}$.
\hfill $\Box$
\medskip

The analogous strategy with random reals replaced 
by Cohen reals does not seem to give anything interesting: Unlike 
 in the random model, $[\w]^\w\cap V$ is known to satisfy
 $S_1(\B_\Omega,\B_\Omega)$ in the Cohen model, so the approach above 
 based on $[\w]^\w\cap V$ being ``big'' in a suitable sense (e.g., dominating in the random model)
 does not work.  This motivates the following 

 \begin{question} \label{q5}
 Is the existence of two countable
 R-{\it nw}-selective\ spaces with non-R-{\it nw}-selective\ (or even non-M-{\it nw}-selective) product consistent?
  \end{question}

On the other hand, we do not know whether countable spaces like in Theorem~\ref{thr:non-prod_hnw}
could be constructed in ZFC.

\begin{question}\label{q6}
Is it consistent that the product of two countable
M-{\it nw}-selective\ (resp. H-{\it nw}-selective, R-{\it nw}-selective) spaces is again M-{\it nw}-selective\ (resp. H-{\it nw}-selective, R-{\it nw}-selective)?
\end{question}

%%%%%%%%%%%%%%%%%%%%%%%%%%%%%%%%%%%%%%%%%%%%%%%%%%%%%%%%%%%%%%%%%%%%%%%

\section{HFD's and R-{\it nw}-selectivity}\label{sec:HFD}

HFD spaces where introduced in order to construct $S$-spaces, i.e.,  hereditarily separable spaces which are not Lindel\"of, see \cite{J, Juh} and references therein.
In this section we show that stronger version of the HFD spaces are 
\Rnw. The following definition is taken from \cite{J}.

\begin{definition} \label{def_7_1}
 Let $\lambda$ be an uncountable cardinal.
 A subset $X \subset 2^\lambda$ is called HFD (abbreviated from Hereditarily Finally Dence) if $X$ is infinite and for every $A \in[X]^\omega$ there is a $B \in[\lambda]^\omega$ such that $A$ (i.e., $A \upharpoonright(\lambda \backslash B)$) is dense in $2^{\lambda \backslash B}$.
 %(such an $A$ is also called finally dense).
\end{definition}
We use the following notation of \cite{Juh}. Given some $\varepsilon \in 
\mathit{Fin}(I, 2)$, where $\mathit{Fin}(I,2)$ denotes the collection of all finite partial functions on $I$ to $2$, we set $[\varepsilon]=\{f \in 2^I: \varepsilon \subset f\}$. Thus, $[\varepsilon]$ is a standard  basic clopen subset of $2^I$. Now suppose that  $I$ is a set of ordinals,  $b \in[I]^{<\omega}$, $ b=\left\{\beta_i: i \in n=|b|\right\}$ is an  increasing enumeration, and $\varepsilon \in 2^n$.  In this case we denote by $\varepsilon * b$ the element of $\mathit{Fin}(I,2)$ which has $b$ as its domain and satisfies $\varepsilon * b\left(\beta_i\right)=\varepsilon(i)$ for all $i \in n$. 

For any infinite cardinal $\mu$ and $r \in \omega$ we denote by $\mathcal{D}_\mu^r(I)$ the collection of all sets $B \in\left[[I]^r\right]^\mu$ such that the members of $B$ are pairwise disjoint. We shall write
$$
\mathcal{D}_\mu(I)=\bigcup\left\{\mathcal{D}_\mu^r(I): r \in \omega\right\}
$$
If $B \in \mathcal{D}_\mu(I)$ then $n(B)=|b|$ for any $b \in B$. Now, if $B \in \mathcal{D}_\mu(I)$ and $\varepsilon \in 2^{n(B)}$ then
$$
[\varepsilon, B]=\bigcup\{[\varepsilon * b]: b \in B\}
$$
is called a $\mathcal{D}_\mu$-set in $2^I$. The most important instance of the above is  $\mu=\omega$, in this case we shall often omit the lower index $\omega$ of $\mathcal{D}$, i.e.,  a $\mathcal{D}$-set in $2^I$ is a $\mathcal{D}_\omega$-set. Clearly, any $\mathcal{D}$-set is open dense  and has product (Lebesgue) measure 1 in $2^I$. 
Recall that a map $F: \kappa \times \lambda \rightarrow 2$ with $\kappa \geqslant \omega, \lambda \geqslant \omega_1$ is called an HFD matrix (see \cite{Juh}) if for every $A \in[\kappa]^\omega, B \in \mathcal{D}_{\omega_1}(\lambda)$ and $\varepsilon \in 2^{n(B)}$ there are $\alpha \in A$ and $b \in B$ such that
$$
f_\alpha=F(\alpha,-) \supset \varepsilon * b .
$$
The latter inclusion means that $F\left(\alpha, \beta_i\right)=\varepsilon(i)$  for each $i<n(B)=$ $|b|$, where $\beta_i$ is the $i$-th member of $b$ in its increasing enumeration. The following fact was established in \cite{Juh}.

\begin{proposition}
	$X \subset 2^\lambda$ is an HFD space if and only if there exists an HFD matrix $F: \kappa \times \lambda \rightarrow 2$ such that $X=\left\{f_\alpha: \alpha \in \kappa\right\}$. 
\end{proposition}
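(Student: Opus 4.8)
The plan is to prove the two implications separately, essentially by unwinding the definitions; the non-routine direction is that an HFD matrix produces an HFD space, and its core is an elementary combinatorial fact about families of finite sets.

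For the implication from HFD space to HFD matrix, we may take an injective enumeration $X=\{f_\alpha:\alpha\in\kappa\}$ with $\kappa=|X|\geq\w$ and set $F(\alpha,\beta)=f_\alpha(\beta)$. To verify the matrix condition, let $A\in[\kappa]^\w$, $B\in\mathcal D_{\w_1}(\lambda)$ with $n(B)=r$, and $\varepsilon\in 2^{r}$ be given; since the enumeration is injective, $A'=\{f_\alpha:\alpha\in A\}\in[X]^\w$, so the HFD property of $X$ yields $B_0\in[\lambda]^\w$ with $A'\uhr(\lambda\setminus B_0)$ dense in $2^{\lambda\setminus B_0}$. The $\w_1$ members of $B$ are pairwise disjoint, so (mapping each member that meets $B_0$ injectively to a point of that intersection) at most countably many of them meet $B_0$; pick $b\in B$ with $b\subseteq\lambda\setminus B_0$. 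Then $\varepsilon*b$ is a finite partial function with domain inside $\lambda\setminus B_0$, so density provides $\alpha\in A$ with $f_\alpha\supseteq\varepsilon*b$, which is exactly what the matrix condition demands.

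For the converse, let $F\colon\kappa\times\lambda\to 2$ be an HFD matrix and $X=\{f_\alpha:\alpha\in\kappa\}$. Given $A\in[X]^\w$, choose $A'\in[\kappa]^\w$ with $\{f_\alpha:\alpha\in A'\}=A$, and for each $r\in\w$ and $\varepsilon\in 2^{r}$ put
$$\mathcal E_{r,\varepsilon}=\{\,b\in[\lambda]^{r}:\ \forall\alpha\in A'\ (f_\alpha\not\supseteq\varepsilon*b)\,\}.$$
If some $\mathcal E_{r,\varepsilon}$ contained $\w_1$ pairwise disjoint sets, this subfamily would be an element of $\mathcal D_{\w_1}(\lambda)$ contradicting the matrix condition applied to $A'$ and $\varepsilon$; hence $\mathcal E_{r,\varepsilon}$ has no uncountable pairwise disjoint subfamily. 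Consequently a \emph{maximal} pairwise disjoint subfamily $\mathcal E_{r,\varepsilon}^{0}\subseteq\mathcal E_{r,\varepsilon}$ is countable, so its union $C_{r,\varepsilon}$ is a countable subset of $\lambda$ meeting every member of $\mathcal E_{r,\varepsilon}$. Set $B_0=\bigcup_{r\in\w,\ \varepsilon\in 2^{r}}C_{r,\varepsilon}$, a countable set. For any finite partial function $\sigma$ with $\mathrm{dom}(\sigma)\subseteq\lambda\setminus B_0$, writing $\sigma=\varepsilon*b$ with $b=\mathrm{dom}(\sigma)$ we get $b\cap C_{|b|,\varepsilon}=\emptyset$, so $b\notin\mathcal E_{|b|,\varepsilon}$, i.e.\ some $f_\alpha\in A$ extends $\sigma$; thus $A\uhr(\lambda\setminus B_0)$ is dense in $2^{\lambda\setminus B_0}$. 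Running the same construction for an arbitrary $A'\in[\kappa]^\w$ and noting that $2^{\lambda\setminus B_0}$ has no isolated points (its index set is infinite) shows in addition that $X$ is infinite, so $X$ is HFD.

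The bookkeeping to watch is the translation between basic clopen subsets of $2^{\lambda\setminus B_0}$ and the $\varepsilon*b$-notation, and keeping straight that the countable parameter $A$ in the definition of HFD must be matched with the countable index set $A'$ used in the matrix condition. Apart from that, the whole argument reduces to the fact that a family of finite sets of a fixed size with no uncountable pairwise disjoint subfamily is pinned down by a countable set (take a maximal pairwise disjoint subfamily and use its union), and I expect no serious obstacle beyond this.
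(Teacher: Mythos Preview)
Your proof is correct. Both directions are handled cleanly: the forward direction correctly uses that only countably many members of an element of $\mathcal D_{\omega_1}(\lambda)$ can meet a fixed countable set, and the converse direction rests on the right combinatorial kernel, namely that a family of $r$-sets with no uncountable disjoint subfamily is captured by the countable union of a maximal disjoint subfamily. The argument that $X$ is infinite is also fine: the restriction map to $2^{\lambda\setminus B_0}$ cannot increase cardinality, and a dense subset of an uncountable Hausdorff space cannot be finite.

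There is nothing to compare, however, because the paper does not prove this proposition at all; it simply attributes the result to Juh\'asz \cite{Juh} and moves on. Your write-up is exactly the sort of self-contained verification one would expect for this folklore equivalence.
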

In this case we say that $F$ \emph{represents} $X$. 
Following \cite{Juh}, for an HFD space $X \subset 2^\lambda$ and $A \in[X]^\omega$ we set
$${\J}(A)=\big\{I \in[\lambda]^\w: \forall \varepsilon \in \mathit{Fin}(I,2)
\: \big(|A \cap [\varepsilon]|=\omega \:\Rightarrow\: A \cap[\varepsilon] \text { is dense in } 2^{\lambda \backslash I}\big)\big\}.$$

\begin{proposition}\cite{Juh}
	If $X \subset 2^\lambda$ is HFD and $A \in[X]^\omega$ then $\mathcal{J}(A)$ is closed and unbounded in $[\lambda]^\omega$.
\end{proposition}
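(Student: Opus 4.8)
The plan is to verify directly the two clauses in the definition of a club subset of $[\lambda]^\w$: that $\J(A)$ is closed under unions of $\subseteq$-increasing $\w$-chains, and that it is cofinal in $([\lambda]^\w,\subseteq)$. The single elementary fact that will be used in both halves is that \emph{projecting onto a smaller set of coordinates preserves density}: if $K\subseteq K'\subseteq\lambda$ and $D\subseteq 2^{K'}$ is dense, then $\{g\uhr K:g\in D\}$ is dense in $2^K$, since a basic clopen set of $2^K$ is given by a finite partial function on $K\subseteq K'$, hence also names a basic clopen set of $2^{K'}$, which $D$ meets. Observe that for $I\subseteq I'$ and $\varepsilon\in\mathit{Fin}(I,2)$ we have $(A\cap[\varepsilon])\uhr(\lambda\sm I')=\{f\uhr(\lambda\sm I'):f\in A,\,\varepsilon\subseteq f\}$, which is precisely the projection of $(A\cap[\varepsilon])\uhr(\lambda\sm I)$ to the coordinates in $\lambda\sm I'$.

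For closedness, suppose $I_0\subseteq I_1\subseteq\cdots$ all belong to $\J(A)$ and set $I=\bigcup_n I_n$. Given $\varepsilon\in\mathit{Fin}(I,2)$ with $|A\cap[\varepsilon]|=\w$, finiteness of $\mathrm{dom}(\varepsilon)$ yields $n$ with $\mathrm{dom}(\varepsilon)\subseteq I_n$; then $\varepsilon\in\mathit{Fin}(I_n,2)$, so $I_n\in\J(A)$ makes $(A\cap[\varepsilon])\uhr(\lambda\sm I_n)$ dense in $2^{\lambda\sm I_n}$, and applying the projection fact with $K=\lambda\sm I\subseteq K'=\lambda\sm I_n$ shows $(A\cap[\varepsilon])\uhr(\lambda\sm I)$ is dense in $2^{\lambda\sm I}$. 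Hence $I\in\J(A)$.

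For cofinality, given an arbitrary $I_0\in[\lambda]^\w$ I would build a $\subseteq$-increasing sequence $\la I_n:n\in\w\ra$ by the following bookkeeping. At stage $n$: for each $\varepsilon\in\mathit{Fin}(I_n,2)$ with $|A\cap[\varepsilon]|=\w$, the set $A\cap[\varepsilon]$ is a countably infinite subset of the HFD space $X$, so by Definition~\ref{def_7_1} there is $B_\varepsilon\in[\lambda]^\w$ with $(A\cap[\varepsilon])\uhr(\lambda\sm B_\varepsilon)$ dense in $2^{\lambda\sm B_\varepsilon}$; since $\mathit{Fin}(I_n,2)$ is countable, put $I_{n+1}=I_n\cup\bigcup\{B_\varepsilon:\varepsilon\in\mathit{Fin}(I_n,2),\,|A\cap[\varepsilon]|=\w\}\in[\lambda]^\w$. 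Let $I=\bigcup_n I_n\supseteq I_0$. If $\varepsilon\in\mathit{Fin}(I,2)$ has $|A\cap[\varepsilon]|=\w$, then $\mathrm{dom}(\varepsilon)\subseteq I_n$ for some $n$, so $B_\varepsilon$ was chosen at stage $n$ and $B_\varepsilon\subseteq I_{n+1}\subseteq I$, giving $\lambda\sm I\subseteq\lambda\sm B_\varepsilon$; projecting the dense set $(A\cap[\varepsilon])\uhr(\lambda\sm B_\varepsilon)$ onto $\lambda\sm I$ shows $(A\cap[\varepsilon])\uhr(\lambda\sm I)$ is dense in $2^{\lambda\sm I}$. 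Thus $I\in\J(A)$ and $I\supseteq I_0$, so $\J(A)$ is cofinal, and in particular nonempty.

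The only genuinely substantive point is the need to iterate: a single use of the HFD property cannot suffice, because adjoining the sets $B_\varepsilon$ to $I_n$ produces new finite conditions over the enlarged index set whose fibers in $A$ may again be infinite, and these must be dealt with in turn. So the construction must run for $\w$ steps and then be closed off at the limit; the closedness half of the proof is precisely what guarantees this limit behaves correctly. Beyond organizing this recursion there is no real obstacle — everything else reduces to the one-line density-under-projection observation.
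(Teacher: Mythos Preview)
The paper does not supply its own proof of this proposition; it simply quotes the result from \cite{Juh}. Your argument is correct and is essentially the standard direct verification: closure under increasing $\w$-unions via the projection-preserves-density observation, and cofinality via an $\w$-step bookkeeping that at each stage absorbs the witnesses $B_\varepsilon$ coming from the HFD definition applied to each infinite fibre $A\cap[\varepsilon]$. One cosmetic remark: as written, the same $\varepsilon$ may be processed at several stages and could in principle receive different witnesses $B_\varepsilon$; either fix the choice of $B_\varepsilon$ once and for all, or note that it is harmless since any one of them lands inside $I$. This does not affect the validity of the proof.
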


\begin{proposition} \label{net_impl_fin}
	If $\NN$ is a countable network in a HFD space $X\subset 2^\lambda$, then
	$\NN\cap [X]^{<\w}$ is a network in $X$ as well, and hence $X$ is countable.
\end{proposition}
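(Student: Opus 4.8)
The plan is to argue by contradiction. Suppose $\NN\cap[X]^{<\w}$ is not a network for $X$. Then there are a point $x\in X$ and a basic clopen neighbourhood $[\varepsilon]$ of $x$ (so $\varepsilon\in\mathit{Fin}(\lambda,2)$ and $\varepsilon=x\uhr\mathrm{dom}(\varepsilon)$) such that \emph{no finite} $N\in\NN$ satisfies $x\in N\subseteq[\varepsilon]$. Let $\{N_k:k\in\w\}$ list all the infinite members of $\NN$ that contain $x$, and for each $k$ fix a countably infinite $A_k\subseteq N_k$, so $A_k\in[X]^\w$. Since each $\J(A_k)$ is club in $[\lambda]^\w$ by the quoted result of \cite{Juh}, and a countable intersection of clubs in $[\lambda]^\w$ is club, I will choose $I\in[\lambda]^\w$ with $\mathrm{dom}(\varepsilon)\subseteq I$ and $I\in\bigcap_k\J(A_k)$.

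The key step is the following claim: whenever $N_k\subseteq[x\uhr F]$ for a \emph{finite} $F\subseteq\lambda$, necessarily $F\subseteq I$. To see this, split $F=F_1\sqcup F_2$ with $F_1=F\cap I$ and $F_2=F\setminus I$. Then $A_k\subseteq[x\uhr F]\subseteq[x\uhr F_1]$ with $x\uhr F_1\in\mathit{Fin}(I,2)$, so $A_k\cap[x\uhr F_1]=A_k$ is infinite; since $I\in\J(A_k)$, the projection of $A_k$ to $2^{\lambda\setminus I}$ is dense. But every element of $A_k$ agrees with $x$ on $F_2\subseteq\lambda\setminus I$, so if $F_2\neq\emptyset$, the element of $2^{F_2}$ obtained by flipping $x\uhr F_2$ cannot be approximated, contradicting density. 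Hence $F_2=\emptyset$, i.e. $F\subseteq I$.

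Now I combine the claim with the choice of $[\varepsilon]$. If every finite $F$ with $\mathrm{dom}(\varepsilon)\subseteq F$ satisfied $N_k\subseteq[x\uhr F]$ for some $k$, then by the claim every such $F$ would satisfy $F\subseteq I$; but $\lambda$ is uncountable, so picking $\gamma\in\lambda\setminus I$ and $F=\mathrm{dom}(\varepsilon)\cup\{\gamma\}$ yields a contradiction. Therefore there is a finite $F\supseteq\mathrm{dom}(\varepsilon)$ with $N_k\not\subseteq[x\uhr F]$ for all $k$. Put $V=[x\uhr F]$; then $x\in V\subseteq[\varepsilon]$, so by the network property of $\NN$ there is $N\in\NN$ with $x\in N\subseteq V\subseteq[\varepsilon]$. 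If $N$ is finite this contradicts the choice of $[\varepsilon]$; if $N$ is infinite then $N=N_k$ for some $k$ with $N_k\subseteq V$, again a contradiction. This shows $\NN\cap[X]^{<\w}$ is a network for $X$; since a network covers the space, $X=\bigcup\big(\NN\cap[X]^{<\w}\big)$ is a countable union of finite sets, hence $X$ is countable.

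I expect the only real subtlety to lie in the key step: doing the $F_1,F_2$ bookkeeping correctly and applying the definition of $\J(A_k)$ to the right finite condition $x\uhr F_1\in\mathit{Fin}(I,2)$, together with arranging a single $I$ that simultaneously lies in all the $\J(A_k)$ and contains $\mathrm{dom}(\varepsilon)$ — both routine once one recalls that the clubs in $[\lambda]^\w$ are closed under countable intersections and under the requirement of containing a fixed finite set.
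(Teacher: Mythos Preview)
Your proof is correct and follows essentially the same approach as the paper's: both pick a countable set $I$ (the paper writes $J$) in the intersection of the relevant clubs $\J(C(A))$, and then use a single coordinate $\gamma\in\lambda\setminus I$ (the paper's $\beta$) to witness that no infinite element of $\NN$ can sit inside the corresponding basic neighbourhood of $x$. The paper's argument is direct rather than by contradiction and works with a specific two-coordinate neighbourhood $[\epsilon_2]=[x\uhr\{\alpha,\beta\}]$ instead of your more general claim about arbitrary finite $F$, but the underlying mechanism is identical.
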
 
\begin{proof}
	Set $\A=\NN\cap [X]^{\geq\w}$, i.e., $\A$ is a family of all infinite elements of
	$\NN$. For every $A\in\A$ fix a countable infinite $C(A)\subset A$ and pick
	$J\in\bigcap_{A\in\A}\J(C(A))$.
Fix  any $\alpha\in J$, $\beta\in\lambda\setminus J$,
	$x\in X$ and set  $\epsilon_0=\{\la\alpha,x(\alpha)\ra\} \in  \mathit{Fin}(J,2)$,
	$\epsilon_1=\{\la\alpha,x(\alpha)\ra, \la\beta , 1- x(\beta)\ra \}\in
 \mathit{Fin}(\lambda,2)$, and
	$\epsilon_2=\{\la\alpha,x(\alpha)\ra, \la\beta , x(\beta)\ra \}\in\mathit{Fin}(\lambda,2)$.
	
	We claim that $C(A)\not\subset [\epsilon_2]$ for any $A\in\A$, which would clearly imply that
	$\NN\cap [X]^{<\w}$ must be a network for $X$. Fix $A\in\A$. If $C(A)\not\subset [\epsilon_0]$ there is nothing to prove,
	so assume that $C(A)\subset [\epsilon_0]$. But then $C(A)\cap [\epsilon_1]\neq\emptyset$
	because $J\in\J(C(A))$, and hence
	$C(A)\not\subset [\epsilon_2]$.
\end{proof}

Scheepers \cite{Sch2} proved that any HFD is R-separable. Now we show that the following stronger version of HFD spaces introduced in \cite{SSS} implies R-{\it nw}-selectivity.

\begin{definition}\label{verystrongHFD}
	A set $X \subset 2^{\omega_1}$ is a \emph{very strong HFD} if for each sequence $\left\{A_n: n \in \omega\right\}$ of pairwise disjoint, non-empty finite subsets of $X$ there is $\beta<\omega_1$ such that for all $s \in \mathit{Fin}(\omega_1 \backslash \beta, 2)$ there are infinitely many $n$ with $A_n \subset[s]$.
\end{definition}

Obviously, we get an equivalent notion if we require in the definition above only that
$\{A_n:n\in\omega\}\cap [[s]]^{<\omega} \not=\emptyset$.

Recall that a dense set $D \subseteq X$ is called \emph{groupable} if it admits a partition ${\A} =\{ A_n: n<\omega\}$ into finite sets such that every non-empty open subset of $X$ meets all but finitely elements of ${\A}$. Every very strong HFD space cannot contain a groupable dense subset \cite{SSS}. On the other hand, every H-separable space has a groupable dense subset. Therefore a very strong HFD space cannot be H-separable (hence not H-{\it nw}-selective).

\begin{theorem} \label{strongHFD:th}
	Every countable very strong HFD  space $X$ is R-{\it nw}-selective.
\end{theorem}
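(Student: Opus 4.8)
I would deduce this from the local form of R-{\it nw}-selectivity together with a reflection argument that squeezes a single countable ordinal out of Definition~\ref{verystrongHFD}. Since $|X|=\w$, Lemma~\ref{lem:loc_char}(1) reduces the task to showing that $X$ is locally R-{\it nw}-selective. So fix $x\in X$ and a sequence $\la\NN_n:n\in\w\ra$ of countable networks for $X$ at $x$; extending each $\NN_n$ to a global countable network (by adjoining the sets $M\setminus\{x\}$, $M$ ranging over a fixed countable network of $X$) and then applying Proposition~\ref{net_impl_fin}, we may assume each $\NN_n=\{N^n_m:m\in\w\}$ consists of finite subsets of $X$, every one of which contains $x$. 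Writing basic clopen neighbourhoods of $x$ as $[x\uhr F]$ for $F\in[\w_1]^{<\w}$, the goal becomes: choose $m(n)\in\w$ so that for every $F\in[\w_1]^{<\w}$ there is an $n$ with $N^n_{m(n)}\subseteq[x\uhr F]$ (note that $x\in N^n_{m(n)}$ is automatic). Since each $\NN_n$ is a network at $x$, for every $n$ and every $F$ there are infinitely many $m$ with $N^n_m\subseteq[x\uhr F]$; I would first thin each $\NN_n$ to such a cofinal subfamily whose members, after deletion of $x$, are pairwise disjoint over all $n$ and $m$, and write $A^n_m:=N^n_m\setminus\{x\}$ for these finite sets (discarding beforehand the immediate case in which $\{x\}\in\NN_n$ for infinitely many $n$).

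Next, enumerating $\{A^n_m\}$ as a single sequence of pairwise disjoint non-empty finite subsets of $X$, and feeding into Definition~\ref{verystrongHFD}, for each $F_0\in[\w_1]^{<\w}$, the subsequence of those $A^n_m$ that are contained in $[x\uhr F_0]$, one obtains --- iterating $\w$ times, since finite sets are absorbed at limits --- a countable ordinal $\beta$ that is \emph{good}: for every $F_0\in[\beta]^{<\w}$ and every $s\in\mathit{Fin}(\w_1\setminus\beta,2)$ there are infinitely many pairs $(n,m)$ with $N^n_m\subseteq[x\uhr F_0]\cap[s]$; moreover the set of good ordinals is closed unbounded in $\w_1$. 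Once $\beta$ is good, every $F\in[\w_1]^{<\w}$ splits as $F=(F\cap\beta)\cup(F\setminus\beta)$, so $[x\uhr F]=[x\uhr(F\cap\beta)]\cap[x\uhr(F\setminus\beta)]$ already contains infinitely many network elements. This collapses the uncountably many neighbourhood requirements to the countably many ones whose support lies inside $\beta$, and those I would discharge by a recursion that builds the selection $m(n)$ --- and, if necessary, the ordinal $\beta$ --- simultaneously: at each stage one more requirement of the form ``some $N^n_{m(n)}\subseteq[x\uhr F_0]$'' with $F_0\in[\beta]^{<\w}$ is met, while an infinite reservoir of indices, kept free at every stage, is used to serve the directions past $\beta$ thanks to the ``infinitely many witnesses'' clause of goodness (equivalently, the remark following Definition~\ref{verystrongHFD}).

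The crux is the two places where ``many'' has to be turned into ``one selected element per $\NN_n$''. First, Definition~\ref{verystrongHFD} demands \emph{pairwise disjoint} finite sets, whereas all network elements share $x$; deleting $x$ repairs this, but one must check that a subfamily of each $\NN_n$ can be kept cofinal at $x$ \emph{and} made pairwise disjoint at the same time --- which uses, just as in the proof of Proposition~\ref{net_impl_fin}, that in an HFD space network elements can always be shrunk toward $x$. Second, and more delicate, a single choice $m(n)$ has to serve \emph{all} of the (uncountably many) basic neighbourhoods of $x$ at once; this is why the controlling ordinal $\beta$ must be produced by the reflection/club argument rather than fixed in advance, and why the scheduling has to exploit the multiplicity in Definition~\ref{verystrongHFD}. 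Arranging the recursion so that a commitment made for one direction is never destroyed by a later one is the technical heart of the proof; the same mechanism lies behind Scheepers' result that every HFD space is R-separable, of which the present theorem is the ``network'' strengthening.
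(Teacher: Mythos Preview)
Your strategy matches the paper's in outline: use Proposition~\ref{net_impl_fin} to reduce to finite network elements, find a countable ordinal $\beta$ below which only countably many basic-neighbourhood requirements live, and let the very strong HFD property handle the coordinates $\geq\beta$. The reduction to local R-{\it nw}-selectivity via Lemma~\ref{lem:loc_char} is legitimate and slightly streamlines the bookkeeping (the paper instead carries a third index $k$ ranging over points).

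There is, however, a real gap at the step you yourself flag as ``the technical heart''. Your ``goodness'' of $\beta$ asserts that for every $F_0\in[\beta]^{<\w}$ and $s\in\mathit{Fin}(\w_1\setminus\beta,2)$ there are infinitely many \emph{pairs} $(n,m)$ with $N^n_m\subseteq[x\uhr F_0]\cap[s]$; but this says nothing about any particular selection $n\mapsto m(n)$. After committing the $m(n)$'s so that each $F_0\in[\beta]^{<\w}$ is hit by infinitely many $N^n_{m(n)}$, you would still need to apply Definition~\ref{verystrongHFD} \emph{to the selected subsequence} $\{A^n_{m(n)}:N^n_{m(n)}\subseteq[x\uhr F_0]\}$ in order to catch $[x\uhr(F\setminus\beta)]$, and the resulting bound has no reason to be $\leq\beta$. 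Your phrase ``builds the selection $m(n)$ --- and, if necessary, the ordinal $\beta$ --- simultaneously'' names the circularity without resolving it; a naive closing-off is awkward because the partition of $\w$ into blocks indexed by $[\beta]^{<\w}$ itself depends on $\beta$. The paper breaks the loop by taking a countable elementary submodel $M\prec H_\vartheta$ containing $X$ and the networks, setting $\beta=M\cap\w_1$, and --- crucially --- first re-indexing the given networks as a triple-indexed family $\la\NN_{n,m,k}\ra$, so that for each target pair $(s_n,x^n_k)$ with $s_n\in\mathit{Fin}(\beta,2)$ an entire $\w$-sequence of networks (over $m$) is available. The inductive choice $m\mapsto N_{n,m,k}$ for fixed $(n,k)$ is definable from parameters in $M$, hence lies in $M$; applying the very strong HFD property to \emph{that} selected sequence then gives a bound $\beta_n\in M$, so $\beta_n<\beta$ automatically. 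This device --- making the selection inside $M$ so that its HFD bound is captured by $\beta$ --- is the missing ingredient in your sketch.
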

\begin{proof} 
	Let  $({\NN}_{n, m,k}: n, m,k \in \omega)$ be an enumeration of countably many  networks of  $X$. By Proposition~\ref{net_impl_fin} there is no loss of generality in assuming that 
 each $\NN_{n,m,k}$ consists of finite subsets of $X$.

  Let $\vartheta$ be a large enough regular cardinal and  $M$  a countable elementary submodel of $\mathcal{H}_{\vartheta}$ which contains $X$ and $(\NN_{n, m,k}: n, m,k \in \omega)$. Set $\beta=M \cap \omega_1$ and enumerate $\mathit{Fin}(\beta, 2)$ as $\left\{s_n: n \in \omega\right\}$. 
  Let $L=\{n\in\w:[s_n]\cap X\neq\emptyset\}$ and
  for all $n \in L$ fix a (non-necessary injective) enumeration  $\{x^n_k:k\in\omega\}$ of $[s_n]\cap X$.
  Given $n\in L$ and $k\in\w$, 
  by induction on $m\in\w$  choose $N_{n, m,k} \in [[s_n]]^{<\omega} \cap \NN_{n, m,k}$ such that 
  \begin{itemize}
 \item   $x^n_k\in N_{n,m,k}$,
 \item $(N_{n,m_0,k}\setminus\{x^n_k\})\cap (N_{n,m_1,k}\setminus\{x^n_k\})=\emptyset $
 for any $m_0<m_1$,   and
 \item  the function $m \mapsto N_{n, m, k}$ is in $M$.
	\end{itemize}
 We claim that $\{N_{n, m,k}: n\in L, m,k\in\omega\}$ is a network in $X$. Indeed, let $s \in Fin(\omega_1, 2)$, $x\in [s]$, and note that $s \restriction_\beta=s_n$ for some $n$. Thus $n\in L$. 
 Fix $k\in\omega$ such that $x=x^n_k$. The sequence 
 $$(N_{n, m,k}\setminus\{x^n_k\}: m \in \omega)$$
 lies in $M$ and consists of mutually disjoint finite subsets of $X$,
 so there is $\beta_n \in M$ such that 
 $$[[t]]^{<\omega} \cap \{N_{n, m,k}\setminus\{x^n_k\}: m \in \omega\} $$
 is infinite
  for all $t \in \mathit{Fin}(\omega_1 \backslash \beta_n, 2)$ with $x=x^n_k\in [t]$. This is a direct consequence of  $X$ being a  very strong HFD.
It follows  that 
  $$[[t]]^{<\omega} \cap \{N_{n, m,k}: m \in \omega\} $$
 is infinite
  for all $t \in \mathit{Fin}(\omega_1 \backslash \beta_n, 2)$ with $x=x^n_k\in [t]$. 
    Note that $s \backslash s_n \in \mathit{Fin}(\omega_1 \backslash \beta, 2) \subset \mathit{Fin}(\omega_1 \backslash \beta_n, 2)$ hence $[[s \backslash s_n]]^{<\omega} \cap \{N_{n, m,k}: m \in \omega\} \neq \emptyset$. Since $ \{N_{n, m,k}: m \in \omega\} \subset[[s_n]]^{<\omega}$ we have that $[[s]]^{<\omega} \cap \{N_{n, m,k}: m \in \omega\} \neq \emptyset$ as well.	
This completes our proof.
\end{proof}

Theorem~\ref{strongHFD:th} motivates the following
\begin{question}
 Is every countable HFD space \Rnw?   
\end{question}

\bigskip
{\bf Acknowledgement:} The first two authors in alphabetical order would like to thank the \lq\lq National Group for Algebric and Geometric Structures, and their Applications\rq\rq (GNSAGA-INdAM) for their invaluable support throughout the course of this research.
The research of the fourth author was funded in whole by the Austrian Science Fund (FWF) [I 5930].

%%%%%%%%%%%%%%%%%%%%%%

\end{document}